\def\acts{\ \rotatebox[origin=c]{-90}{$\circlearrowright$}\ }
\def\racts{\ \rotatebox[origin=c]{90}{$\circlearrowleft$}\ }
\theoremstyle{plain}
    \newtheorem{thm}{Theorem}[section]
     \newtheorem{conjecture}[thm]{Conjecture}
    \newtheorem{example}[thm]{Example}
    \newtheorem{proposition}[thm]{Proposition}
    \newtheorem{question}[thm]{Question}
    \newtheorem{theorem}[thm]{Theorem}
\theoremstyle{definition}
    \newtheorem{definition}[thm]{Definition}
    \newtheorem*{notation*}{Notation and Terminology}
    \newtheorem{remark}[thm]{Remark}
\theoremstyle{remark}
    \newtheorem{setup}[thm]{}
\newcommand{\C}{\mathbb{C}}
\newcommand{\Q}{\mathbb{Q}}
\newcommand{\R}{\mathbb{R}}
\newcommand{\Z}{\mathbb{Z}}
\newcommand{\alb}{\operatorname{alb}}
\newcommand{\Gal}{\operatorname{Gal}}
\newcommand{\id}{\operatorname{id}}
\newcommand{\Ker}{\operatorname{Ker}}
\newcommand{\NE}{\overline{\operatorname{NE}}}
\newcommand{\Nef}{\operatorname{Nef}}
\newcommand{\NS}{\operatorname{NS}}
\newcommand{\PE}{\operatorname{PE}}
\newcommand{\Per}{\operatorname{Per}}
\newcommand{\SEnd}{\operatorname{SEnd}}
\newcommand{\Supp}{\operatorname{Supp}}
\newcommand{\variety}{\operatorname{variety}}
\newcommand{\N}{\operatorname{N}}
\newcommand{\Pic}{\operatorname{Pic}}
\newcommand{\bk}{\mathbf{k}}
\begin{document}

\title[Equivariant minimal model program]
{Advances in the equivariant minimal model program and their applications in complex and arithmetic dynamics}

\author{Sheng Meng, De-Qi Zhang}

\address{
    \textsc{School of Mathematical Sciences, Ministry of Education Key Laboratory of Mathematics and Engineering Applications \& 
    Shanghai Key Laboratory of PMMP}\endgraf
    \textsc{East China Normal University, Shanghai 200241, China}\endgraf
}
\email{smeng@math.ecnu.edu.cn}

\address
{
\textsc{Department of Mathematics} \endgraf
\textsc{National University of Singapore,
Singapore 119076, Republic of Singapore
}}
\email{matzdq@nus.edu.sg}

\begin{abstract}
This note reports some advances in the Equivariant Minimal Model Program (EMMP) for non-isomorphic surjective endomorphisms
and their applications in complex and arithmetic dynamics.
\end{abstract}

\subjclass[2020]{
37P55, 
14E30,   
08A35.  
}

\keywords{Equivariant Minimal Model Program, Int-amplified endomorphism, Quasi-amplified endomorphism, Kawaguchi-Silverman conjecture, 
Arithmetic degree, Dynamical degree, Zariski dense orbit conjecture, Log Calabi-Yau variety, Toric variety}

\maketitle
\tableofcontents

\section{Introduction}

We work over an algebraically closed field of characteristic $0$ unless otherwise specified.
A fair amount is known in any characteristic and we refer to \cite{CMZ20} and its references for more information.

Consider a finite sequence of dominant rational maps of projective varieties
$$(*): \hskip 1pc X_1\dasharrow X_2\dasharrow\cdots\dasharrow X_r.$$
Let $f:X_1\to X_1$ be a surjective endomorphism.
We say the sequence $(*)$ is {\it $f$-equivariant} if there exists the following commutative  diagram 
$$\xymatrix{
X_1\ar@{-->}[r]\ar[d]^{f_1} &X_2\ar@{-->}[r]\ar[d]^{f_2} &\cdots\ar@{-->}[r] &X_r\ar[d]^{f_r}\\
X_1\ar@{-->}[r] &X_2\ar@{-->}[r] &\cdots\ar@{-->}[r] &X_r\\
}
$$
where $f_1=f$ and all the $f_i$ are surjective endomorphisms.
Let $S$ be a set of surjective endomorphisms of $X$.
We say the sequence $(*)$ is {\it $S$-equivariant} if it is $g$-equivariant for every $g\in S$.

The first part of this paper focuses on the Equivariancy of the Minimal Model Program (MMP), i.e.,
to construct an Equivariant Minimal Model Program (EMMP).
Late on, applications will be given to several questions and conjectures in complex and arithmetic dynamics.
We will outline the ideas while referring to the original papers for detailed proofs.

\par \vskip 1pc
{\bf Acknowledgement.}
The authors would like to thank Jason Bell, Yohsuke Matsuzawa, Matthew Satriano, Joe Silverman, Burt Totaro, Long Wang and Junyi Xie for the valuable comments on this note.
The first author is supported by Science and Technology Commission of Shanghai Municipality (No. 22DZ2229014) 
and a National Natural Science Fund, and the second author
is supported by ARF: A-8000020-00-00 of NUS. 
Many thanks to CIRM Luminy, IMS at NUS, MFO at Obwerwolfach, NCTS at Taipei, and Simons Foundation for the kind support.

\section{The Minimal Model Program (MMP)}

In this section, we briefly introduce the classical Minimal Model Program (MMP).
We begin with some terminology.

\begin{setup}
{\rm

Let $X$ be a projective variety of dimension $n$.
Denote by $\Pic(X)$ the group of Cartier divisors modulo linear equivalence and $\Pic^0(X)$ the subgroup of the classes in $\Pic(X)$ which are algebraically equivalent to $0$.
Let $\NS(X)=\Pic(X)/\Pic^0(X)$ be the {\it N\'eron-Severi group}.
Denote by $\N^1(X):=\NS(X) \otimes_{\Z} \mathbb{R}$.
Let $Z$ and $Z'$ be two $r$-cycles with real coefficients.
$Z$ is said to be {\it numerically equivalent} to $Z'$ (denoted by $Z\equiv Z'$) if $(Z-Z')\cdot H_1\cdots H_r=0$ for any Cartier divisors $H_1,\cdots, H_r$;
it was called weakly numerically equivalent in our earlier papers and is weaker than the numerical equivalence defined in Fulton's book (cf.~\cite[Definition 19.1]{Ful98}).
For Cartier divisors (regarded as $(\dim X -1)$-cycles), the numerical equivalence here coincides with the usual one; see \cite[Lemma 3.2]{Zha16}.
Denote by $\N_r(X)$ the quotient vector space of $r$-cycles with real coefficients modulo the numerical equivalence. 


We refer to \cite[Chapter II]{KM98} and \cite{Fuj15} for some basics of singularities of a variety or a pair, like, {\it lc (log canonical), klt (Kawamata log terminal), 
canonical, or terminal singularity}.
A pair $(X, \Delta)$ is a normal variety $X$ with a $\Q$-Cartier (boundary) divisor $\Delta$ such that all coefficients of $\Delta$ are in $[0, 1] \cap \Q$ and
the adjoint divisor $K_X + \Delta$ is $\Q$-Cartier.

Let $(X, \Delta)$ be a pair with only lc singularities.
Let $\pi:X \to S$ be a projective morphism to a variety $S$.
We use $\N_1(X/S)$ to denote the space of $1$-cycles generated by curves in fibres of $\pi$, modulo numerical equivalence.
$\NE(X/S)$ denotes the closure of the effective $1$-cycles in $\N_1(X/S)$.
The classical {\it cone theorem} asserts that
$$\NE(X/S)=\NE(X/S)_{(K_X+\Delta) \ge 0}+\sum_i \mathbb{R}_{\ge 0}[C_i]$$
where each $\mathbb{R}_{\ge 0}[C_i]$ is an {\it extremal ray} of $\NE(X/S)$, generated by a rational curve $C_i$ 
with $(K_X+\Delta)\cdot C_i<0$.
Moreover, for each $C_i$, the base point free theorem ensures a contraction morphism, with connected fibres, 
$\tau: X \to Y$ over $S$ to a normal projective variety $Y$, called 
{\it the contraction of the $(K_X + \Delta)$-negative extremal ray} $\mathbb{R}_{\ge 0}[C_i]$,
such that exactly the curves in the class of $\mathbb{R}_{\ge 0}[C_i]$ are contracted to points and one obtains the exact sequence
$$0\to \N^1(Y)\xrightarrow[]{\tau^*} \N^1(X)\xrightarrow[]{\cdot C_i} \mathbb{R}\to 0.$$
In particular, we have the relation ($\dim_{\R} \N^1(X) =$) $ \rho(X)=\rho(Y)+1$ between the {\it Picard numbers}.

\par \vskip 1pc
Assume further that $(X, \Delta)$ is a $\Q$-factorial log canonical pair.
There are three cases for the contraction $\tau: X \to Y$ of a $(K_X + \Delta)$-negative extremal ray $\mathbb{R}_{\ge 0}[C]$.

\par \vskip 1pc \noindent
\textbf{Case 1 (Divisorial contraction).}
$\tau$ is a birational contraction with the exceptional locus (of points of $X$ along which $\tau$ is not an isomorphism) being a prime divisor.
Note that $Y$ is still $\Q$-factorial with log canonical singularities.

\par \vskip 1pc \noindent
\textbf{Case 2 (Flipping contraction).} 
$\tau$ is small, i.e., a birational contraction with the exceptional locus being of codimension $\ge 2$ in $X$.
In this case, $Y$ is not $\Q$-factorial and we need to take the flipping diagram:
$$\xymatrix{
X\ar[rd]_{\tau}\ar@{-->}[rr]^\phi&&X^+\ar[ld]^{\tau^+}\\
&Y
}
$$
where $\tau^+$ is a small projective birational morphism and 
\begin{enumerate}
\item $(X^+,\Delta^+)$ is a $\Q$-factorial log canonical pair with $\Delta^+=\phi_*\Delta$,
\item $K_{X^+}+\Delta^+$ is $\tau^+$-ample, and
\item $\rho(X^+)=\rho(Y)+1$.
\end{enumerate}
We refer to \cite{Bir12, HX13} for the existence of log canonical flips.

\par \vskip 1pc \noindent
\textbf{Case 3 (Fano contraction).} 
This is the case where $\dim Y<\dim X$ (and general fibres are of Fano type).
Note that $Y$ is again $\Q$-factorial with log canonical singularities, see \cite{Fuj15}.
}
\end{setup}

\begin{remark}
We refer to \cite{Fuj17} for the non-$\Q$-factorial Minimal Model Program.
\end{remark}

\begin{remark}
There are two very hard conjectures in the Minimal Model Program.
One is the termination of flips asserting that one can not have infinitely many flips when running a Minimal Model Program.
Another one is the abundance conjecture asserting that $K_X$ is semi-ample when it is nef.
In our Equivariant Minimal Model Program, we will overcome these two obstacles with the help of dynamics.
\end{remark}

We end this section by citing two useful results on the two conjectures.
Hashizume and Hu \cite{HH20} proved the special termination of directed flips for non-pseudo-effective pairs.
This is the only termination needed in our EMMP.

\begin{theorem}
Let $(X,\Delta)$ be a $\Q$-factorial log canonical pair over $S$ with $K_X+\Delta$ not being pseudo-effective over $S$.
Then there is a finite sequence of $(K_X+\Delta)$-MMP over $S$
$$(X,\Delta)=(X_2,\Delta_1)\dashrightarrow (X_2,\Delta_2)\dashrightarrow\cdots\dashrightarrow (X_n,\Delta_n)\to Y$$
where $X_i\dashrightarrow X_{i+1}$ is birational  and $(X_n, \Delta) \to Y$ is a Fano contraction.
\end{theorem}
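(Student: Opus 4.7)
The plan is to run a $(K_X+\Delta)$-MMP with scaling of a general ample divisor over $S$ and to use the hypothesis that $K_X+\Delta$ is not pseudo-effective over $S$ to force this MMP to terminate at a Fano contraction rather than at a minimal model.

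Concretely, I would first choose an ample $\mathbb{R}$-divisor $A$ on $X$ over $S$, small enough that $(X,\Delta+A)$ remains lc but large enough that $K_X+\Delta+A$ is nef over $S$. At the $i$-th stage of the MMP, with $(X_i,\Delta_i)$ the current pair and $A_i$ the strict transform of $A$, set
$$\lambda_i \,:=\, \inf\{\, t\geq 0 : K_{X_i}+\Delta_i+tA_i \text{ is nef over } S\,\}.$$
The cone theorem recalled in Section~2, applied over $S$, produces a $(K_{X_i}+\Delta_i)$-negative extremal ray $R_i$ that is $(K_{X_i}+\Delta_i+\lambda_iA_i)$-trivial whenever $\lambda_i>0$. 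If the associated contraction $\tau_i\colon X_i\to Y_i$ is of fiber type, we stop: this is the desired Fano contraction. Otherwise $\tau_i$ is divisorial or flipping; in the flipping case I replace $X_i$ by its log canonical flip, whose existence is ensured by Birkar \cite{Bir12} and Hacon--Xu \cite{HX13}. In both of the remaining cases $(X_{i+1},\Delta_{i+1})$ is again $\mathbb{Q}$-factorial lc, and by construction $\lambda_{i+1}\leq\lambda_i$.

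The main obstacle is termination, which I expect to split into two sub-cases. If $\lambda:=\lim_i\lambda_i>0$, then from some index on the procedure may be reinterpreted as an MMP for the lc pair $(X_i,\Delta_i+\lambda A_i)$ with big boundary, to which one applies a special-termination-with-scaling result in the lc setting, such as the theorem of Hashizume--Hu \cite{HH20}. If instead $\lambda=0$, I would argue by contradiction: for every $\epsilon>0$ and all sufficiently large $i$ the divisor $K_{X_i}+\Delta_i+\epsilon A_i$ is nef, hence pseudo-effective, over $S$; because each MMP step is $(K_X+\Delta)$-negative, pseudo-effectiveness of the strict transform pulls back along the MMP to pseudo-effectiveness of $K_X+\Delta+\epsilon A$ over $S$. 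Letting $\epsilon\to 0$ then yields that $K_X+\Delta$ is pseudo-effective over $S$, contradicting the hypothesis. Hence the MMP must have terminated earlier, and the only admissible terminal step is a Fano contraction, as claimed.
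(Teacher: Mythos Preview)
The paper does not prove this statement; it is quoted from Hashizume--Hu \cite{HH20} as an input to the EMMP, with no argument given. Your outline is the standard one (MMP with scaling of an ample divisor, splitting on whether the limiting threshold $\lambda$ is positive or zero) and is essentially how such results are established. Your Case~2 argument, ruling out $\lambda=0$ by pulling back pseudo-effectiveness of $K_{X_i}+\Delta_i+\epsilon A_i$ along the earlier $(K+\Delta+\epsilon A)$-nonpositive steps, is correct and is precisely where the non-pseudo-effectiveness hypothesis is used. In Case~1, however, you invoke \cite{HH20} itself for termination, which is circular since that is the reference for the very theorem being stated; the actual ingredients needed there are special termination for lc flips with scaling (so that eventually the flipping loci avoid the non-klt locus) together with termination of klt flips with scaling from \cite{BCHM10}. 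That combination is the substantive content behind the cited result, and your sketch correctly isolates it as the hard step while deferring to the literature.
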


Gongyo \cite{Gon13} proved the abundance conjecture for the numerically trivial log canonical pair.
\begin{theorem}\label{thm-gongyo}
Let $(X,\Delta)$ be a log canonical pair with $K_X+\Delta\equiv 0$.
Then $K_X+\Delta\sim_{\Q} 0$.
\end{theorem}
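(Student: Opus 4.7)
The plan is to follow Gongyo's strategy: reduce by induction on $\dim X$ to the klt case, and then invoke abundance for numerically trivial klt pairs.

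\textbf{Step 1 (Dlt modification).} I would first replace $(X,\Delta)$ by a $\Q$-factorial dlt modification $\pi\colon (X',\Delta')\to (X,\Delta)$, obtained by running a suitable MMP on a log resolution (Hacon--Xu). This yields a $\Q$-factorial dlt pair with $\pi_*(K_{X'}+\Delta')=K_X+\Delta$ and $K_{X'}+\Delta'\equiv 0$ (being the pullback of a numerically trivial class, after absorbing discrepancies). Since $\pi_*$ sends $\Q$-linear triviality down, it suffices to prove $K_{X'}+\Delta'\sim_{\Q} 0$.

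\textbf{Step 2 (Klt base case).} Suppose $\lfloor\Delta'\rfloor=0$, so $(X',\Delta')$ is klt. Non-vanishing (Ambro--Kawamata) in this numerically trivial klt setting gives $\kappa(K_{X'}+\Delta')\geq 0$, hence $m(K_{X'}+\Delta')\sim D$ for some effective divisor $D$ and some $m>0$. But $D\equiv 0$, and an effective $\Q$-Cartier divisor numerically equivalent to zero on a normal projective variety is $\Q$-linearly equivalent to zero (via the Nakayama--Zariski decomposition together with a standard intersection/ample divisor argument). Hence $K_{X'}+\Delta'\sim_{\Q} 0$.

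\textbf{Step 3 (Inductive step via adjunction).} If $S:=\lfloor\Delta'\rfloor\neq 0$, pick an irreducible component $S_0$ of $S$. Dlt adjunction gives $(K_{X'}+\Delta')|_{S_0}=K_{S_0}+\Delta_{S_0}$ with $(S_0,\Delta_{S_0})$ dlt and $K_{S_0}+\Delta_{S_0}\equiv 0$. Induction on dimension yields $K_{S_0}+\Delta_{S_0}\sim_{\Q} 0$. To propagate this linear triviality from $S_0$ to $X'$, I would appeal to the Fujino--Ambro injectivity/vanishing theorems for lc pairs, producing a surjection
\[
H^0(X', m(K_{X'}+\Delta')) \longrightarrow H^0(S_0, m(K_{S_0}+\Delta_{S_0}))
\]
for sufficiently divisible $m$. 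A nonzero section on $S_0$ then lifts to a global effective divisor on $X'$, reducing the problem to the mechanism of Step 2 (numerically trivial effective divisors are $\Q$-linearly trivial). When $S$ is reducible one has to assemble sections compatibly across the intersections of lc strata.

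\textbf{Main obstacle.} The difficulty is concentrated in Step 3, namely extending $\Q$-linear triviality across the non-klt locus. The klt non-vanishing used in Step 2 is itself a deep ingredient but is available off the shelf; by contrast, Step 3 requires the injectivity/vanishing machinery for quasi-log structures on lc centers, and the careful bookkeeping of how sections on different components of $\lfloor\Delta'\rfloor$ glue is where the delicate analysis lives. All other steps are then relatively formal consequences of MMP tools already available.
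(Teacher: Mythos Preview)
The paper does not give its own proof of this statement; it merely records it as a theorem of Gongyo \cite{Gon13} and uses it as a black box. Your outline is a reasonable high-level sketch of Gongyo's actual argument (dlt modification, klt base case via non-vanishing, and propagation across the lc boundary via Fujino-type injectivity), so in that sense it matches what the paper is citing rather than supplying an alternative.

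Two small comments on your sketch itself. In Step~2 the mechanism is simpler than you indicate: if $D\ge 0$ is effective and $D\equiv 0$, then $D\cdot H^{\dim X-1}=0$ for an ample $H$ forces $D=0$ outright, so $m(K_{X'}+\Delta')\sim 0$ directly; no Nakayama--Zariski decomposition is needed. In Step~3 the genuine subtlety, which you correctly flag as the main obstacle, is that the induction naturally passes through \emph{semi}-log canonical pairs (the union $\lfloor\Delta'\rfloor$ is typically non-normal), which is why Gongyo's paper is formulated in the slc category from the start; your remark about gluing sections across strata is exactly where this enters.
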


\section{Various types of endomorphisms and fundamental properties}

We begin with the following (cf. \cite{KR17, Men20, Men23, MZ20b}):

\begin{definition}  Let $f:X\to X$ be a surjective endomorphism of a projective variety $X$.
\begin{enumerate}
\item $f$ is {\it $q$-polarized}  if $f^{\ast}L \sim qL$ for some ample Cartier divisor $L$ and integer $q>1$.
\item $f$ is {\it amplified} (in the sense of Krieger and Reschke \cite{KR17}, which is also the assumption of Theorem \ref{FakThm} in Fakhruddin \cite{Fak03}) if $f^*L-L=H$ for some Cartier divisor $L$ and ample 
divisor $H$.
\item $f$ is {\it int-amplified} if $f^*L-L=H$ for some ample Cartier divisors $L$ and $H$, i.e., the operator $(f^* {-} \id)$ maps an {\bf int}erior point of the nef cone $\Nef(X)$
to another {\bf int}erior point of the same cone and is like an {\bf int}ernal mapping in the sense of Theorem \ref{main-thm-cri} (4), whence the naming of this notion.
\item $f$ is {\it quasi-amplified} if $f^*L-L=H$ for some Cartier divisor $L$ and big Cartier divisor $H$.
\item $f$ is {\it PCD} (a short form for Periodic  Countable Dense) if the set $\Per(f_K)$ of periodic points is countable and Zariski dense in $X_K$ for some uncountable algebraically closed field extension $K/k$.
\item $f$ is of {\it positive entropy} if the spectral radius of $f^*|_{\N^1(X)}$ on the Neron-Severi real space $\N^1(X)=\NS(X)\otimes_{\Z}\R$ is greater than $1$ (see e.g. \cite{DS04}).
\item $f$ is of {\it null entropy} if $f$ is not of positive entropy.
\item $f$ is {\it $p$-cohomologically hyperbolic} if $\delta_p(f)>\delta_i(f)$ for any $i\in \{1,2,\cdots,\dim X\}\backslash\{p\}$, where $\delta_i(f)= \lim\limits_{n \to +\infty} ((f^n)^*H^i \cdot H^{\dim (X) -i})^{1/n}$ for some (and hence any) nef and big divisor $H$ (cf.~\cite{Dan20}).
\end{enumerate}
\end{definition}

A criterion for polarized endomorphisms is given by \cite[Propositions 1.1 and 2.9]{MZ18}.

\begin{proposition}
Let $f:X\to X$ be a surjective endomorphism of a projective variety $X$ and $q>0$ a rational number.
Assume one of the following conditions.
\begin{enumerate}
\item  $f^\ast H \equiv qH$
for some big $\mathbb{R}$-Cartier divisor $H$.
\item  $X$ is normal and $f^\ast H\equiv qH$ for some big Weil $\mathbb{R}$-divisor $H$.
\item  $f^*|_{\NS_{\C}(X)}$ is diagonalizable with all the eigenvalues being of modulus $q$.
\end{enumerate}
Then $q$ is an integer and $f^\ast A\equiv qA$ for some ample Cartier
divisor $A$.
Further, if $q>1$, then $f$ is polarized.
\end{proposition}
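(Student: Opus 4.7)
The plan is to prove case (1) first; cases (2) and (3) reduce to (1). For (3), Kronecker's theorem identifies the unit-circle factors of the eigenvalues of $f^{*}/q$ as roots of unity (they are algebraic integers of modulus $1$), so for some $m \geq 1$ the iterate $f^{m}$ satisfies $(f^{m})^{*} = q^{m}\id$ on $\NS_{\C}(X)$; then $(f^{m})^{*}A_{0} \equiv q^{m}A_{0}$ for any ample Cartier $A_{0}$, putting $f^{m}$ in case (1). Integrality of $q$ follows from $q^{m} \in \Z$ and $q \in \Q_{>0}$; an ample eigenvector descends from $f^{m}$ to $f$ by decomposing $V_{q^{m}}^{(f^{m})^{*}} = \bigoplus_{\zeta^{m} = 1} V_{q\zeta}^{f^{*}}$ and selecting the $\zeta = 1$ component. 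For (2), a big Weil $\R$-divisor on a normal variety can be replaced, up to numerical equivalence in the $q$-eigenspace, by a big $\R$-Cartier divisor via a $\Q$-factorialization and pullback-pushforward, reducing to (1).

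For case (1), the first step is to show $q = \rho(f^{*}|_{\N^{1}(X)_{\R}})$. The pseudo-effective cone $\PE(X)$ is closed, salient, $f^{*}$-invariant (as $f$ is finite surjective), with non-empty interior equal to the big cone, containing $H$. Perron-Frobenius for cone-preserving operators (Birkhoff--Vandergraft/Krein--Rutman) forces any eigenvector in the interior of such a cone to have eigenvalue equal to the spectral radius, and makes the $q$-generalized eigenspace equal to $V_{q} := \ker(f^{*} - q\id)$. Integrality is then immediate: $f^{*}$ preserves the free abelian group $\NS(X)/\torsion$, so $f^{*}|_{\N^{1}(X)_{\R}}$ is represented by an integer matrix, making $q$ an algebraic integer; combined with $q \in \Q_{>0}$, this yields $q \in \Z_{>0}$. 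Alternatively, $(f^{*}H)^{n} \equiv q^{n}H^{n}$ and $(f^{*}H)^{n} = \deg(f)\,H^{n}$, with $H^{n} > 0$ from Kodaira's lemma (write $H \equiv A' + E$ with $A'$ ample and $E$ effective), give $q^{n} = \deg(f) \in \Z_{>0}$ directly.

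The core step produces an ample Cartier $A$ with $f^{*}A \equiv qA$. Perron-Frobenius applied to the $f^{*}$-invariant nef cone $\Nef(X)$ yields a nef $L \in V_{q}$. To show $L$ is actually ample, I would analyze for an ample Cartier $A_{0}$ the sequence $(f^{k})^{*}A_{0}/q^{k}$ (after passing to an iterate, so that $q$ is the unique eigenvalue of maximum modulus, or by taking Cesaro means); the limit $L_{\infty}$ is nef and lies in $V_{q}$, and by continuity of intersection together with the projection formula $((f^{k})^{*}A_{0})^{n} = \deg(f^{k})\,A_{0}^{n} = q^{kn} A_{0}^{n}$ we obtain $L_{\infty}^{n} = A_{0}^{n} > 0$, so $L_{\infty}$ is nef and big. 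Upgrading nef-and-big to ample via Kleiman's criterion: any null curve $C$ of $L_{\infty}$ generates an $f_{*}$-invariant extremal ray of $\NE(X)$ (since $f^{*}L_{\infty} \equiv qL_{\infty}$ forces $L_{\infty} \cdot f_{*}C = 0$), leading to an $f$-equivariant Mori contraction whose existence contradicts the bigness of $H \in V_{q}$ (an interior-of-$\PE$ class in the $q$-eigenspace cannot coexist with a non-trivial contracted subspace sitting in $V_{q}$). Rationality of $V_{q}$ over $\Q$ (since $q \in \Z$) then permits clearing denominators to pick an ample Cartier representative.

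Finally, if $q > 1$, I promote $f^{*}A \equiv qA$ to $f^{*}A \sim qA$: the group $\Pic^{0}(X)$ is divisible, and $f^{*} - q\id$ is surjective on it (the eigenvalues of $f^{*}|_{\Pic^{0} \otimes \Q}$ have modulus $\sqrt{q} \neq q$ when $q > 1$), so we may solve $(f^{*} - q\id)D = f^{*}A - qA$ in $\Pic^{0}(X)$ and replace $A$ by $A - D$, which stays ample and satisfies $f^{*}A \sim qA$, so $f$ is polarized. The main obstacle is the promotion of the nef-and-big eigenvector $L_{\infty}$ to an ample one in case (1): Perron-Frobenius alone gives only nefness, and nef-and-big is strictly weaker than ample in general; the resolution must combine the spectral-radius rigidity from Perron-Frobenius with the genuine bigness of $H$ (not merely its pseudo-effectivity) through an equivariant Mori-type argument to rule out null curves of the candidate ample class.
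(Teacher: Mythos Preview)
The paper itself gives no proof of this proposition, citing instead \cite[Propositions 1.1 and 2.9]{MZ18}, so I assess your proposal on its own.

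Your architecture is sound but there are real gaps. Two smaller ones first: the Kronecker step in your reduction of (3) is unjustified, since $\lambda_j/q$ need not be an algebraic integer (e.g.\ the integer matrix with characteristic polynomial $x^2-x+4$ has eigenvalues of modulus $2$, but dividing by $2$ gives $(1\pm\sqrt{-15})/4$); and your claim $H^n>0$ for big $H$ is false in general (take $A+E$ on a surface with $E$ a very negative curve) --- the correct invariant yielding $\deg f=q^n$ is the volume $\operatorname{vol}(H)$, not the top self-intersection.

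The substantive gap is the promotion of your nef--and--big eigenclass $L_\infty$ to an ample one. Your proposed route via null curves, extremal rays and Mori contractions does not work here: a null curve of a nef and big class need not generate an extremal ray of $\NE(X)$, and even when it does there is no contraction theorem absent klt/lc hypotheses on $X$, which are not assumed. The argument that actually works bypasses ``nef and big'' entirely. From $f^*H\equiv qH$ with $H$ in the interior of $\PE(X)$ one first shows, by applying your Perron--Frobenius reasoning to \emph{both} $f^*$ and $f_*=(\deg f)(f^*)^{-1}$ on $\PE(X)$ (each has $H$ as an interior eigenvector), that every eigenvalue of $f^*|_{\N^1(X)}$ has modulus exactly $q$. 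Then for $\phi:=f^*/q$ and any $w\in\N^1(X)$, the inclusions $H\pm\epsilon w\in\PE(X)$ for small $\epsilon$ give $\phi^k(H\pm\epsilon w)=H\pm\epsilon\,\phi^k w\in\PE(X)$ for all $k\in\Z$, and salience of $\PE(X)$ forces $\{\phi^k w\}_{k\in\Z}$ to be bounded; hence the closure $G$ of $\langle\phi\rangle$ in $\GL(\N^1(X))$ is compact (which incidentally forces the eigenvalues of $\phi$ to be roots of unity --- this is what your Kronecker step was aiming for). Since $\phi^{\pm 1}$ preserve $\Nef(X)$, every $g\in G$ preserves the open ample cone, so the orbit $G\cdot A_0$ of an ample $A_0$ is a \emph{compact subset of} $\Amp(X)$, and the Haar (equivalently Ces\`aro) average $\int_G gA_0\,dg$ lies in its closed convex hull, hence is genuinely ample and $\phi$-fixed. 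This compactness-of-the-orbit-inside-the-open-cone is the missing idea; once you have it, no self-intersection computation or contraction argument is needed. Your final step (promoting $\equiv$ to $\sim$ via surjectivity of $f^*-q$ on $\Pic^0(X)$) is correct.
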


A criterion for int-amplified endomorphisms is given in \cite[Theorem 1.1]{Men20}.
We refer to \cite[Theorem 1.1]{Zho21} for a similar criterion in the K\"ahler setting.

\begin{theorem}\label{main-thm-cri}  (see also Proposition \ref{Mats_Prop})
Let $f:X\to X$ be a surjective endomorphism of a projective variety $X$.
Then the following are equivalent.
\begin{itemize}
\item[(1)] The endomorphism $f$ is int-amplified.
\item[(2)] All the eigenvalues of $\varphi:=f^*|_{\N^1(X)}$ are of modulus greater than $1$.
\item[(3)] There exists some big $\R$-Cartier divisor $B$ such that $f^*B-B$ is big.
\item[(4)] If $C$ is a $\varphi$-invariant convex cone in $\N^1(X)$,
then $\emptyset\neq(\varphi-\id_{\N^1(X)})^{-1}(C)\subseteq C$.
\end{itemize}
\end{theorem}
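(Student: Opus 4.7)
I would prove the equivalences by the cycle $(1)\Rightarrow(3)\Rightarrow(2)\Rightarrow(4)\Rightarrow(1)$. The step $(1)\Rightarrow(3)$ is immediate since ample implies big. For $(4)\Rightarrow(1)$, apply (4) to $C=\Amp(X)$: since $f$ is finite surjective, a divisor is ample iff its pullback is, so $\varphi=f^*$ satisfies $\varphi(\Amp(X))=\Amp(X)$, making $\Amp(X)$ a $\varphi$-invariant convex cone. Any $L$ in the non-empty preimage $(\varphi-\id)^{-1}(\Amp(X))\subseteq\Amp(X)$ is then ample and satisfies $f^*L-L=(\varphi-\id)L$ ample, so $f$ is int-amplified.

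For $(2)\Rightarrow(4)$, hypothesis (2) excludes $1$ from the spectrum of $\varphi$, so $\varphi-\id$ is invertible and the preimage is automatically non-empty. For $c\in C$ the unique solution of $(\varphi-\id)v=c$ admits the Neumann-type expansion
\[
v=\sum_{k\ge 1}\varphi^{-k}(c),
\]
convergent because (2) makes the spectral radius of $\varphi^{-1}$ strictly less than $1$. Under the invariance $\varphi(C)=C$ (which, by the invertibility of $\varphi$, also yields $\varphi^{-1}(C)=C$), every partial sum lies in $C$, and passing to the closure (or exploiting openness of $\Amp(X)$ together with the fact that $\varphi-\id$ is a homeomorphism) places $v$ in $C$.

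The substantive step is $(3)\Rightarrow(2)$. Iterating $\varphi B-B=H$ yields $\varphi^nB-B=\sum_{k=0}^{n-1}\varphi^kH$, with each $\varphi^kH=(f^k)^*H$ big since pullback by a finite surjective morphism preserves bigness. To rule out spectral radius $\rho(\varphi)=1$, pair this identity with a Perron--Frobenius eigenvector $\ell\in\overline{\mathrm{Mov}}_1(X)$ for the dual action $\varphi^t=f_*$, where $\overline{\mathrm{Mov}}_1(X)$ (the dual cone of $\PE^1(X)$) is $\varphi^t$-invariant. Any non-zero element of $\overline{\mathrm{Mov}}_1(X)$ is strictly positive on the big cone, so $\ell(H)>0$; but $\rho=1$ forces $\varphi^t\ell=\ell$, so the left-hand side $\ell(\varphi^nB-B)=0$ while the right-hand side equals $\sum_{k=0}^{n-1}\ell(\varphi^kH)=n\,\ell(H)\neq 0$ --- a contradiction. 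Thus $\rho>1$.

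The delicate part is showing that \emph{every} eigenvalue of $\varphi$, not just the Perron one, has modulus $>1$. For an eigenvalue $\mu$ with $|\mu|\le 1$, the generalized eigenspace $V_\mu$ is $\varphi$-invariant; my plan is to decompose the relation $\varphi B-B=H$ along a $\varphi$-invariant splitting, intersect the pieces appropriately with $\PE^1(X)$, and run a refined duality-plus-positivity argument (invoking Krein--Rutman on the restricted invariant subcones, after handling complex or non-semisimple $\mu$ by passing to a suitable iterate $f^m$ to place $\mu^m$ into $[-1,1]$). This careful alignment of the Jordan structure of $\varphi$ with the positivity structure of the pseudoeffective cone is the technical heart of the proof.
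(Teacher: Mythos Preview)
Your implications $(1)\Rightarrow(3)$ and $(4)\Rightarrow(1)$ are fine (for the latter, perturb the ample $\R$-class obtained to a rational one and clear denominators). Your $(2)\Rightarrow(4)$ via the Neumann series $v=\sum_{k\ge1}\varphi^{-k}(c)$ is also correct, at least for closed $C$; for open $C$ your parenthetical remark suffices.

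The genuine gap is in $(3)\Rightarrow(2)$. Your argument correctly uses a Perron--Frobenius eigenvector $\ell\in\overline{\mathrm{Mov}}_1(X)$ for $\varphi^t=f_*$ to rule out $\rho(\varphi)=1$, but this says nothing about eigenvalues of smaller modulus once $\rho(\varphi)>1$. Your plan for the ``delicate part'' does not work as stated: the intersection of a generalized eigenspace $V_\mu$ with $\PE^1(X)$ may well be $\{0\}$, so there is no subcone on which to invoke Krein--Rutman; and if $|\mu|=1$ with $\arg(\mu)/\pi$ irrational, no iterate $f^m$ makes $\mu^m$ real, so the reduction to $\mu^m\in[-1,1]$ fails.

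The missing idea is a one-line modification of your own first step: apply Birkhoff's Perron--Frobenius theorem to $(\varphi^t)^{-1}=\frac{1}{\deg f}\,f^*|_{\N_1(X)}$ rather than to $\varphi^t$. Since both $f_*$ and $f^*$ preserve the full-dimensional pointed closed cone $\overline{\mathrm{Mov}}_1(X)=\PE^1(X)^\vee$, so does $(\varphi^t)^{-1}$; its spectral radius is $1/\mu_{\min}$ where $\mu_{\min}=\min_i|\mu_i|$, and Birkhoff produces a nonzero $\ell\in\overline{\mathrm{Mov}}_1(X)$ with $f_*\ell=\mu_{\min}\,\ell$ (so $\mu_{\min}$ is in fact real and positive). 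Pairing $f^*B-B=H$ with $\ell$ now gives
\[
0<H\cdot\ell=B\cdot(f_*\ell-\ell)=(\mu_{\min}-1)(B\cdot\ell),\qquad B\cdot\ell>0,
\]
whence $\mu_{\min}>1$, which is exactly (2). The paper itself gives no proof here, deferring to \cite{Men20}.
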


We have the following easy application.

\begin{proposition}
Consider the equivariant dynamical systems
$$\xymatrix{
f \acts X \ar@{-->}[r]^{\pi} &Y\racts g
}$$
where $\pi$ is dominant.
Then the following hold.
\begin{enumerate}
\item If $f$ is $q$-polarized (resp.~int-amplified), then so is $g$.
\item Suppose $\dim X=\dim Y$.
Then $f$ is $q$-polarized (resp.~int-amplified) if so is $g$.
\end{enumerate}
\end{proposition}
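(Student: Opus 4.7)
The plan is to apply the criteria given above (the polarized-criterion proposition and Theorem~\ref{main-thm-cri}) by transferring conditions between $f$ and $g$ via an equivariant pullback on the N\'eron-Severi side. The first step is to construct a linear map $\pi^*\colon \N^1(Y)\to \N^1(X)$ with three properties: (i) $\pi^*$ is injective because $\pi$ is dominant; (ii) $\pi^*\circ g^*=f^*\circ\pi^*$; (iii) if $\dim X=\dim Y$, then $\pi^*$ sends big classes to big classes. The standard way is to take an $f$-equivariant resolution of indeterminacy: a smooth projective $W$ with a birational morphism $p\colon W\to X$, a morphism $\tilde\pi:=\pi\circ p\colon W\to Y$, and a lift $f_W\colon W\to W$ satisfying $p\circ f_W=f\circ p$ and $\tilde\pi\circ f_W=g\circ\tilde\pi$ (obtained by taking the closure of the graphs of $\pi,\pi\circ f,\pi\circ f^2,\dots$ and resolving until the sequence stabilises). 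Then define $\pi^*D:=p_*\tilde\pi^*D$ and use $p_*p^*=\id$ on numerical classes to check (i)--(ii). Set $V:=\pi^*\N^1(Y)\subseteq \N^1(X)$; it is $f^*$-invariant, and $g^*$ is conjugate via $\pi^*$ to the restriction $f^*|_V$.

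For part~(1), suppose first that $f$ is int-amplified: by Theorem~\ref{main-thm-cri}(2) every eigenvalue of $f^*$ on $\N^1(X)$ has modulus greater than $1$; the eigenvalues of $g^*$ coincide with those of $f^*|_V$ and therefore also have modulus greater than $1$, so Theorem~\ref{main-thm-cri}(2) gives that $g$ is int-amplified. Now suppose $f$ is $q$-polarized, with $f^*L\sim qL$ for some ample Cartier $L$. I would invoke the classical fact that $f^*$ is then diagonalizable on $\N^1(X)$ with every eigenvalue of modulus $q$: the form $\langle M_1,M_2\rangle_L:=L^{n-2}\cdot M_1\cdot M_2$ has Lorentzian signature by Hodge index, and the identities $f^*L=qL$ and $\deg f=q^n$ give $\langle f^*M_1,f^*M_2\rangle_L=q^2\langle M_1,M_2\rangle_L$, so $f^*/q$ is an isometry fixing the timelike direction spanned by $L$ and is therefore diagonalizable with eigenvalues of modulus $1$. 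Since restriction to an invariant subspace preserves diagonalizability and the eigenvalue moduli, $g^*$ is diagonalizable on $\N^1(Y)$ with all eigenvalues of modulus $q$, and criterion~(3) of the polarized-criterion proposition gives that $g$ is $q$-polarized.

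For part~(2), the hypothesis $\dim X=\dim Y$ makes $\pi$ generically finite, so $\pi^*$ preserves bigness. If $g$ is int-amplified, pick by Theorem~\ref{main-thm-cri}(3) a big $\R$-Cartier $B_Y$ with $g^*B_Y-B_Y$ big; then $\pi^*B_Y$ is big on $X$ and $f^*(\pi^*B_Y)-\pi^*B_Y=\pi^*(g^*B_Y-B_Y)$ is big, so Theorem~\ref{main-thm-cri}(3) yields that $f$ is int-amplified. If $g$ is $q$-polarized with ample $L_Y$ satisfying $g^*L_Y\sim qL_Y$, then $H:=\pi^*L_Y$ is a big $\R$-Cartier divisor on $X$ with $f^*H\equiv qH$, and criterion~(1) of the polarized-criterion proposition gives that $f$ is $q$-polarized. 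The main technical obstacle throughout is the rigorous construction of the equivariant pullback $\pi^*$ in the rational-map setting --- the existence of the $f$-equivariant resolution, injectivity, equivariance, and preservation of bigness in the generically finite case --- after which both parts reduce to short applications of the two criteria.
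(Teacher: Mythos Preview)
Your overall strategy---reduce everything to the numerical criteria (the polarized-criterion proposition and Theorem~\ref{main-thm-cri}) by transporting along an injective equivariant pullback $\pi^*\colon\N^1(Y)\to\N^1(X)$---is precisely what the paper intends; the proposition is stated there without proof as an ``easy application'' of those two results, and your deductions for parts (1) and (2) (eigenvalue transfer for int-amplified, the Hodge-index diagonalisation for polarized, and preservation of bigness under generically finite pullback) are the right ones.

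The one place that needs repair is your construction of the equivariant model. A \emph{smooth} $W$ carrying a lifted \emph{endomorphism} $f_W\colon W\to W$ does not exist in general: functorial resolution is not equivariant for non-invertible maps, and your parenthetical about the graphs of $\pi,\pi\circ f,\pi\circ f^2,\dots$ ``stabilising'' does not actually produce one. The correct and much simpler choice is to take $W$ to be the normalisation of the closure of the graph $\Gamma_\pi\subset X\times Y$. Since $g\circ\pi=\pi\circ f$, the finite morphism $f\times g$ on $X\times Y$ sends $\Gamma_\pi$ (hence its closure) into itself, and therefore lifts via the universal property of normalisation to a surjective endomorphism $f_W$ of $W$ satisfying $p\circ f_W=f\circ p$ and $\tilde\pi\circ f_W=g\circ\tilde\pi$. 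You lose smoothness of $W$, but you never use it: $\tilde\pi^*$, $p_*$, and the identity $p_*p^*=\id$ on numerical classes hold for any birational morphism from a normal variety. With this fix your verification of injectivity, equivariance, and (in the equidimensional case) preservation of bigness goes through, and the rest of the argument stands as written.
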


A criterion for amplified endomorphisms is given in \cite[Proposition 3.1]{Men23}.

\begin{proposition}\label{prop-a-cri}
Let $f:X\to X$ be a surjective endomorphism of a projective variety $X$.
Then the following are equivalent.
\begin{enumerate}
\item $f$ is amplified.
\item For any $Z\in \NE(X)$, $f_*Z\equiv Z$ implies $Z\equiv 0$. 
\end{enumerate}
\end{proposition}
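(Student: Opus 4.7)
The plan is to exploit the duality between $\N^1(X)$ and $\N_1(X)$ under the intersection pairing, combined with the projection formula $(f^{*}D)\cdot Z = D\cdot (f_{*}Z)$; this identifies $f_{*}-\id$ on $\N_1(X)$ with the transpose of $f^{*}-\id$ on $\N^1(X)$, so the subspaces $\Imm(f^{*}-\id)\subseteq \N^1(X)$ and $\ker(f_{*}-\id)\subseteq \N_1(X)$ are mutual annihilators under the pairing.

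For $(1)\Rightarrow(2)$ I would simply compute. Writing $f^{*}L-L=H$ with $H$ ample and taking $Z\in\NE(X)$ with $f_{*}Z\equiv Z$, the projection formula gives
\[H\cdot Z \;=\; (f^{*}L-L)\cdot Z \;=\; L\cdot (f_{*}Z-Z) \;=\; 0,\]
and Kleiman's criterion (an ample class pairs strictly positively with every nonzero element of $\NE(X)$) then forces $Z\equiv 0$.

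For the converse $(2)\Rightarrow(1)$, I would argue by contradiction via convex separation. Suppose the linear subspace $W:=\Imm(f^{*}-\id)\subseteq \N^1(X)$ does not meet the open convex cone $\Amp(X)$. A separating hyperplane argument produces a nonzero $\alpha\in \N_1(X)$ with $\alpha\cdot L=0$ for all $L\in W$ and $\alpha\cdot L\geq 0$ for all $L\in \Nef(X)=\overline{\Amp(X)}$. The second condition, via the Kleiman duality $\NE(X)=\Nef(X)^{\vee}$, places $\alpha\in\NE(X)$; the first, via the projection formula, translates into $(f_{*}\alpha-\alpha)\cdot D=0$ for every $D\in \N^1(X)$, i.e.\ $f_{*}\alpha\equiv\alpha$, contradicting (2).

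This last step delivers only an $\R$-divisor class $L\in \N^1(X)$ with $(f^{*}-\id)L$ ample, so to recover the stated definition of amplified I would finally upgrade to an integral Cartier class: since $f^{*}-\id$ is defined over $\Q$, the rational points of $W$ are dense in $W$, and openness of $\Amp(X)$ yields a rational $L$ with $f^{*}L-L$ ample; clearing denominators and lifting along $\Pic(X)\twoheadrightarrow \NS(X)$ then produces a Cartier $L$ for which $f^{*}L-L$ is an ample Cartier divisor. The main obstacle is really just the separation step in $(2)\Rightarrow(1)$, and it reduces to standard convex geometry once the cone duality and the projection formula are in hand.
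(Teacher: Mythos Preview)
Your proof is correct. Note that the paper itself does not supply a proof of this proposition; it merely records the statement and cites \cite[Proposition 3.1]{Men23}. Your argument---the projection formula plus Kleiman's criterion for $(1)\Rightarrow(2)$, and for $(2)\Rightarrow(1)$ a hyperplane separation between the linear subspace $\Imm(f^{*}-\id)$ and the open ample cone, followed by a rational-approximation step to pass from an $\R$-class to an integral Cartier $L$---is the standard duality approach and goes through without difficulty.
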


We give a proof to the well-known result below since no reference is available.
This result is applicable to varieties of Fano type or Mori Dream Spaces.

\begin{proposition}\label{prop-iamp-amp}
Let $f:X\to X$ be an amplified (or quasi-amplified) endomorphism of a projective variety $X$.
Suppose either the nef cone $\Nef(X)$ or the closed cone $\PE(X)$ of pseudo-effective divisors is a rational polyhedron.
Then $f$ is int-amplified.
\end{proposition}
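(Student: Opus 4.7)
The plan is to verify condition (2) of Theorem \ref{main-thm-cri} for $\varphi := f^*|_{\N^1(X)}$, namely that every eigenvalue of $\varphi$ has modulus greater than $1$. The rational-polyhedrality hypothesis forces the relevant eigenvalues to be \emph{positive integers} (after passing to a suitable iterate of $f$), and combined with the (quasi-)amplified hypothesis, which excludes the eigenvalue $1$, this yields each such eigenvalue $\ge 2$.

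First I would verify that $\varphi$ acts as a linear automorphism of the chosen cone. Since $X$ is projective, $f$ is automatically finite, so $f^*$ sends ample to ample and pseudo-effective to pseudo-effective; and via the identity $f_* f^* = \deg(f)\cdot\mathrm{id}$ on $\N^1(X)$, the inverse $\varphi^{-1}$ also preserves $\Nef(X)$ (since $D$ is nef iff $f^*D$ is nef for a finite surjective $f$) and $\PE(X)$. Therefore $\varphi$ permutes the finite set of extremal rays of the chosen rational polyhedral cone. Replacing $f$ by a suitable iterate $f^N$---which stays (quasi-)amplified, because $(f^N)^*L - L = \sum_{k=0}^{N-1}(f^k)^*H$ is a sum of ample (resp.~big) pullbacks, and which does not affect int-amplifiedness by Theorem \ref{main-thm-cri}(2)---I may assume $\varphi$ fixes every extremal ray.

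Next, each extremal ray of the rational polyhedral cone is generated by a rational vector $v_i\in\N^1(X)_{\Q}$, and $\varphi v_i = \lambda_i v_i$ with $\lambda_i>0$. Since $\varphi$ is $\Z$-linear on $\NS(X)/\mathrm{torsion}$ and $v_i$ is rational, $\lambda_i\in\Q$; being a root of the monic integer characteristic polynomial of $\varphi$, $\lambda_i$ must be a positive integer. In the eigenspace decomposition $\N^1(X)_{\R}=\bigoplus V_\lambda$ of $\varphi$, the chosen cone splits as $\bigoplus C_\lambda$ with each $C_\lambda$ having non-empty relative interior in $V_\lambda$. If one chooses the cone so that the class $H$ lies in its interior---that is, $\Nef(X)$ for the amplified case and $\PE(X)$ for the quasi-amplified case---then $H$ has non-zero projection onto each $V_\lambda$. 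But the relation $(\varphi-\mathrm{id})L = H$ forces the projection of $H$ onto $V_1$ to be zero, so $V_1 = 0$ and each $\lambda_i\ge 2$.

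Finally, the chosen cone has non-empty interior, so its extremal rays span $\N^1(X)$, hence $\varphi$ is diagonalizable with spectrum $\subset\Z_{\ge 2}$. Undoing the iterate, every eigenvalue of the original $f^*$ has modulus $\ge 2^{1/N}>1$, and Theorem \ref{main-thm-cri} concludes that $f$ is int-amplified. The main subtlety I anticipate is the dual-preservation step: ensuring that $\varphi^{-1}$ also preserves the chosen cone, so that $\varphi$ genuinely acts as a cone automorphism rather than merely mapping the cone into itself. Without this, extremal rays could be pushed into the interior, no power of $\varphi$ would fix each extremal ray, and the rationality trick $\lambda_i\in\Z$ would collapse. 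The other point of care is to pair the (quasi-)amplified hypothesis with the cone that actually contains $H$ in its interior.
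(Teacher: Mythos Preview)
The first half of your argument coincides with the paper's: both use the rational polyhedrality to see that $f^*$ permutes the finitely many extremal rays, pass to an iterate so that each ray is fixed, and observe that the resulting eigenvalues are positive integers. The divergence is in how the eigenvalue $1$ is excluded. You argue via criterion~(2) of Theorem~\ref{main-thm-cri}: since $H$ lies in the interior of the cone and the cone splits as $\bigoplus_\lambda C_\lambda$, the class $H$ has nonzero component in every nonzero $V_\lambda$; but $(\varphi-\id)L=H$ forces $H_1=0$, so $V_1=0$. The paper instead applies criterion~(3): writing $A=\sum_{r_i>1}a_i(r_i-1)v_i$ (which is big by hypothesis) and replacing it by $A'=\sum_{r_i>1}|a_i|(r_i-1)v_i$, one has $A'$ big (big plus pseudo-effective is big) and then $f^*A'-A'=\sum_{r_i>1}|a_i|(r_i-1)^2 v_i$ is again big by the same reasoning, since $r_i\ge 2$.

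Your interior argument is clean, but it leaves one of the four subcases open. The proposition covers every combination of \{amplified, quasi-amplified\} with \{$\Nef(X)$ polyhedral, $\PE(X)$ polyhedral\}. When $f$ is only quasi-amplified and the hypothesis gives you $\Nef(X)$ polyhedral (with no control on $\PE(X)$), the divisor $H$ is big but need not lie in $\Nef(X)$ at all, let alone its interior, so the step ``$H$ interior $\Rightarrow H_\lambda\ne 0$ for all $\lambda$'' is unavailable. Your final sentence flags precisely this pairing concern without resolving it. The paper's construction sidesteps the difficulty because it only needs the $v_i$ to be pseudo-effective---which nef extremal rays certainly are---so that ``big plus a nonnegative combination of the $v_i$ stays big'' holds in either cone; the computation then goes through verbatim for the $\Nef(X)$ case even with a merely big $H$.
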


\begin{proof}
We give a proof for $\PE(X)$.
For $\Nef(X)$, the proof is similar.
Since $\PE(X)$ is a rational polyhedron and spans $\N^1(X)$,  our $f^*$ fixes all the (finitely many) extremal rays of $\PE(X)$ 
after iteration and hence $f^*|_{\N^1(X)}$ is a diagonal action.
Let $v_1,\cdots, v_n$ be a basis of $\NS_{\Q}(X)$ such that $\mathbb{R}_{\ge 0} v_i$ are all extremal rays of $\PE(X)$.
Write $f^*v_i=r_i v_i$ with $r_i>0$.
Note that $r_i$ is also an algebraic integer since it is an eigenvalue of $f^*|_{\NS(X)/{\rm (torsion)}}$.
So $r_i$ is a positive integer.
Assume that $r_i=1$ for $i\le t$ and $r_i>1$ for $i>t$.
By the assumption, $f^*D-D=A$ is big for some divisor $D$.
Write $D=\sum\limits_{i=1}^n a_i v_i$ in $\N^1(X)$.
Note that $A=\sum\limits_{i=t+1}^n a_i(r_i-1)v_i$ is big.
Hence $A' := \sum\limits_{i=t+1}^n |a_i|(r_i-1)v_i$ is big too, since $v_i$ is pseudo-effective.
Now $f^*A' - A' = \sum\limits_{i=t+1}^n |a_i|(r_i-1)^2v_i$ is big too.
Thus $f$ is int-amplified by Theorem \ref{main-thm-cri}.
\end{proof}

Thanks to Matsuzawa for bringing the following nice equivalence to our attention.

\begin{proposition}\label{Mats_Prop}
Let $f:X\to X$ be a surjective endomorphism of a projective variety $X$ of dimension $n$.
Then $f$ is int-amplified if and only if $f$ is $n$-cohomologically hyperbolic.
\end{proposition}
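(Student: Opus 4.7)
The plan is to reduce $n$-cohomological hyperbolicity to the single inequality $\delta_n(f) > \delta_{n-1}(f)$, and then to read off both quantities from the spectrum of $\varphi := f^*|_{\N^1(X)}$.

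First I would invoke the log-concavity of dynamical degrees, $\delta_i(f)^2 \ge \delta_{i-1}(f)\delta_{i+1}(f)$ for $1 \le i \le n-1$ (a Khovanskii--Teissier type inequality). Since $\delta_0(f)=1$, the ratios $\delta_i/\delta_{i-1}$ are non-increasing, so the single inequality $\delta_n > \delta_{n-1}$ already forces $\delta_0 < \delta_1 < \cdots < \delta_n$ strictly; hence $f$ is $n$-cohomologically hyperbolic if and only if $\delta_n(f) > \delta_{n-1}(f)$.

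Next I would compute both extremal degrees. The top one is immediate: $\delta_n(f) = \deg(f)$, since $((f^k)^*H)^n = (\deg f)^k\, H^n$. For $\delta_{n-1}(f)$ the projection formula gives
\[
((f^k)^*H)^{n-1}\cdot H \;=\; (f^k)^*(H^{n-1})\cdot H \;=\; H^{n-1}\cdot (f^k)_*H,
\]
so $\delta_{n-1}(f)$ is controlled by the growth of $\psi^k(H)$ paired against $H^{n-1}$, where $\psi := f_*|_{\N^1(X)}$. Since $f$ is finite of degree $d := \deg(f)$, the identity $f_* \circ f^* = d\cdot \id$ on $\N^1(X)$ forces $\psi = d\,\varphi^{-1}$, so the eigenvalues of $\psi$ are $d/\lambda_j$ where $\lambda_1,\dots,\lambda_\rho$ are those of $\varphi$. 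Appealing to Dang's theorem \cite{Dan20} to identify $\delta_{n-1}(f)$ with the spectral radius of $\psi$ on $\N^1(X)$, one obtains $\delta_{n-1}(f) = d/\min_j |\lambda_j|$. Combining,
\[
\delta_n(f) > \delta_{n-1}(f) \iff \min_j |\lambda_j| > 1 \iff |\lambda_j| > 1 \text{ for every } j,
\]
and by the equivalence (1)$\Leftrightarrow$(2) in Theorem~\ref{main-thm-cri}, the right-hand condition is exactly that $f$ is int-amplified.

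The main obstacle I expect is the identity $\delta_{n-1}(f) = d/\min_j|\lambda_j|$: for smooth $X$ this follows from the Dinh--Sibony spectral-radius interpretation, but in the possibly singular setting one either invokes Dang's theorem or gives a direct estimate ruling out the degenerate possibility that the expansion of $H$ in a Jordan basis of $\psi$ has vanishing components along the eigenspaces of smallest $|d/\lambda_j|$ (a genericity issue handled by perturbing the ample class $H$). Once this spectral reading of $\delta_{n-1}$ is in place, the remaining argument is bookkeeping with the projection formula and Theorem~\ref{main-thm-cri}.
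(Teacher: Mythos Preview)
Your proposal is correct and follows essentially the same route as the paper: log-concavity reduces to the single inequality $\delta_{n-1}(f)<\delta_n(f)=\deg f$, the projection formula together with $f_*f^*=(\deg f)\id$ converts this into a spectral condition, and Theorem~\ref{main-thm-cri} finishes. The only cosmetic difference is that the paper phrases the spectral step on $\N_1(X)$ (reading $\delta_{n-1}(f)$ as the spectral radius of $f^*|_{\N_1(X)}$ and using that $f_*|_{\N_1(X)}$ is adjoint to $f^*|_{\N^1(X)}$), whereas you work on $\N^1(X)$ via $\psi=f_*|_{\N^1(X)}=d\,\varphi^{-1}$; these are dual formulations of the same computation, and your acknowledged ``obstacle'' about the spectral identification is exactly the input the paper also takes from \cite{Dan20}.
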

\begin{proof}
By the log concavity of $\delta_k(f)$ (cf.~\cite[Theorem 1.2]{Gue05}), $f$ is $n$-cohomologically hyperbolic if and only if $\delta_{n-1}(f)<\delta_n(f)=\deg f$.
Note that $\delta_{n-1}(f)$ is the spectral radius of $f^*|_{\N_{1}(X)}$.
By the projection formula, $f_*f^*=(\deg f)\id$ on $\N_1(X)$ and $f^*D\cdot C=D\cdot f_*C$ for any $D\in \N^1(X)$ and $C\in \N_1(X)$.
So $\delta_{n-1}(f)<\deg f$ is equivalent to that all the eigenvalues of $f^*|_{\N^1(X)}$ have modulus $>1$.
Then we apply Theorem \ref{main-thm-cri}.
\end{proof}

The following results \cite[Theorems 1.5 and 1.7]{Men23} give characterization and comparison of amplified and 
PCD endomorphisms of abelian varieties in terms of their actions on the cohomology groups and divisor classes.

\begin{theorem}
Let $f:A\to A$ be a surjective endomorphism of an abelian variety $A$.
Then the following hold.
\begin{itemize}
\item[(1)] $f$ is amplified if and only if no eigenvalue of $f^*|_{H^1(A,\mathcal{O}_A)}$ is of modulus $1$.
\item[(2)] $f$ is PCD if and only if no eigenvalue of $f^*|_{H^1(A,\mathcal{O}_A)}$ is a root of unity.
\end{itemize}
\end{theorem}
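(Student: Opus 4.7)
My plan is to reduce everything to the isogeny case and read the conclusions off the analytic representation of $f$. Any surjective endomorphism of an abelian variety factors as $f = T_a \circ \phi$ with $T_a$ a translation and $\phi$ an isogeny fixing the origin; translations act trivially on both $\N^1(A)$ and $H^1(A, \OO_A)$, and (after passing to a suitable iterate and conjugating by a translation) they merely translate the set of periodic points. Let $F : V \to V$ be the analytic representation of $\phi$ on the universal cover $V \cong \C^g$, with eigenvalues $\lambda_1, \dots, \lambda_g$; by Poincar\'e complete reducibility $F$ is semisimple. The Hodge decomposition $H^1(A, \C) = H^{1,0} \oplus H^{0,1}$ with $H^{0,1} = H^1(A, \OO_A) = \overline{H^0(A, \Omega^1_A)}$ identifies the eigenvalues of $f^*$ on $H^1(A, \OO_A)$ with $\overline{\lambda_1}, \dots, \overline{\lambda_g}$, so the two conditions in the theorem become, respectively, ``no $|\lambda_i| = 1$'' and ``no $\lambda_i$ is a root of unity''.

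For (1), I would identify $\N^1(A)_\R$ with the real vector space of Hermitian forms on $V$ via Appell--Humbert, so that ample classes correspond to positive definite forms and $f^*$ acts by $H \mapsto H(F\cdot, F\cdot)$. In a basis $e_1, \dots, e_g$ of $V$ diagonalizing $F$, the operator $f^* - \id$ sends a Hermitian matrix $(\ell_{ij})$ to $(\ell_{ij}(\lambda_i \overline{\lambda_j} - 1))$, so its image consists of the Hermitian matrices $(m_{ij})$ with $m_{ij} = 0$ whenever $\lambda_i \overline{\lambda_j} = 1$. If every $|\lambda_i| \neq 1$, the diagonal is unconstrained and the identity matrix (positive definite) lies in the image; since this image is a rational subspace of $\N^1(A)_\R$ and ampleness is open, one can approximate a real preimage by an integral divisor $L$ with $f^*L - L$ still ample, giving amplification. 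Conversely, if $|\lambda_{i_0}| = 1$, then every matrix in the image vanishes at $(i_0, i_0)$ and so represents a Hermitian form $H$ with $H(e_{i_0}, e_{i_0}) = 0$; such a form is never positive definite, so no ample class lies in the image and $f$ is not amplified.

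For (2), both directions go through the degree formula $\deg(\phi^n - \id) = \prod_i |\lambda_i^n - 1|^2$. If some $\lambda_{i_0}^N = 1$, then $\det(F^N - I) = 0$ and $\ker(\phi^N - \id)$ is positive-dimensional in $A$; after absorbing the translation and passing to a further iterate, one obtains a positive-dimensional component of $\mathrm{Fix}(f^M)$ for some $M$, whose $K$-points are uncountable over any uncountable algebraically closed $K/k$, so $f$ is not PCD. Conversely, assume no $\lambda_i$ is a root of unity. Then each $\phi^n - \id$ is an isogeny with finite kernel, so $\Per(\phi) = \bigcup_n \ker(\phi^n - \id)$ is a countable subgroup of the torsion of $A$. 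For density I apply Poincar\'e complete reducibility and reduce, after iteration, to the case where $A$ is simple; there the only proper closed algebraic subgroups are finite, so it suffices to show $\Per(\phi)$ is infinite. If it were finite, then $\deg(\phi^n - \id)$ would be bounded in $n$, forcing $\prod_{|\lambda_i|>1} |\lambda_i|^{2n}$ to stay bounded and hence $|\lambda_i| \le 1$ for all $i$; since $\lambda_i$ and $\overline{\lambda_i}$ are algebraic integers (roots of the integer characteristic polynomial of $\phi_*$ on $H_1(A, \Q)$), Kronecker's theorem then makes each $\lambda_i$ a root of unity, contradicting the hypothesis.

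The linear-algebra content of (1) is essentially formal once the Hermitian-form dictionary is installed, and the easier direction of (2) is a dimension count. The main obstacle is the density step in the backward direction of (2): it requires a structural input (Poincar\'e reducibility together with an iteration making $\phi$ respect the decomposition into simple factors), a Diophantine input (Kronecker's theorem on algebraic integers with all conjugates in the closed unit disc), and careful bookkeeping of the translation $T_a$ when transferring periodicity between $f$ and $\phi$.
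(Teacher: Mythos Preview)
The present paper is a survey and does not itself prove this statement; it attributes the result to \cite{Men23}. So I can only assess your proposal on its own merits.

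There is a genuine gap in the backward direction of (1). You compute the image of $f^*-\id$ acting on the \emph{entire} $g^2$-dimensional real space of Hermitian forms on $V$ and observe that, when no $|\lambda_i|=1$, this image contains the identity matrix. But $\N^1(A)_\R$ is in general a proper subspace of the Hermitian forms: its real dimension is the Picard number $\rho(A)$, which may be as small as $1$. Neither the ``identity matrix in the eigenbasis of $F$'' nor its preimage need lie in $\N^1(A)_\R$, so you have not produced a Cartier divisor $L$ with $f^*L-L$ ample; your sentence ``this image is a rational subspace of $\N^1(A)_\R$'' conflates the two operators. (The forward direction is unaffected: if $|\lambda_{i_0}|=1$, every form in the image has vanishing $(i_0,i_0)$-entry and so is not positive definite, and this persists after restricting to $\N^1(A)_\R$.) A correct route is to argue instead that any $f^*$-fixed \emph{nef} class $H\in\N^1(A)_\R$, being positive semi-definite with all diagonal entries zero in the eigenbasis, must vanish identically; that is exactly the hypothesis of the companion theorem stated immediately afterwards in the paper (``$f$ is amplified iff $f^*D\not\equiv D$ for every nonzero nef $\R$-divisor $D$''), and the two results are proved in tandem rather than independently.

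For (2) your ingredients are correct but two steps are too quick. The ``reduction to simple $A$'' does not work as written: Poincar\'e reducibility only splits $A$ up to isogeny into isotypic pieces $B_j^{n_j}$, and $\phi$ need not diagonalise further on a power $B^n$ of a simple factor, where there are plenty of infinite proper closed subgroups. A clean fix is induction on $\dim A$: set $B=(\overline{\Per(\phi)})^0$; if $B=0$ your Kronecker argument gives the contradiction, while if $0\subsetneq B\subsetneq A$ one passes to $A/B$, uses that $\Per(\phi)$ surjects onto $\Per(\overline\phi)$ (each $\phi^n-\id$ being an isogeny, hence surjective on $B$), and contradicts the finiteness of $\overline{\Per(\phi)}/B$ against the inductive density of $\Per(\overline\phi)$ in $A/B$. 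Likewise, in the forward direction your phrase ``after absorbing the translation'' hides a dichotomy: either $\Per(f)=\emptyset$ (then not dense, done), or $f$ has a periodic point of some period $p$, in which case $f^p$ is conjugate by a translation to $\phi^p$, and then $\mathrm{Fix}(f^{\mathrm{lcm}(p,N)})$ is a translate of the positive-dimensional $\ker(\phi^{\mathrm{lcm}(p,N)}-\id)$.
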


\begin{theorem}
Let $f:A\to A$ be a surjective endomorphism of an abelian variety $A$.
Then the following hold.
\begin{itemize}
\item[(1)] $f$ is amplified if and only if $f^*D\not\equiv D$ for any nef $\R$-Cartier divisor $D\not\equiv 0$.
\item[(2)] $f$ is PCD if and only if $f^*D\not\equiv D$ for any nef Cartier divisor $D\not\equiv 0$.
\end{itemize}
\end{theorem}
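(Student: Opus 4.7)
The strategy is to transport both statements to linear-algebraic conditions on $\phi := f^*|_{H^1(A,\mathcal{O}_A)}$, where the eigenvalue criteria from the previous theorem apply directly. Write $A = V/\Lambda$ with $V = T_0 A$. The Appell--Humbert dictionary identifies $\NS(A)\otimes\R$ with a real subspace of the space of Hermitian forms on $V$: nef classes correspond to positive semi-definite (PSD) Hermitian forms, Cartier classes (mod torsion) correspond to forms whose imaginary part is $\Z$-valued on $\Lambda$, and $f^*$ acts as $H \mapsto \phi^*H\phi$.

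For the forward direction of (1), assume $f$ is amplified; by the previous theorem, no eigenvalue of $\phi$ has modulus~$1$. If $D$ is nef with $f^*D\equiv D$, then $H:=H_D$ is PSD and $\phi^*H\phi=H$. The null space $K:=\ker H$ is a complex $\phi$-invariant subspace, and $\phi$ descends to a unitary operator $\bar\phi$ on the positive-definite Hermitian space $V/K$. Hence the eigenvalues of $\bar\phi$ have modulus~$1$, but they are a subset of those of $\phi$, forcing $V/K=0$ and $D\equiv 0$. The forward direction of (2) adds one ingredient: when $D$ is Cartier, $\Imm H$ is $\Z$-valued on $\Lambda$, so $K$ is defined over $\Lambda$ and $V/K$ inherits the lattice $\Lambda/(K\cap\Lambda)$, preserved by $\bar\phi$. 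The eigenvalues of $\bar\phi$ are then unit-modulus algebraic integers whose Galois conjugates are again eigenvalues and still unit-modulus, so Kronecker's theorem forces them to be roots of unity, contradicting the PCD hypothesis unless $V/K=0$.

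For the reverse directions, I argue by contrapositive. Assume $f$ is not amplified (resp.\ not PCD); Proposition \ref{prop-a-cri} yields $0\not\equiv Z\in \NE(A)$ with $f_*Z\equiv Z$, and by adjointness of $f^*$ and $f_*$ under the intersection pairing, $1$ is an eigenvalue of $f^*$ on $\N^1(A)\otimes\R$. The functional $\ell(D):=D\cdot Z$ is $f^*$-invariant, nonnegative on $\Nef(A)$, and positive on $\Amp(A)$. Decompose $\N^1(A)\otimes\R=W_1\oplus W_{\ne 1}$ into the generalized $1$-eigenspace of $f^*$ and its $f^*$-invariant spectral complement. After replacing $f$ by a suitable iterate so that $f^*|_{W_1}$ becomes the identity, the image $(f^*-\id)(\N^1\otimes\R)=W_{\ne 1}$ is contained in $\ker \ell$, so the $W_1$-projection of any ample class $A_0$ is nonzero (since $\ell(P_{W_1}(A_0))=\ell(A_0)>0$). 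One then produces a nonzero $f^*$-fixed class inside $\Nef(A)$ by a suitably normalized subsequential limit of $(f^*)^kA_0$ isolating the $W_1$-component, using that $\Nef(A)$ is closed under limits and that $\ell$-nondegeneracy prevents the limit from vanishing. Part~(2) is parallel, working over $\NS(A)/\mathrm{torsion}$ and iterating $f$ so that a root-of-unity eigenvalue becomes $1$.

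The main obstacle is the reverse direction: producing an $f^*$-fixed class somewhere in $\N^1(A)\otimes\R$ is immediate from having $1$ in the spectrum, but confining the fixed class to $\Nef(A)\setminus\{0\}$ requires careful handling of the spectral components of $f^*$ of modulus~$>1$, on which the naive Cesaro average of an ample class diverges. The $f^*$-invariant functional $\ell$ from Proposition \ref{prop-a-cri} is the essential witness that the $W_1$-projection of any ample class survives averaging and remains in the closure of the nef cone.
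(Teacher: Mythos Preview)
This survey does not itself prove the theorem; it quotes \cite[Theorems~1.5 and 1.7]{Men23}, so there is no in-paper argument to compare against. Your forward directions via Appell--Humbert and the unitary-quotient trick are correct, with one small point to add in (2): that $K=\ker H$ is defined over $\Lambda$ needs $\ker H=\ker(\Imm H)$ as real subspaces, which holds for positive semi-definite $H$ by Cauchy--Schwarz (if $E(v,\cdot)=0$ then $H(v,v)=E(iv,v)=0$, hence $H(v,\cdot)=0$), but deserves a sentence.

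The reverse directions contain a genuine gap. From Proposition~\ref{prop-a-cri} you correctly extract an $f^*$-invariant functional $\ell$ positive on ample classes and conclude that $1\in\mathrm{Spec}(f^*|_{\N^1})$, but the step placing a fixed class inside $\Nef(A)$ does not work as written: any normalized subsequential limit of $(f^*)^k A_0$ picks out an eigenvector for the \emph{spectral radius} of $f^*$, not for the eigenvalue $1$, and the spectral projection onto $W_1$ is not a positive operator on the nef cone, so $P_{W_1}(A_0)$ has no reason to be nef --- the functional $\ell$ certifies only that it is nonzero. (Your side claim that iterating $f$ makes $f^*|_{W_1}=\id$ is also false for a \emph{generalized} $1$-eigenspace; it is harmless here only because $\mathrm{End}^0(A)$ is a semisimple algebra, forcing $f^*|_{\NS_\R}$ to be diagonalizable, a fact you never invoke.) For part~(2) there is a direct repair that bypasses the averaging entirely: once an iterate has $1$ as an eigenvalue of the analytic representation, the underlying isogeny $f-\id_A$ is non-surjective, and the pullback of any ample divisor from the positive-dimensional quotient $A/\mathrm{Im}(f-\id_A)$ is an integral nef class with $f^*D\equiv D$. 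For part~(1), when $|\lambda|=1$ but $\lambda$ is not a root of unity no such quotient is available, and a genuinely different idea is required; your sketch does not supply one.
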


\begin{remark}
$ $
\begin{enumerate}
\item It is possible that all the eigenvalues of $f^*|_{\N^1(X)}$ are $q$, but $f$ is not polarized, or equivalently,
$f^*|_{\N^1(X)}$ is not diagonalizable, see \cite[Example 10.1]{Men20}.
\item Though polarized endomorphisms are always int-amplified, it is possible for a projective variety to admit an int-amplified endomorphism 
but no polarized endomorphism, see the example constructed in \cite[Section 7]{MY21}.
\item Though the product of polarized endomorphisms is int-amplified, 
it is possible that an int-amplified endomorphism may never split as or even be dominated 
by a product of polarized endomorphisms, see \cite[Example 10.3]{Men20}.
We also refer to \cite{MZg20} for a splitting result.
\item Some K3 surface admits an automorphism of positive entropy that is not PCD,
and some K3 surface admits a PCD automorphism that is not amplified, see \cite[Examples 5.7 and 5.8]{Men23}.
\item We refer to \cite{DS04} for dynamical degrees.
\end{enumerate}

\end{remark}

We end this section with the relation between int-amplified endomorphism and Bott vanishing, due to Kawakami and Totaro \cite[Theorem C]{KT23}.
This way, they gave a different proof of Fano threefolds being toric when admitting int-amplified endomorphisms, see Remark \ref{rmk-fano}.
\begin{theorem}\label{thm-KT}
Let $X$ be a normal projective variety over a perfect field $k$. 
Suppose that $X$ admits an int-amplified endomorphism whose degree is invertible in $k$. 
Then $X$ satisfies Bott vanishing for ample Weil divisors. 
That is,
$$H^i(X, \Omega_X^{[j]}(A))=0$$
for every $i > 0, j\ge 0$, and any ample ($\Q$-Cartier) Weil divisor $A$.
\end{theorem}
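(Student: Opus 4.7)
The plan is to use the splitting of the pullback $f^*$ on coherent cohomology that is afforded by $\deg f$ being invertible in $k$. Concretely, since $f$ is finite and surjective, the trace $\frac{1}{\deg f}\,f_*$ splits the canonical inclusion $\mathcal{O}_X \hookrightarrow f_*\mathcal{O}_X$, so for every coherent sheaf $\mathcal{F}$ on $X$ one obtains a functorial injection
$$H^i(X,\mathcal{F}) \hookrightarrow H^i(X,(f^n)^*\mathcal{F}) \quad (n \ge 1).$$
Applied to $\mathcal{F} = \Omega_X^{[j]}(A)$, this reduces the desired Bott vanishing to the statement
$$H^i\bigl(X,\;(f^n)^{[*]}\Omega_X^{[j]} \otimes \mathcal{O}_X((f^n)^*A)\bigr) = 0$$
for \emph{some} $n \gg 0$, where $(f^n)^{[*]}$ denotes the reflexive (double-dual) pullback.

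The second input is the int-amplified hypothesis. By Theorem \ref{main-thm-cri}, every eigenvalue of $f^*$ on $\N^1(X)$ has modulus $>1$, so the class of $(f^n)^*A$ grows exponentially with $n$; in particular $(f^n)^*A - D$ is ample for all sufficiently large $n$ once $D$ is a fixed Cartier divisor. In the simplest case, where $f$ is quasi-\'etale (unramified in codimension one), one has $(f^n)^{[*]}\Omega_X^{[j]} \cong \Omega_X^{[j]}$, and the target
$$H^i\bigl(X, \Omega_X^{[j]}\otimes \mathcal{O}_X((f^n)^*A)\bigr) = 0$$
then follows for $n \gg 0$ by Serre-type vanishing applied to the \emph{fixed} coherent sheaf $\Omega_X^{[j]}$ twisted by the arbitrarily positive line bundle $\mathcal{O}_X((f^n)^*A)$. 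This already recovers Bott vanishing in the quasi-\'etale case.

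For general $f$, one must handle the divisorial ramification. The natural morphism $(f^n)^{[*]}\Omega_X^{[j]} \to \Omega_X^{[j]}$, defined on the smooth locus by pullback of K\"ahler differentials and extended reflexively across the codimension-two singular locus, is generically an isomorphism (since $f$ is separable), with kernel and cokernel supported on the ramification divisor $R_n$ of $f^n$. I would use the resulting short exact sequences as a d\'evissage: one piece is the fixed-sheaf Serre vanishing of the preceding paragraph, and the other piece is a vanishing on $R_n$ which is attacked inductively on dimension — noting that $f$ descends to periodic components of the ramification locus, on which by Theorem \ref{main-thm-cri} it remains int-amplified, so the induction hypothesis is available at the appropriate step.

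The main obstacle is that $R_n$ itself grows with $n$, so a naive d\'evissage threatens to let the ramification cost outpace the positivity gain from $(f^n)^*A$. A quantitative comparison is essential: because \emph{all} eigenvalues of $f^*$ on $\N^1(X)$ have modulus $>1$ (the int-amplified condition, strictly stronger than merely amplified), the positivity of $(f^n)^*A$ increases \emph{uniformly} in every numerical direction, outstripping uniformly the numerical growth of any class supported on $R_n$. Turning this uniform comparison into clean vanishings over the singular locus of $X$ — where one cannot blithely pass to a resolution since no equivariant one need exist — and tracking the reflexive pullback through it, is the technical heart of the Kawakami--Totaro argument.
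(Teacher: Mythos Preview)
The paper itself supplies no proof of this statement: Theorem~\ref{thm-KT} is quoted from Kawakami--Totaro \cite{KT23} and nothing further is said. So there is no in-paper argument to compare against, only the cited one.

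Your outline does capture the two genuine ingredients of the Kawakami--Totaro proof: the trace splitting available because $\deg f$ is invertible in $k$, and the fact that int-amplification forces $(f^n)^*A$ to dominate any fixed divisor class (so that an asymptotic vanishing theorem applies). However, you misidentify where the work lies. You use only the trace on $\mathcal{O}_X$, land in $H^i\bigl(X,(f^n)^{[*]}\Omega_X^{[j]}\otimes\mathcal{O}_X((f^n)^*A)\bigr)$, and then propose to pass from $(f^n)^{[*]}\Omega_X^{[j]}$ to $\Omega_X^{[j]}$ by a d\'evissage along the ramification locus, worrying for two paragraphs that the ramification ``cost'' may outpace the positivity gain. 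This d\'evissage is never carried out, and as sketched it has a real gap: the ramification divisor of $f^n$ is $\sum_{k=0}^{n-1}(f^k)^*R_f$, whose components are mostly \emph{not} $f$-periodic, so the ``induction on dimension via int-amplified restriction to periodic components'' does not see them. The actual argument avoids this entirely: one uses a trace map on reflexive differentials $\operatorname{Tr}\colon f_*\Omega_X^{[j]}\to\Omega_X^{[j]}$ (defined on the big open set where $f$ is \'etale between regular loci and extended by reflexivity) satisfying $\operatorname{Tr}\circ f^*=(\deg f)\cdot\id$. This makes $\Omega_X^{[j]}(A)$ a direct summand of $f_*\bigl(\Omega_X^{[j]}((f^*)A)\bigr)$ outright, so $H^i(X,\Omega_X^{[j]}(A))$ embeds in $H^i(X,\Omega_X^{[j]}((f^n)^*A))$ with no ramification term to track.

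One smaller point: the ``Serre-type vanishing'' you invoke at the end is not Serre vanishing (which fixes an ample line bundle and takes high powers of it). What is needed, and what Kawakami--Totaro use, is Fujita vanishing: for fixed coherent $\mathcal{F}$ and ample Cartier $H$ there exists $m_0$ with $H^i(X,\mathcal{F}(mH+N))=0$ for all $i>0$, $m\ge m_0$, and nef $N$. One then checks, using the eigenvalue criterion of Theorem~\ref{main-thm-cri}, that $(f^n)^*A$ eventually lies in the required range. This holds over any field and is the correct citation.
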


\section{Int-amplified Equivariant Minimal Model Program (EMMP)}
In this section, we develop the Equivariant Minimal Model Program (EMMP) for arbitrary surjective endomorphisms of a $\Q$-factorial klt projective variety $X$ 
which admits an int-amplified endomorphism. This way, one is reduced to the study of endomorphisms on 
$Q$-abelian varieties (quasi-\'etale quotient of some abelian variety) and Fano varieties.

Let $\SEnd(X)$ be the set of {\it all surjective endomorphisms} on $X$.
A submonoid $G$ of a monoid $\Gamma$ is said to be of {\it finite-index} in $\Gamma$
if there is a chain $G = G_0 \le G_1 \le \cdots \le G_r = \Gamma$ of submonoids and homomorphisms $\rho_i : G_i \to F_i$ 
such that $\Ker(\rho_i) = G_{i-1}$ and all the $F_i$ are finite
{\it groups}.

An lc pair $(X, \Delta)$ may have infinitely many $(K_X + \Delta)$-negative extremal rays.
However, this case will never happen if $X$ further admits an int-amplified endomorphism;
moreover, every MMP starting from $X$ is equivariant relative to $\SEnd(X)$, up to finite-index (cf. \cite[Theorem 1.1]{MZ20a}).

\begin{theorem}
Let $X$ be a normal projective variety admitting an int-amplified endomorphism. Then we have:
\begin{itemize}
\item[(1)]
Suppose $(X, \Delta)$ is lc for some effective $\R$-divisor $\Delta$. Then there are only finitely many $(K_X + \Delta)$-negative extremal rays.
\item[(2)]
Suppose $X$ is $\Q$-factorial. Then any finite sequence of MMP starting from $X$ is $G$-equivariant for some finite-index submonoid $G$ of $\SEnd(X)$.
\end{itemize}
\end{theorem}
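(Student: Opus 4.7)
The plan is to establish (1) by combining the classical cone theorem with the expanding dynamics that $f$ induces on $\N_1(X)$, and then to deduce (2) by inducting on the length of the MMP using (1) to reduce the combinatorics at each step to a finite orbit--stabilizer argument.

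\emph{For (1).} By Theorem \ref{main-thm-cri}, the int-amplified hypothesis gives that $\varphi:=f^*|_{\N^1(X)}$ has all eigenvalues of modulus strictly greater than $1$; transposing under the perfect pairing $\N^1(X)\times\N_1(X)\to\R$, the pushforward $\psi:=f_*|_{\N_1(X)}$ has the same eigenvalues and is in particular expanding. Invertibility of $\varphi-\id$ yields a unique $D\in \N^1(X)_\R$ with $f^*D-D=K_X+\Delta$, realized by the geometric series $D=-\sum_{k\geq 1}\varphi^{-k}(K_X+\Delta)$. I would then argue by contradiction: if there were infinitely many $(K_X+\Delta)$-negative extremal rays $R_i=\R_{\geq 0}[v_i]$, I would normalize by a fixed ample $H$ so that each $v_i$ lies in the compact slice $\{H\cdot v=1\}\cap\NE(X)$; the classical cone theorem forces every accumulation point $v_\infty$ to lie on the hyperplane $\{K_X+\Delta=0\}$. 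The key move is to apply $\psi^n$: since $(K_X+\Delta)\cdot \psi^n v=\varphi^n(K_X+\Delta)\cdot v$, one plays the norm growth of $\varphi^n(K_X+\Delta)$ against the vanishing of $(K_X+\Delta)\cdot v_\infty$ to manufacture, in the limit, a family of $(K_X+\Delta)$-negative extremal rays whose intersection with $K_X+\Delta$ stays below a fixed negative level, contradicting the local finiteness of extremal rays in that half-space.

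\emph{For (2).} Induct on the length $r$ of the MMP sequence. For $r=1$, the step is a contraction $\tau:X\to Y$ of some $K_X$-negative extremal ray $R$; by (1) the set $\Sigma$ of $K_X$-negative extremal rays is finite. Every $g\in\SEnd(X)$ acts on $\N_1(X)$ by $g_*$, preserves $\NE(X)$, and using that $g_*$ is a linear automorphism of $\N_1(X)$ whose restriction is an isomorphism of $\NE(X)$ onto a subcone (again via Theorem \ref{main-thm-cri} applied to $g$, noting that int-amplified endomorphisms generate a finite-index submonoid of $\SEnd(X)$), one checks that $g_*$ permutes $\Sigma$. The kernel of the resulting action of $\SEnd(X)$ on the finite set $\Sigma$ is a finite-index submonoid $G_1$ fixing $R$. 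For each $g\in G_1$, $\tau\circ g$ contracts precisely the curves in $R$, so rigidity yields a factorization $\tau\circ g=g'\circ \tau$, producing $g':Y\to Y$; in the divisorial and Fano cases $Y$ is $\Q$-factorial klt and inherits an int-amplified endomorphism, and one continues the induction on $Y$. In the flipping case, one transports the $G_1$-action through the flipping diagram using the uniqueness of the flip $X^+$ and continues on $X^+$ instead. Intersecting the finite-index submonoids obtained at each of the $r$ steps yields the desired $G$.

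The main obstacle is (1): rigorously turning the expansion of $\psi$ into a contradiction with the local finiteness supplied by the cone theorem. The crux is to identify a single generalized eigenspace of $\psi$ into which a subsequence of the normalized $v_i$'s concentrates, and to exploit that eigenvalues of $f_*|_{\NS(X)/\mathrm{torsion}}$ are algebraic integers of modulus $>1$ so that one obtains a uniform lower bound on $H\cdot \psi^n v_i$ and a uniform gap in $(K_X+\Delta)\cdot \psi^n v_i$ below zero; this produces genuinely new extremal rays violating local finiteness. Once (1) is secured, part (2) is essentially formal: orbit--stabilizer on finitely many extremal rays, rigidity for contractions, uniqueness for flips, and the fact that a finite intersection of finite-index submonoids is finite-index.
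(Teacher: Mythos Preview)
The paper is a survey and supplies no proof of this statement; it simply cites \cite[Theorem~1.1]{MZ20a}. So there is no in-paper argument to compare against, and I assess your outline on its own terms.

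For (1), the strategy of playing the expansion of $f_*$ against the local finiteness coming from the cone theorem is natural, but the argument does not close. You introduce the class $D$ with $f^*D-D=K_X+\Delta$ and then never use it. More seriously, applying the expanding map $\psi^n$ to a sequence $v_i\to v_\infty$ accumulating on $\{K_X+\Delta=0\}$ spreads the resulting rays apart rather than manufacturing a new accumulation inside $\{K_X+\Delta<0\}$. Concretely, the renormalised class $\psi^n v_i/(H\cdot\psi^n v_i)$ has $(K_X+\Delta)$-value $\bigl((f^*)^n(K_X+\Delta)\cdot v_i\bigr)\big/\bigl((f^*)^nH\cdot v_i\bigr)$, and nothing forces this below a fixed negative threshold; indeed if $v_\infty$ is a $\psi$-eigenvector the limiting value is still $0$ for every $n$. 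The appeal to a single generalised eigenspace and to algebraic-integer eigenvalues does not supply the missing bound. You have correctly identified the obstacle but not removed it.

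For (2), the inductive architecture (stabilise a ray, descend through the contraction or the flip via rigidity and uniqueness, intersect the resulting finite-index submonoids) is the right shape, but the load-bearing step---that every $g\in\SEnd(X)$ maps the finite set $\Sigma$ of $K_X$-negative extremal rays to itself under $g_*$---is not established. That $g_*$ permutes \emph{all} extremal rays of $\NE(X)$ is easy, since both $g_*$ and $g^*=(\deg g)\,g_*^{-1}$ preserve $\NE(X)$; but from $K_X\cdot g_*C=g^*K_X\cdot C=(K_X-R_g)\cdot C$ one sees that $K_X$-negativity is automatic only when $R_g\cdot C\ge 0$, and the case $C\subset\Supp R_g$ with $R_g\cdot C<0$ must be handled separately. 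This is exactly where the presence of the int-amplified $f$ (not Theorem~\ref{main-thm-cri} applied to an arbitrary $g$) is genuinely used in \cite{MZ20a}. Your proposed shortcut, that ``int-amplified endomorphisms generate a finite-index submonoid of $\SEnd(X)$'', is false already on an elliptic curve $E$: translations are never int-amplified, and no submonoid of $\SEnd(E)$ excluding all translations can arise as the kernel of a homomorphism to a finite group, hence is not of finite index in the sense defined in the paper.
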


The detailed EMMP \cite[Theorem 1.2]{MZ20a} can be further described.

\begin{theorem}\label{main-thm-GMMP}
Let $f:X\to X$ be an int-amplified endomorphism of a $\mathbb{Q}$-factorial klt projective variety $X$.
Then there exist a finite-index submonoid $G$ of $\SEnd(X)$,
a $Q$-abelian variety $Y$, and a $G$-EMMP over $Y$
$$X=X_0 \dashrightarrow \cdots \dashrightarrow X_i \dashrightarrow \cdots \dashrightarrow X_r=Y$$
(i.e. $\forall$ $g \in G = G_0$ descends to $g_i \in G_i$ on each $X_i$), with every $X_i \dashrightarrow X_{i+1}$
a divisorial contraction, a flip or a Fano contraction, of a $K_{X_i}$-negative extremal ray, such that:
\begin{itemize}
\item[(1)]
There is a finite quasi-\'etale Galois cover $A \to Y$ from an abelian variety $A$
such that $G_Y := G_r$ lifts to a submonoid $G_A$ of $\textup{SEnd}(A) \le \textup{End}_{\variety}(A)$.
\item[(2)]
If $g$ in $G$ is polarized (resp. int-amplified) then so are
its descending $g_i$ on $X_i$ and the lifting to $A$ of $g_r$ on $X_r = Y$.
\item[(3)]
If $g$ in $G$ is amplified and
its descending $g_r$ on $X_r$ is int-amplified, then $g$ is also int-amplified.
\item[(4)]
If $K_X$ is pseudo-effective, then $X=Y$ and it is $Q$-abelian.
\item[(5)]
If $K_X$ is not pseudo-effective, then for each $i$, $X_i\to Y$ is equi-dimensional holomorphic with every fibre (irreducible) 
rationally connected (in the sense of \cite[IV, Definition 3.2]{Kol96}).
The $X_{r-1}\to X_r = Y$ is a Fano contraction.
\item[(6)]
For any subset $H \subseteq G$ and its descending $H_Y \le \SEnd(Y)$, and any $K = \Q$ or $K = \C$,
$H$ acts on $\NS_{K}(X)$ as commutative diagonal matrices with respect
to a suitable basis if and only if so does $H_Y$.
\end{itemize}
\end{theorem}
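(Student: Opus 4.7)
The plan is to induct on $\dim X$, running the MMP, and at each step invoking the preceding theorem to guarantee equivariance relative to a finite-index submonoid of $\SEnd(X)$. The two main external inputs are the Hashizume--Hu termination of \cite{HH20} when $K_X$ is not pseudo-effective and Gongyo's Theorem \ref{thm-gongyo} when it is; Theorem \ref{main-thm-cri} serves throughout as the bookkeeping tool for deciding which descended endomorphisms remain int-amplified.

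I would split according to whether $K_X$ is pseudo-effective. In the pseudo-effective case, the goal is to show $X$ itself is $Q$-abelian, making the EMMP trivial ($r = 0$, $X = Y$) and establishing part (4). Since $f$ is int-amplified, Theorem \ref{main-thm-cri} gives that all eigenvalues of $f^*|_{\N^1(X)}$ have modulus $>1$; pairing this with pseudo-effectivity of $K_X$ forces $K_X \equiv 0$, because otherwise the orbit $\{(f^n)^* K_X\}$ would escape bounded sets in $\PE(X)$. Gongyo's theorem then upgrades this to $K_X \sim_{\Q} 0$, and the Nakayama--Zhang style structure result for klt Calabi--Yau varieties admitting int-amplified endomorphisms identifies $X$ with a $Q$-abelian variety; the quasi-\'etale Galois cover $A \to Y$ of part (1) is then the definition, and the lifting of $G_Y$ to $G_A$ follows, after passing to iterates, from rigidity of endomorphisms on abelian varieties.

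In the non-pseudo-effective case, the Hashizume--Hu result provides a finite MMP terminating in a Fano contraction $X_{r_0-1} \to Y_0$; by the preceding theorem it is $G$-equivariant for some finite-index $G \le \SEnd(X)$. Descended endomorphisms on $Y_0$ stay int-amplified because push-forward of big classes through birational MMP steps and through Fano contractions remains big, so the criterion of Theorem \ref{main-thm-cri} transfers. Since $\dim Y_0 < \dim X$, induction produces a further EMMP from $Y_0$ to a $Q$-abelian $Y$; concatenation yields the desired sequence $X = X_0 \dashrightarrow \cdots \dashrightarrow X_r = Y$.

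The main obstacle I anticipate is part (5), specifically equi-dimensionality of each $X_i \to Y$ together with irreducible rationally connected fibers. Rational connectedness of the general fiber of a Fano contraction is classical in the sense of \cite{Kol96}, but equi-dimensionality genuinely uses the int-amplified endomorphism as a rigidity device: a jumping fiber would produce an $f$-periodic proper subvariety whose numerical class would violate the spectral-radius constraint from Theorem \ref{main-thm-cri}. Parts (2), (3), (6) should reduce to careful bookkeeping with the short exact sequence $0 \to \N^1(Y) \to \N^1(X) \to \R \to 0$ recalled in Section 2; in particular, (6) follows because $\tau^*$ is injective with a one-dimensional cokernel pinned down by the contracted extremal ray, so diagonality on $\N^1(Y)$ lifts to $\N^1(X)$. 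Part (3) is subtle since amplified alone does not force int-amplified, but the int-amplified descendant on $Y$ supplies the missing eigenvalue bounds via Theorem \ref{main-thm-cri}, applied inductively up the MMP tower.
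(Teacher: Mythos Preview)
The paper is a survey and does not prove this theorem; it simply records it as \cite[Theorem~1.2]{MZ20a} and moves on. So there is no in-paper proof to compare your proposal against. Your outline does match the architecture of the argument in \cite{MZ20a}: dichotomy on pseudo-effectivity of $K_X$, equivariance via the preceding theorem, and induction on dimension through the Fano contraction.

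That said, several steps in your sketch are loose enough to count as gaps. For part~(4), $K_X$ is not an $f^*$-eigenvector because of the ramification divisor formula $f^*K_X = K_X - R_f$, so your ``orbit escapes bounded sets'' argument does not apply as written; the correct route is to show $-K_X$ is pseudo-effective using $f^*(-K_X) = -K_X + R_f$ together with Theorem~\ref{main-thm-cri}(4), and then combine with $K_X$ pseudo-effective to force $K_X \equiv 0$. For part~(5), your heuristic that a jumping fibre would violate a spectral-radius constraint is too vague to be an argument; in \cite{MZ20a} equi-dimensionality and irreducibility of fibres are obtained via a special MRC/Chow-reduction construction specific to the int-amplified setting, not by a numerical contradiction of the sort you describe. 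For part~(6), the short exact sequence you invoke governs divisorial and Fano contractions but not flips, where $\N^1(X)$ and $\N^1(X^+)$ are identified only as vector spaces (not via a pullback $\tau^*$), so a separate argument is needed there.
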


Yoshikawa \cite[Theorem 1.3]{Yos21} recently constructed, in a sophisticated way, a new sequence of Equivariant log MMP
(called ``maximal sequence  of steps of MMP of canonical bundle formula type''), together with a new terminology ``$f$-pair'', 
providing a new perspective on the ramification divisor.
This way, he showed the following:

\begin{theorem}\label{thm-yoshikawa}
Let $f:X\to X$ be an int-amplified endomorphism of a $\Q$-factorial klt projective variety. 
Then there exists an $f$-equivariant quasi-\'etale cover $\mu:\widetilde{X}\to X$ such that 
the albanese morphism $\alb:\widetilde{X}\to A$ is a fiber space and $\widetilde{X}$ is of Fano type over $A$, 
i.e., $(X,\Delta)$ is klt and $-(K_X+\Delta)$ is ample over $A$ for some effective $\Q$-divisor $\Delta$.
\end{theorem}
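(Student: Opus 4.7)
The plan is to bootstrap from the output of Theorem~\ref{main-thm-GMMP} and promote the abelian variety $A$ appearing there into the Albanese of a suitable $f$-equivariant quasi-\'etale cover of $X$. First, after replacing $f$ by an iterate lying in the finite-index submonoid $G$, Theorem~\ref{main-thm-GMMP} furnishes an $f$-equivariant diagram
$$X = X_0 \dashrightarrow X_1 \dashrightarrow \cdots \dashrightarrow X_{r-1} \to X_r = Y,$$
whose last arrow is a Fano contraction with rationally connected fibres (item~(5)) and whose intermediate arrows are divisorial contractions or flips of $K_{X_i}$-negative extremal rays, together with a finite quasi-\'etale Galois cover $\sigma : A \to Y$ from an abelian variety such that $f$ descends to $f_Y$ on $Y$ which in turn lifts to $f_A \in \SEnd(A)$.

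Next I would construct $\widetilde{X}$ by pulling this cover back through the MMP. Choose an $f$-equivariant resolution $\varphi : W \to X$ admitting a morphism $\psi : W \to Y$ that resolves the indeterminacy of $X \dashrightarrow Y$ (replacing $f$ by a further iterate if needed), and form the normalized base change $\widetilde{W} := (W \times_Y A)^{\mathrm{norm}}$, which inherits an endomorphism from $(f_W, f_A)$ and carries morphisms to both $W$ and $A$. Let $\widetilde{X} \to X$ be extracted from the Stein factorization of $\widetilde{W} \to W \to X$; purity of the branch locus, together with the quasi-\'etale nature of $A \to Y$ and the klt hypothesis on $X$, should force this cover to be \'etale in codimension one. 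The morphism $\widetilde{W} \to A$ descends to $\widetilde{X} \to A$ after running an $f$-equivariant relative $K$-MMP to reach a suitable $\Q$-factorial klt model, giving the desired $f$-equivariant pair $(\widetilde{X} \to X,\ \widetilde{X} \to A)$.

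For the two structural claims, I would argue as follows. By item~(5) of Theorem~\ref{main-thm-GMMP}, every fibre of $X_{r-1} \to Y$ is rationally connected, so after the finite base change every fibre of $\widetilde{X}_{r-1} \to A$ is also rationally connected; since rationally connected varieties have trivial Albanese and $A$ is abelian, the universal property identifies $A$ with $\Alb(\widetilde{X}_{r-1})$, and since birational MMP steps preserve the Albanese morphism, $\Alb(\widetilde{X}) = A$ with $\widetilde{X} \to A$ a fibre space. For Fano type over $A$: the Fano contraction $X_{r-1} \to Y$ exhibits $X_{r-1}$ as being of Fano type over $Y$; this property is preserved under the quasi-\'etale base change $A \to Y$, and propagates backward through the remaining MMP steps since relative Fano type is stable under inverses of $(K+\Delta)$-MMP operations. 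The main obstacle in turning this sketch into a proof is the construction step: guaranteeing simultaneously that $\widetilde{X} \to X$ is quasi-\'etale, $f$-equivariant, and that the base change is compatible with the MMP. A naive fibre product can acquire ramification along exceptional and flipped loci, and the equivariance under $f$ must be tracked through each birational step. Yoshikawa's introduction of $f$-pairs and his ``maximal sequence of steps of MMP of canonical bundle formula type'' is precisely the device for promoting the ramification divisor $R_f$ of $f$ to a canonical boundary datum which remains compatible throughout the program, and I expect this bookkeeping --- rather than the pure MMP or pure cover constructions in isolation --- to be the technical heart of the argument.
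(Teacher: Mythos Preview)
The paper does not prove this theorem; it is quoted from Yoshikawa \cite{Yos21}, and the only indication of method is the sentence preceding the statement: Yoshikawa builds a \emph{new} equivariant log MMP (``maximal sequence of steps of MMP of canonical bundle formula type'') together with the notion of an $f$-pair, which packages the ramification divisor as a boundary. So there is no in-paper proof to match, but your outline diverges from Yoshikawa's strategy and carries a real gap.

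The gap is in the Fano-type step, not where you think it is. You write that ``relative Fano type is stable under inverses of $(K+\Delta)$-MMP operations'' and use this to pass the Fano-type property from $X_{r-1}$ back to $X_0=X$. This is fine for flips (small $\Q$-factorial modifications preserve Fano type in both directions), but it is \emph{not} a standard fact for divisorial contractions: if $\pi\colon X_i\to X_{i+1}$ blows down a divisor and $X_{i+1}$ is of Fano type over $Y$, there is no general mechanism producing a klt boundary $\Delta_i$ on $X_i$ with $-(K_{X_i}+\Delta_i)$ ample over $Y$. Item~(5) of Theorem~\ref{main-thm-GMMP} gives you only that the fibres of $X\to Y$ are rationally connected, which is strictly weaker than Fano type over $Y$; bridging that gap is precisely the content of Yoshikawa's theorem, not an input to it. His $f$-pair formalism is what manufactures the boundary $\Delta$ from the ramification divisor of $f$ and keeps it compatible along a specially designed MMP; it is the substance of the argument, not bookkeeping.

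Conversely, the part you flag as the ``main obstacle'' is lighter than you suggest. By item~(5) of Theorem~\ref{main-thm-GMMP} the map $X\to Y$ is already an equidimensional \emph{morphism}, so no resolution $W$ is needed: the normalised fibre product $\widetilde{X}:=(X\times_Y A)^{\mathrm{norm}}$ directly gives an $f$-equivariant quasi-\'etale cover of $X$ (quasi-\'etaleness of $A\to Y$ pulls back along an equidimensional morphism to a normal variety), and the Albanese identification follows from the rationally connected fibres as you say. So your sketch does produce the cover and the fibre-space structure; what it does not produce is a reason why $\widetilde{X}$ is of Fano type over $A$.
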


\begin{remark}
We refer to \cite{NZ10, Zha10, MZ18, Men20, MZ20a} for historical notes.
We also refer to \cite{CMZ20, MZ20b, Zho21} for int-amplified EMMP in the setting of positive characteristics or K\"ahler spaces.

\end{remark}

\section{Non-isomorphic EMMP for projective surfaces}

Nakayama \cite{Nak02} first showed the finiteness of negative curves on a smooth projective surface 
admitting a non-isomorphic surjective endomorphism.
This result can be further generalised to contractible curves on normal projective surfaces, see \cite[Lemma 4.3]{MZ22}.
Below is a beautiful result of Wahl \cite[Theorem 2.8]{Wah90}, which has been further generalized to higher dimensional cases, see Section \ref{sec-lcy}.

\begin{theorem}\label{thm-wah}
Let $X$ be a normal projective surface admitting a non-isomorphic surjective endomorphism. 
Then $X$ has at worst log canonical singularities.
In particular, the canonical divisor $K_X$ is $\Q$-Cartier.
\end{theorem}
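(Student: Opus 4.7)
The plan is to work locally at each singular point of $X$; the ``in particular'' clause on $\Q$-Cartierness of $K_X$ then comes for free, since every log canonical normal surface singularity is automatically $\Q$-Gorenstein (this follows from their classification as quotient, simple elliptic, or cusp singularities). Since $\mathrm{Sing}(X)$ is a finite set and $f$ is a finite surjective morphism, by a standard iteration argument (replacing $f$ by a sufficiently high power so that each singular point becomes periodic, then fixed) we may assume $f(p) = p$ for a chosen singular point $p$. Arguing by contradiction, I suppose $(X,p)$ is not log canonical and seek to contradict $\deg f \ge 2$.

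First I would take the minimal resolution $\pi \colon \widetilde{X} \to X$, with reduced exceptional divisor $E = \bigcup_i E_i$ over $p$. By the classical uniqueness of the minimal resolution of a normal surface (no $(-1)$-curve lies in the exceptional locus), $f$ lifts uniquely to a surjective endomorphism $\widetilde{f}\colon \widetilde{X} \to \widetilde{X}$ with $\pi \circ \widetilde{f} = f \circ \pi$. Passing to a further iterate, I may assume $\widetilde{f}(E_i) = E_i$ for every component $E_i$. Each restriction $\widetilde{f}|_{E_i}$ is then either constant or a surjective self-endomorphism of the proper curve $E_i$; in the latter case, the Riemann--Hurwitz formula forces $E_i$ to have arithmetic genus $\le 1$, and if the genus equals $1$ then $\widetilde{f}|_{E_i}$ is unramified.

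Next I would exploit the ramification identity $K_{\widetilde{X}} = \widetilde{f}^*K_{\widetilde{X}} + R$ with $R \ge 0$, intersected with each $E_i$. This yields a linear system in the self-intersections $E_i^2$, the adjacency numbers $E_i\cdot E_j$, the genera $g(E_i)$, and the local degrees of $\widetilde{f}$ along $E_i$ and transverse to $E_i$. I would then claim that any non-lc configuration violates this system: if the dual graph falls outside the lc list (cyclic, dihedral, tetrahedral, octahedral, icosahedral quotient graphs, simple elliptic, cusp), then either some $E_i$ of genus $\ge 2$ or a rational $E_i$ with $E_i^2$ too negative is forced, or a branching vertex with too many neighbours is forced. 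In every such case, the intersection equations above require either that some $E_i$ be contracted by $\widetilde{f}$ (impossible, since $\widetilde{f}$ preserves $E_i$ and $\widetilde{X}$ is smooth with $E_i$ proper) or that $R$ vanish along a whole neighbourhood of $E$, so that $\widetilde{f}$ is étale there, hence locally an isomorphism, contradicting $\deg f \ge 2$.

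The main obstacle is the final case-by-case analysis: one must verify the numerical incompatibility for every dual graph outside the lc classification, using the precise constraints coming from $\widetilde{f}$-invariance, the bounds on $g(E_i)$, and the sign of $R\cdot E_i$. A secondary subtlety is that, a priori, without log canonicity one cannot freely manipulate $\pi^*K_X$ or invoke discrepancies; this is why the argument is phrased entirely on $\widetilde{X}$ via the ramification formula for $\widetilde{f}$, and the $\Q$-Gorenstein conclusion is extracted only at the end from the classification of log canonical surface singularities.
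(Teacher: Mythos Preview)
The paper does not give its own proof of this statement; it simply cites Wahl \cite[Theorem~2.8]{Wah90}. Wahl's method is quite different from what you propose. He attaches to each normal surface germ $(X,p)$ a non-negative rational invariant (his ``characteristic number'' of the link, read off from the intersection matrix and genera of the exceptional curves on the minimal good resolution) which vanishes precisely when $(X,p)$ is log canonical, and he proves an inequality comparing the invariants of the source and target of any finite map of germs: under a degree-$n$ cover one side is at least $n$ times the other. Applied to a self-map of degree $n\ge 2$, this forces the invariant to be $0$, hence $(X,p)$ is log canonical. No lift of $f$ to a fixed resolution is ever needed.

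Your outline has a genuine gap at the lifting step. You claim that $f$ lifts to an endomorphism $\widetilde f\colon\widetilde X\to\widetilde X$ of the minimal resolution ``by the classical uniqueness of the minimal resolution''. But that uniqueness statement concerns \emph{resolutions}, i.e.\ proper \emph{birational} morphisms to $X$: every resolution dominates the minimal one. The composite $f\circ\pi\colon\widetilde X\to X$ has degree $\deg f>1$, so it is not a resolution of $X$ at all, and the universal property of $\widetilde X$ gives no factorisation of $f\circ\pi$ through $\pi$. If instead you resolve the indeterminacy of the rational self-map $\pi^{-1}\circ f\circ\pi$, the resulting smooth model typically strictly dominates $\widetilde X$, and the induced generically finite morphism to $\widetilde X$ can contract curves (its Stein factorisation may well pass through a singular surface finite over $\widetilde X$). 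Consequently your later assertion that $\widetilde f$ cannot contract any $E_i$ is also unsupported; the reason you offer, ``$\widetilde f$ preserves $E_i$'', is exactly the point in dispute. Without a \emph{finite} self-map on a single resolution, the ramification-versus-intersection bookkeeping you sketch cannot get started. This is precisely the difficulty that Wahl's invariant-theoretic argument sidesteps.
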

 
Using the above result, we can show the following (see \cite[Theorem 4.7]{MZ22}).

\begin{theorem}\label{thm-emmp-sur}
	Let $X$ be a normal projective surface admitting a non-isomorphic surjective endomorphism. 
	Then any MMP starting from $X$ is $G$-equivariant for some finite-index submonoid $G$ of $\SEnd(X)$.
\end{theorem}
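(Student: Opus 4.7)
The plan is to run the MMP on $X$ stepwise, descending endomorphisms at each divisorial contraction to a finite-index submonoid of $\SEnd(X)$, and then iterate on the resulting surface.

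By Theorem~\ref{thm-wah}, $X$ has at worst log canonical singularities, so $K_X$ is $\Q$-Cartier and an MMP on $X$ is defined. Since $\dim X=2$, no flipping contraction can occur --- the exceptional locus of any projective birational morphism between normal surfaces is either empty or a divisor --- so every MMP step is either a divisorial contraction $\pi:X\to X'$ or a Mori fiber space. It therefore suffices to show that a single divisorial contraction $\pi$ is $G$-equivariant for some finite-index $G\le\SEnd(X)$: one then iterates, noting that $\rho(X')=\rho(X)-1$, that $\deg g'=\deg g$ so non-isomorphic endomorphisms descend to non-isomorphic endomorphisms, and that $X'$ inherits the hypothesis of the theorem.

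To produce $G$, I invoke the finiteness of contractible curves: by \cite[Lemma~4.3]{MZ22} (generalizing \cite{Nak02} to the normal setting), the set $\mathcal{C}$ of irreducible contractible curves on $X$ is finite and contains every generator of a $K_X$-negative extremal ray. Each $g\in\SEnd(X)$ induces an invertible operator $g^*\in\GL(\N^1(X))$ (invertible since $g_*g^*=(\deg g)\id$) preserving the pseudo-effective cone, and hence preserving the sublattice of $\N^1(X)$ spanned by the classes of curves in $\mathcal{C}$. One verifies that, up to passing to iterates, the action of $g^*$ on this finite-rank lattice permutes the finite set of classes $\{[C] : C\in\mathcal{C}\}$, so that the action factors through a finite group of permutations of $\mathcal{C}$; taking the preimage of the identity yields a finite-index submonoid $G\le\SEnd(X)$ each of whose elements fixes every $C\in\mathcal{C}$ setwise.

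Given the contraction $\pi:X\to X'$ of some $C\in\mathcal{C}$ and any $g\in G$, we have $g(C)=C$, so $\pi\circ g$ is constant on $C$; by the universal property of contractions (and normality of $X'$) it factors uniquely as $g'\circ\pi$ for a surjective endomorphism $g'\in\SEnd(X')$, and the rule $g\mapsto g'$ is a monoid homomorphism. Induction on $\rho(X)$ completes the proof. The main obstacle is the construction of $G$ --- specifically, verifying that the $\SEnd(X)$-action on the lattice spanned by the classes of $\mathcal{C}$ factors through a finite group --- which uses both the log canonicity provided by Theorem~\ref{thm-wah} and the Nakayama-type finiteness in an essential way. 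Once this is in place, the rest of the argument is a routine descent.
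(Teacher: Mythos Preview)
Your outline matches the paper's route via \cite[Theorem~4.7]{MZ22}: Theorem~\ref{thm-wah} gives lc singularities so the MMP runs, \cite[Lemma~4.3]{MZ22} makes $\mathcal{C}$ finite, and you descend through a finite-index $G$ by induction on $\rho$. The reduction to a single divisorial contraction and the descent itself are fine.

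The construction of $G$, however, is not correct as written. First, ``$g^*$ preserves $\PE(X)$, hence preserves the sublattice spanned by $\mathcal{C}$'' is a non sequitur: preserving a cone says nothing about any particular sublattice. Second, $g^*$ does \emph{not} permute the set of classes $\{[C]:C\in\mathcal{C}\}$ --- if $g^{-1}(C)=C'$ set-theoretically then $g^*[C]=a[C']$ with $a$ the ramification multiplicity, typically $a>1$, so $g^*$ scales rather than permutes. What you actually need is that $g^{-1}$ permutes $\mathcal{C}$ as a set of \emph{curves}, and this is seen geometrically rather than through $\N^1(X)$: for $C$ with $C^2<0$ and any irreducible component $C'$ of $g^{-1}(C)$, pairing $C'$ against $g^*C$ and using $g_*C'\in\Z_{>0}\cdot C$ together with $C'\cdot C_j\ge 0$ for the remaining components $C_j$ forces $(C')^2<0$, so $C'\in\mathcal{C}$. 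Since $\mathcal{C}$ is finite and the sets $g^{-1}(C)$ are pairwise disjoint as $C$ ranges over $\mathcal{C}$, pigeonhole makes each $g^{-1}(C)$ irreducible and $C\mapsto g^{-1}(C)$ a bijection of $\mathcal{C}$. This yields the monoid homomorphism $\SEnd(X)\to\operatorname{Sym}(\mathcal{C})$ whose kernel is your $G$. With this correction in place the argument is complete and agrees with the paper's reference.
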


Based on the above theorem, the structural result \cite[Theorem 1.1]{JXZ23} can be given.  
We refer to \cite{Fuj12} for the Minimal Model Program of log canonical projective surfaces.

\begin{theorem}\label{thm-surf}
	Let $f : X \to X$ be a non-isomorphic surjective endomorphism
	of a normal projective surface.
	Then $X$ has only log canonical (lc) singularities.
	If the canonical divisor $K_X$ is pseudo-effective, then Case~(\ref{thm:main:qe}) below occurs.
	If $K_X$ is not pseudo-effective,
	replacing $f$ by an iteration,
	we may run an $f$-Equivariant Minimal Model Program (EMMP)
	\[
		X = X_1 \longrightarrow \cdots \longrightarrow X_j \longrightarrow \cdots \longrightarrow X_r \longrightarrow Y,
	\]
	contracting $K_{X_j}$-negative extremal rays,
	with $X_j \to X_{j+1}$ ($j < r$) being divisorial and $X_r \to Y$ being Fano contraction,
	such that one of the following cases occurs.
	\begin{enumerate}
		\item \label{thm:main:qe}
		      $f$ is quasi-{\'e}tale, i.e., {\'e}tale in codimension $1$;
		      there exists an $f$-equivariant quasi-{\'e}tale finite Galois cover
		      $\nu : V \to X$ with $V$ smooth as in \cite[Theorem 2.14]{JXZ23} and the lifting $f|_V$ of $f$ is \'etale.
		\item \label{thm:main:finite-ord}
		      $Y$ is a smooth projective curve of genus $g(Y) \geq 1$;
		      $f$ descends to an automorphism of finite order on the curve $Y$.
		\item \label{thm:main:smooth}
		      $Y$ is an elliptic curve;
		      $X \to Y$ is a $\mathbb{P}^1$-bundle.
		\item \label{thm:main:prdt}
		      $Y \cong \mathbb{P}^1$;
		      there is an $f|_{X_r}$-equivariant finite surjective morphism
		      $X_r \to Y \times \mathbb{P}^1$.
		\item \label{thm:main:cover}
		      $Y \cong \mathbb{P}^1$;
		      $f$ is polarized;
		      $K_X + S \sim_{\mathbb{Q}} 0$ for an $f^{-1}$-stable reduced divisor $S$;
		      there is an $f$-equivariant quasi-{\'e}tale finite Galois cover $\nu : V \to X$
		      where $V$ is a ruled surface over an elliptic curve and $\nu^* S$ is a disjoint union of two cross-sections (and the lifting of $f$ is \'etale outside $\nu^* S$).
		\item \label{thm:main:base-change}
		      $Y \cong \mathbb{P}^1$;
		      $f$ is polarized;
		      there exists an equivariant commutative diagram:
		      \[
			      \xymatrix{
			      \widetilde{f} \acts \widetilde{X} \ar@<2.3ex>[d]_{\widetilde{\pi}} \ar[r]^{\mu_{X}}	&	X \ar@<-2.3ex>[d]^{\pi} \racts f	\\
			      g_E \acts E \ar[r]												&	Y \racts g							\\
			      }
		      \]
		      here $E$ is an elliptic curve;
		      $\widetilde{X}$ is the normalisation of $X \times_Y E$;
		      $\widetilde{\pi}$ is a $\mathbb{P}^1$-bundle;
		      $\pi$ is a $\mathbb{P}^1$-fibration;
		      $\widetilde{f}$ and $g_E$ are finite surjective endomorphisms;
		      $\mu_X$ is quasi-{\'e}tale.
		\item \label{thm:main:fano-type}
		      $Y \cong \mathbb{P}^1$;
		      $X_r$ is of Fano type;
		      there is an $f|_{X_r}$-equivariant birational morphism $X_r \to \overline{X}$
		      to a klt Fano surface with Picard number $\rho(\overline{X}) = 1$;
		      every $X_j$ ($1 \leq j \leq r$) is a rational surface
		      whose singularities are klt (hence $\mathbb{Q}$-factorial).
		\item \label{thm:main:proj-cone}
		      $Y$ is a point;
		      $X_r$ is a projective cone over an elliptic curve $E$;
		      the normalisation $\Gamma$ of the graph of $X \dashrightarrow E$ is a $\mathbb{P}^1$-bundle,
		      and $f$ lifts to $f|_{\Gamma}$
		      such that $(f|_{\Gamma})^* |_{\N^1(\Gamma)} = \delta_f \id$.
		\item \label{thm:main:fano}
		      $Y$ is a point and hence $\rho(X_r)=1$;
		      $-K_{X_r}$ is ample;
		      every $X_j$ ($1 \leq j \leq r$) is a rational surface
		      whose singularities are lc and rational (hence $\mathbb{Q}$-factorial).
	\end{enumerate}
\end{theorem}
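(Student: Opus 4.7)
The strategy is to apply Theorem \ref{thm-wah} to get that $X$ has only log canonical singularities, so $K_X$ is $\mathbb{Q}$-Cartier and the ramification divisor $R_f \ge 0$ defined by $f^*K_X = K_X + R_f$ is well-defined, and then split according as $K_X$ is pseudo-effective or not. In the pseudo-effective case, I would use abundance for lc surfaces (\cite{Fuj12}) together with the iterated ramification relation $(f^n)^*K_X = K_X + \sum_{i=0}^{n-1}(f^i)^*R_f$ to force $R_f = 0$: the fixed parts of $|m(f^n)^*K_X|$ would grow uncontrollably under pullback unless $R_f = 0$, and a Zariski-decomposition or volume comparison then rules out any nonzero $R_f$. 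Hence $f$ is quasi-\'etale, and the equivariant smooth quasi-\'etale Galois cover $\nu : V \to X$ of Case (\ref{thm:main:qe}) is supplied by \cite[Theorem 2.14]{JXZ23}, with $f|_V$ \'etale by purity of branch locus.

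If $K_X$ is not pseudo-effective, I would replace $f$ by an iterate and apply Theorem \ref{thm-emmp-sur} to obtain an $f$-equivariant MMP $X = X_1 \to \cdots \to X_r \to Y$; since surfaces admit no flips, each intermediate $X_j \to X_{j+1}$ is a divisorial contraction, and the terminal morphism $X_r \to Y$ is a Fano contraction with $\dim Y \in \{0, 1\}$. The remaining work is a detailed case split. When $\dim Y = 1$: if $g(Y) \ge 2$, the map $f|_Y$ is necessarily an automorphism of finite order, giving Case (\ref{thm:main:finite-ord}); if $g(Y) = 1$, then either $f|_Y$ has finite order after further iteration (still Case (\ref{thm:main:finite-ord})) or it is a nontrivial isogeny, in which case an expansion-versus-bounded-orbit argument on exceptional curves precludes any divisorial contractions and forces $X = X_r$ to be a smooth $\mathbb{P}^1$-bundle, yielding Case (\ref{thm:main:smooth}); if $Y \cong \mathbb{P}^1$, then $f|_Y$ is a polarized endomorphism and its totally ramified fixed points and branch profile separate Cases (\ref{thm:main:prdt}), (\ref{thm:main:cover}), and (\ref{thm:main:base-change}), with the last case requiring the construction of an elliptic base change $E \to Y$ that trivializes the $\mathbb{P}^1$-fibration. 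When $\dim Y = 0$, one has $\rho(X_r) = 1$ and $-K_{X_r}$ ample; the singularity type of $X_r$ (klt, lc with only rational singularities, or lc with an elliptic non-klt locus) then distinguishes Cases (\ref{thm:main:fano-type}), (\ref{thm:main:fano}), and (\ref{thm:main:proj-cone}) respectively.

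The hardest parts, in my view, will be Cases (\ref{thm:main:base-change}) and (\ref{thm:main:proj-cone}). In Case (\ref{thm:main:base-change}), constructing the elliptic cover $E \to Y$ equivariantly requires matching the branch behavior of $f|_Y$ on $\mathbb{P}^1$ against the monodromy of $X_r \to Y$, so that the normalization of $X_r \times_Y E$ becomes a $\mathbb{P}^1$-bundle carrying a lifted surjective endomorphism. In Case (\ref{thm:main:proj-cone}), identifying $X_r$ as the projective cone over an elliptic curve amounts to extracting a rational map $X_r \dashrightarrow E$ from the non-klt locus of $X_r$ and then upgrading the normalization of its graph $\Gamma$ to a $\mathbb{P}^1$-bundle on which $(f|_\Gamma)^*$ acts as the scalar $\delta_f \cdot \id$ on $\N^1(\Gamma)$. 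A further bookkeeping issue running throughout is that all the iterations and covers selected case by case must remain compatible with a single finite-index submonoid of $\SEnd(X)$, which is guaranteed by Theorem \ref{thm-emmp-sur} but requires consistent tracking.
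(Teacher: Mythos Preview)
The paper does not itself prove this theorem; it quotes it from \cite[Theorem~1.1]{JXZ23}, having set up Theorems~\ref{thm-wah} and~\ref{thm-emmp-sur} as the prerequisites. Your skeleton (lc singularities via Wahl, quasi-\'etale when $K_X$ is pseudo-effective, EMMP plus case analysis on $Y$ otherwise) is the expected one and matches what the survey indicates.

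There is, however, a concrete error in your case split. You assign Case~(\ref{thm:main:fano-type}) to the situation $\dim Y = 0$, but in the statement this case has $Y \cong \mathbb{P}^1$: the content is that $X_r$ is of Fano type and admits a \emph{further} $f|_{X_r}$-equivariant birational contraction to a klt Fano surface $\overline{X}$ of Picard number~$1$, not that the EMMP itself terminates with $Y$ a point. Only Cases~(\ref{thm:main:proj-cone}) and~(\ref{thm:main:fano}) have $Y$ a point, so your trichotomy ``klt / lc rational / lc with elliptic non-klt locus'' for $\dim Y = 0$ has one branch too many, and conversely your treatment of $Y \cong \mathbb{P}^1$ omits Case~(\ref{thm:main:fano-type}) entirely. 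The actual organization over $\mathbb{P}^1$ in \cite{JXZ23} turns first on whether all the $X_j$ are klt (leading to the Fano-type Case~(\ref{thm:main:fano-type})) and, in the remaining situations, on the structure of the $f^{-1}$-invariant boundary and of the fibration $X_r \to Y$, rather than solely on the branch profile of $f|_Y$. Relatedly, your blanket claim that $f|_Y$ is polarized whenever $Y \cong \mathbb{P}^1$ is unjustified: $\deg(f|_Y)$ may well be~$1$, and the theorem asserts that $f$ is polarized only in Cases~(\ref{thm:main:cover}) and~(\ref{thm:main:base-change}).
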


\begin{remark}
We refer to \cite{Nak02} for the precise list of birationally ruled surfaces which admit non-isomorphic surjective endomorphisms.
\end{remark}

\section{\'Etale EMMP for smooth projective threefolds}

Let $f:X\to X$ be a surjective endomorphism of a normal projective variety.
We have the ramification divisor formula
$$K_X=f^*K_X+R_f$$
where $R_f$ is the ramification divisor.
Recall that $f$ is quasi-\'etale or \'etale in codimension one if $R_f=0$.
Note that $K_X$ being pseudo-effective implies that $f$ is quasi-\'etale.
If $X$ is smooth and $f$ is quasi-\'etale, then $f$ is further \'etale by the purity of branch loci.

When the Kodaira dimension $\kappa(X)>0$, we can apply the equivariant Iitaka fibration \cite[Theorem A]{NZ09}.
\begin{theorem}
Let $f:X\dashrightarrow X$ be a dominant self-map of a smooth projective variety (or a compact K\"ahler manifold) $X$.
Then the Iitaka fibration $\phi:X\dashrightarrow Y$ is $f$-equivariant and $f|_Y$ is an automorphism of finite order.
\end{theorem}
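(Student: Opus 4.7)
The plan is to use the pullback action of $f$ on pluricanonical sections and then exploit that the Iitaka base is of general type to force finite order.

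First I would set up a pluricanonical representation of $f$. Resolving the indeterminacy, write $f = q \circ p^{-1}$ with $p, q : \widetilde{X} \to X$ honest morphisms, $p$ birational, and $\widetilde{X}$ a smooth projective variety (a bimeromorphic resolution in the K\"ahler case). Since $p$ is birational between smooth spaces, $K_{\widetilde{X}} = p^* K_X + E$ with $E \ge 0$ $p$-exceptional, so $p_* : H^0(\widetilde{X}, m K_{\widetilde{X}}) \xrightarrow{\sim} H^0(X, m K_X)$ is an isomorphism. The ramification identity $K_{\widetilde{X}} = q^* K_X + R_q$ with $R_q \ge 0$ gives an inclusion $q^* H^0(X, m K_X) \subseteq H^0(\widetilde{X}, m K_{\widetilde{X}})$, and setting $f^* := p_* \circ q^*$ produces a linear endomorphism of the finite-dimensional space $V_m := H^0(X, m K_X)$; it is injective, hence bijective, because $f$ is dominant.

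Next I would descend this action to $Y$. For a sufficiently divisible $m$, the pluricanonical rational map $\phi_m : X \dashrightarrow \PP(V_m^\vee)$ has image $Y_m$ of dimension $\kappa(X)$ birational to the Iitaka base, and I fix $Y = Y_m$. The linear automorphism of $V_m$ given by $f^*$ projectivizes to an automorphism $\Phi$ of $\PP(V_m^\vee)$; by the construction of $f^*$ via pullback of sections one has $\Phi \circ \phi_m = \phi_m \circ f$, so $\Phi(Y) = Y$ and $f|_Y := \Phi|_Y$ is a biregular automorphism of $Y$, making $\phi$ an $f$-equivariant rational map.

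Finally I would show $f|_Y$ has finite order. Any smooth projective birational model $\overline{Y}$ of $Y$ satisfies $\kappa(\overline{Y}) = \dim \overline{Y} = \kappa(X)$, so $\overline{Y}$ is of general type. A classical finiteness theorem (the faithful pluricanonical representation of $\Bir(\overline{Y})$ lands in the finite projective stabilizer of the canonical image; equivalently, $\Bir(\overline{Y}) = \Aut(\overline{Y}_{\can})$ is finite via BCHM in the projective setting) asserts that $\Bir(\overline{Y})$ is finite. Since $\Aut(Y) \hookrightarrow \Bir(\overline{Y})$ by composition with the resolution, the cyclic subgroup generated by $f|_Y$ is finite, as desired. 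The hard part will be treating the K\"ahler case cleanly, where one must appeal to Ueno's extension of Iitaka theory to compact K\"ahler manifolds to ensure the base can be taken projective of general type; the first two steps are formal manipulations with pluricanonical sections, while the deepest input is the classical $\Bir$-finiteness for general-type varieties used in the last paragraph.
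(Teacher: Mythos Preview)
Your first two steps are fine and match the standard setup: the action of $f^*$ on $V_m = H^0(X, mK_X)$ is well-defined and invertible, and it projectivises to an automorphism of $\PP(V_m^\vee)$ preserving the Iitaka image $Y$. The gap is in the third step. The assertion that a smooth model $\overline{Y}$ of the Iitaka base has $\kappa(\overline{Y}) = \dim \overline{Y}$ is \emph{false} in general. For a concrete counterexample, take a relatively minimal elliptic surface $\pi : X \to \PP^1$ with $\chi(\OO_X) \ge 3$ and no multiple fibres; Kodaira's formula gives $K_X = \pi^* \OO_{\PP^1}(\chi(\OO_X)-2)$ with $\chi(\OO_X)-2>0$, so $\kappa(X)=1$ and every pluricanonical map factors through $\pi$, whence $Y \cong \PP^1$. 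But $\kappa(\PP^1)=-\infty$ and $\Bir(\PP^1)=\Aut(\PP^1)=\PGL_2$ is infinite, so your appeal to finiteness of $\Bir(\overline{Y})$ collapses. What \emph{is} true is that the pair $(Y,\Delta_Y)$ arising from the canonical bundle formula is of log general type, but you neither construct $\Delta_Y$ nor verify that $f|_Y$ preserves it, and doing so is not a formality.

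The paper itself does not give a proof but cites \cite[Theorem~A]{NZ09}, whose argument is of a different nature. Rather than analysing the geometry of $Y$, Nakayama and Zhang extend the Nakamura--Ueno--Deligne finiteness of the pluricanonical representation to dominant (not merely birational) self-maps. Concretely, the $L^{2/m}$-mass $I(s)=\int_X |s|^{2/m}$ on $V_m$ satisfies $I(f^*s)=(\deg f)\cdot I(s)$, which forces all eigenvalues of $f^*|_{V_m}$ to share the same modulus and places $[f^*]\in\PGL(V_m)$ inside a compact subgroup; combined with the graded-ring structure of $\bigoplus_m V_m$ one deduces that $[f^*]$, and hence $f|_Y$, has finite order. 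The finiteness is thus extracted from analysis on $X$ rather than from any general-type property of $Y$.
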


{\bf In the following, we focus on a smooth projective threefold $X$.}

Suppose $\kappa(X)\ge 0$.
This is equivalent to $K_X$ being pseudo-effective by the abundance for threefolds (cf. \cite[Remark 3.13]{KM98}).
Moreover, $X$ has no Fano contraction and the first birational contraction, which exists when $K_X$ is not nef, 
has to be a divisorial contraction (cf. \cite[Theorem 3.3]{Mor82}).
In particular, the closed cone
$\NE(X)$ of effective $1$-cycles has only finitely many $K_X$-negative extremal rays (cf. \cite[Proposition 4.6]{Fuj02}).
With the help of non-isomorphic \'etale dynamics, Fujimoto \cite[Theorem 4.8]{Fuj02} further showed 
that the divisorial contraction has to be (the inverse of) the blowup along an elliptic curve.
In particular, one can run the following smooth EMMP.
 
\begin{theorem}
Let $f:X\to X$ be a non-isomorphic \'etale endomorphism of a smooth projective threefold $X$ with $\kappa(X)\ge 0$.
Then after iteration of $f$, any MMP starting from $X$ is $f$-equivariant and involves only divisorial contractions 
which are (the inverse of) the blowups along elliptic curves.
\end{theorem}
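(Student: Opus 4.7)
The plan is to combine the three ingredients displayed just before the statement: the pseudo-effectivity of $K_X$ (which forces every extremal contraction in any MMP from $X$ to be birational and in fact divisorial), the finiteness of $K_X$-negative extremal rays of $\NE(X)$ coming from \cite[Proposition 4.6]{Fuj02}, and Fujimoto's description \cite[Theorem 4.8]{Fuj02} of such contractions in the presence of a non-isomorphic \'etale endomorphism. Equivariance will be obtained by a permutation argument on the finite set of extremal rays, and the whole MMP is then handled by induction on the Picard number.

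First I would set up the equivariance. Since $f$ is \'etale, one has $R_f=0$ and the ramification formula yields $f^{\ast}K_X\equiv K_X$. By the projection formula $K_X\cdot f_\ast C=f^\ast K_X\cdot C=K_X\cdot C$, so $f_\ast$ preserves the sign of $K_X$-intersection on $\NE(X)$; since $f$ is finite, $f_\ast$ is a bijection of $\NE(X)$, hence permutes the finite set $\{R_1,\dots,R_s\}$ of $K_X$-negative extremal rays. Replacing $f$ by a suitable iterate I may assume $f_\ast R_j=R_j$ for every $j$. Fixing one such ray $R_j$ and letting $\pi\colon X\to X'$ be its contraction, the fact that $f$ sends curves of class $R_j$ to curves of class $R_j$ shows that $f$ sends $\pi$-fibers to $\pi$-fibers, so the rigidity lemma provides a descent $f'\colon X'\to X'$ with $\pi\circ f=f'\circ\pi$.

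Next I would invoke Fujimoto's theorem to describe $\pi$ geometrically. Since $X$ is a smooth projective threefold with $K_X$ pseudo-effective and admits a non-isomorphic \'etale endomorphism, \cite[Theorem 4.8]{Fuj02} guarantees that $\pi$ is the blow-up of $X'$ along a smooth elliptic curve $C'\subset X'$; in particular $X'$ is again a smooth projective threefold. To iterate on $(X',f')$, I need $f'$ to be again non-isomorphic \'etale and $\kappa(X')\ge 0$. Non-isomorphy follows from $\deg f'=\deg f\ge 2$, and $\kappa(X')\ge\kappa(X)\ge 0$ is standard under birational morphisms to smooth varieties. For the \'etale property, restrict to the big open $U:=X'\setminus C'$: there $\pi$ is an isomorphism and $f|_{\pi^{-1}(U)}$ is \'etale, so $R_{f'}|_U=0$; but $R_{f'}$ is a Weil divisor on the smooth $X'$ whose support must then lie in the codimension-two locus $C'$, forcing $R_{f'}=0$. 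Thus $f'$ is quasi-\'etale, hence \'etale by the purity of the branch locus.

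The induction therefore continues, and since each divisorial contraction strictly decreases the Picard number, the MMP terminates after at most $\rho(X)-1$ steps. The main obstacle I anticipate is the bookkeeping of successive iterates: at each step a new finite set of extremal rays appears on the new threefold, and one must pass to a further power of $f$ to stabilize them before descending. Since the number of steps is bounded by $\rho(X)-1$, a single common iterate of $f$ makes every contraction of any chosen MMP simultaneously equivariant, and combined with Fujimoto's description of the individual steps this yields the theorem.
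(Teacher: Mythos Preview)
Your proposal is correct and follows essentially the same route as the paper, which attributes the theorem to Fujimoto and sketches exactly these ingredients in the paragraph preceding the statement: pseudo-effectivity of $K_X$ rules out Fano contractions, Mori's classification \cite{Mor82} rules out small contractions from a smooth threefold, \cite[Proposition~4.6]{Fuj02} gives finitely many $K_X$-negative rays (permuted by $f_\ast$, hence fixed after iteration), and \cite[Theorem~4.8]{Fuj02} identifies each contraction as a blow-up along an elliptic curve, keeping $X'$ smooth so the induction runs. One small imprecision to clean up: in your \'etale-descent step, $f$ need not carry $\pi^{-1}(U)$ into itself, so the assertion $R_{f'}|_U=0$ should be argued on $U\cap f'^{-1}(U)$ instead; since $f'$ is finite (any surjective endomorphism of a projective variety is), the complement $C'\cup f'^{-1}(C')$ is still of codimension~$2$, and the conclusion $R_{f'}=0$ follows as you intended.
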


Furthermore, Nakayama and Fujimoto \cite[Main Theorem]{FN07} completely classified smooth projective threefolds 
which have non-negative Kodaira dimension and admit non-isomorphic surjective endomorphisms.

\begin{theorem}
Let $X$ be a smooth projective threefold with pseudo-effective $K_X$. 
Then the following conditions are equivalent to each other:
\begin{itemize}
\item[(A)] $X$ admits a non-isomorphic surjective endomorphism.
\item[(B)] There exist a finite \'etale Galois covering $\tau:\widetilde{X}\to X$ and 
an abelian scheme structure $\varphi:\widetilde{X}\to T$ over a variety $T$ of dimension $\le 2$ such that the
Galois group $\Gal(\tau)$ acts on $T$ and $\varphi$ is $\Gal(\tau)$-equivariant.
\end{itemize}
\end{theorem}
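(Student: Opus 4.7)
The implication (B)$\Rightarrow$(A) is the easy direction: for any $n\ge 2$ the multiplication-by-$n$ map on the abelian scheme $\widetilde{X}\to T$ is \'etale of degree $n^{2(3-\dim T)}>1$, commutes with every group-scheme automorphism and hence with $\Gal(\tau)$, and therefore descends to a non-isomorphic surjective endomorphism of $X$. The substance lies in (A)$\Rightarrow$(B). The first step is to observe that $K_X$ being pseudo-effective forces the ramification divisor to vanish in the formula $K_X=f^*K_X+R_f$, since iterating would otherwise produce $K_X=(f^n)^*K_X+\sum_{i=0}^{n-1}(f^i)^*R_f$ with the pseudo-effective mass on the right growing without bound while $(f^n)^*K_X$ stays pseudo-effective of bounded class. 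Thus $f$ is quasi-\'etale, and combined with smoothness of $X$ and purity of the branch locus it is \'etale.

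The plan is then to invoke the preceding theorem and, after replacing $f$ by an iterate, run an $f$-equivariant MMP
\[
	X=X_0\to X_1\to\cdots\to X_r
\]
in which each $X_j\to X_{j+1}$ contracts an exceptional divisor down to an elliptic curve in the smooth threefold $X_{j+1}$, with $K_{X_r}$ nef. Abundance for threefolds makes $K_{X_r}$ semi-ample, so the equivariant Iitaka fibration theorem gives an $f$-equivariant $\phi\colon X_r\to Y$ with $f|_Y$ of finite order; after a further iteration one has $f|_Y=\id$, and $f$ then restricts to non-isomorphic \'etale endomorphisms on the general fibres of $\phi$, which are smooth projective varieties of dimension $3-\kappa(X)$ and Kodaira dimension zero.

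A case split on $\kappa:=\kappa(X)\in\{0,1,2,3\}$ finishes the classification. The case $\kappa=3$ is impossible, since every surjective endomorphism of a smooth projective variety of general type is an automorphism. For $\kappa\in\{1,2\}$, the generic fibre $F$ of $\phi$ has $\kappa(F)=0$, dimension $\le 2$, and admits a non-isomorphic \'etale self-map; the Enriques-Kodaira list together with finiteness of \'etale self-maps of K3 and Enriques surfaces forces $F$ to be an elliptic curve or, up to \'etale cover, an abelian surface. For $\kappa=0$, the Beauville-Bogomolov decomposition for smooth projective threefolds with $K\equiv 0$, applied $f$-equivariantly, forces $X_r$ to be finitely \'etale covered by an abelian threefold, because Calabi-Yau and hyperk\"ahler factors carry only finitely many \'etale endomorphisms. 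In each surviving case one promotes ``abelian fibres'' to a genuine abelian scheme by base-changing along a further finite \'etale cover $T\to Y$ that trivialises the monodromy on $R^1\phi_*\Z$ and produces a zero-section, then pulls this structure back through the elliptic-curve blowups $X_j\to X_{j+1}$ to obtain the required $\widetilde{X}\to T$, with $\Gal(\tau)$ acting on $T$ by its descent to the base.

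The main obstacle I anticipate is not identifying the fibres but assembling them into a global abelian scheme compatibly with both $f$ and the Galois action: the monodromy-killing base change $T\to Y$ must be chosen so that $\widetilde{X}\to X$ remains Galois with $\Gal(\tau)$-action descending to $T$, and in the $\kappa=0$ case one must show that the Beauville-Bogomolov decomposition lifts $f$-equivariantly onto an abelian-variety factor, ruling out mixed products containing K3 or strict Calabi-Yau pieces. A second delicate point is reversing the MMP on the cover: one must verify that each contracted elliptic curve is a (multi-)section of the intermediate abelian scheme over $T$, which is precisely where Fujimoto's classification of three-dimensional \'etale dynamics on elliptic-curve blowups is reused to guarantee that the blowups respect the abelian-scheme structure.
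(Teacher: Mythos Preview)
The paper itself does not prove this statement; it is cited as the Main Theorem of \cite{FN07} and stated without argument, so there is no in-paper proof against which to compare your attempt.

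Judged on its own merits, your outline follows the expected strategy and you correctly identify the two hard steps at the end. There is, however, one genuine error: in the $\kappa=0$ case you claim the Beauville--Bogomolov decomposition forces $X_r$ to be \'etale-covered by an abelian threefold. This is false. The decomposition also permits $\widetilde{X_r}\cong E\times S$ with $E$ an elliptic curve and $S$ a K3 surface; the map $n_E\times\id_S$ is a non-isomorphic \'etale endomorphism, yet $\pi_1(E\times S)\cong\Z^2$, so no abelian threefold can \'etale-cover it. Your reasoning (``Calabi--Yau and hyperk\"ahler factors carry only finitely many \'etale endomorphisms'') only shows that $f$ restricts to an automorphism on those factors, not that the factors are absent. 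The correct conclusion is that the abelian factor has positive dimension, whence the complementary simply-connected factor has dimension $\le 2$ and serves as the base $T$; condition (B) still follows, with $T=S$ in this case rather than $T$ a point. Beyond this, the passages you yourself flag as delicate---making the base change $T\to Y$ Galois-compatible, and transporting the abelian-scheme structure back through the elliptic-curve blowups so that it lives over $X$ and not merely over $X_r$---are precisely the technical heart of \cite{FN07}, and your sketch does not resolve them.
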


Now we consider the case when the Kodaira dimension $\kappa(X)<0$, or equivalently, $K_X$ is not pseudo-effective.
Of course, we can run MMP which ends with a Mori fibre space.
However, the MMP is no longer equivariant in general.
Here is an example.

\begin{example}
There exists a smooth rational surface $S$ admitting an automorphism $g$ of positive entropy with no $g$-periodic curves (cf.~\cite{Les21}).
Then we cannot run any $g$-EMMP.
Let $E$ be an elliptic curve and $n_E$ ($n \ge 2$) the multiplication map.
Take $X=S\times E$ and $f=g\times n_E$.
Clearly, $\kappa(X)<0$ and $f$ is \'etale.
Note that $X$ admits no $f^{-1}$-periodic prime divisor.
Indeed, let $P$ be an $f^{-1}$-invariant prime divisor and $e\in E$ the identity element.
Then $P\cap S\times \{e\}$ is $(f|_{S\times\{e\}})^{-1}$-invariant and hence $P=S\times \{a\}$ for some $a$. 
However, $E$ admits no $(n_E)^{-1}$-periodic point.
In particular, we cannot run any $f$-EMMP while $S$ and hence $X$ are clearly not relatively minimal.
\end{example}

Nevertheless, Fujimoto established the \'etale sequence of constant Picard number 
by running 
a not necessarily ``Equivariant'' (even after iteration) MMP.
We refer to \cite{Fuj18} and its sequels for the details.

\section{Quasi-amplified endomorphisms of klt log Calabi-Yau varieties}

A normal projective variety $X$ is of {\it Calabi-Yau type}, if $(X,\Delta)$ is log canonical and $K_X+\Delta\equiv 0$ 
for some effective Weil $\Q$-divisor $\Delta$.
The pair is called log Calabi-Yau.
Such a pair has $K_X+\Delta\sim_{\mathbb{Q}} 0$ ($\mathbb{Q}$-linear equivalence) by Theorem \ref{thm-gongyo}.
If the pair is further klt, we say $X$ is of klt Calabi-Yau type.

One can run the equivariant contractions for quasi-amplified endomorphism on $X$ of klt Calabi-Yau type as proved in \cite{Men23}.

\begin{theorem}\label{main-thm-qa-a}
Let $f:X\to X$ be a quasi-amplified endomorphism of a projective variety $X$ of klt Calabi-Yau type.
Then replacing $f$ by a positive power, there is an $f$-equivariant sequence of birational contractions of extremal rays 
(in the sense of \cite[Definition 4.1]{MZ20a})
$$X=X_1\to \cdots \to X_i \to \cdots \to X_r$$
(i.e. $f=f_1$ descends to surjective endomorphism $f_i$ on each $X_i$), such that we have:
\begin{itemize}
\item[(1)] $f_r$ is amplified.
\item[(2)] For each $i$, $X_i$ is of klt Calabi-Yau type.
\item[(3)] For each $i$, $f_i$ is of positive entropy and quasi-amplified and $\Per(f_i)\cap U_i$ is countable and 
Zariski dense in $X_i$ for some open subset $U_i\subseteq X_i$. 
\item[(4)] Suppose the base field $k$ is uncountable. For each $i$, $f_i$ has a Zariski dense orbit.
\end{itemize}
\end{theorem}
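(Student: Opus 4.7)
The plan is to build the sequence by iteratively contracting $f^*$-invariant extremal rays and then reading off the listed properties. After replacing $f$ by a positive power, I would arrange that $f^*$ fixes every extremal ray relevant to the MMP to follow. Since $(X, \Delta)$ is klt Calabi--Yau with $K_X + \Delta \sim_{\Q} 0$ by Theorem \ref{thm-gongyo}, a $K_X$-negative extremal ray is exactly a $\Delta$-positive ray, so such rays can be located and contracted by running the standard MMP on the klt pair $(X, (1-\varepsilon)\Delta)$ for small $\varepsilon > 0$; contractions and log flips exist by \cite{Bir12, HX13}, and the needed special termination is supplied by \cite{HH20}, since $K_X + (1-\varepsilon)\Delta \equiv -\varepsilon \Delta$ fails to be pseudo-effective in the relevant directions whenever $\Delta \ne 0$.

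For each intermediate $X_i$, I would proceed inductively. If $f_i$ is already amplified, stop. Otherwise Proposition \ref{prop-a-cri} supplies a nonzero $Z \in \NE(X_i)$ with $(f_i)_* Z \equiv Z$; a Perron--Frobenius style argument on the smallest $f_i^*$-invariant face of $\NE(X_i)$ meeting the locus of such fixed classes, combined with the cone theorem applied to $(X_i, (1-\varepsilon)\Delta_i)$, produces an $f_i^*$-fixed $K_{X_i}$-negative extremal ray $R_i$. Contracting $R_i$ (or passing to the flip when $R_i$ is small) yields $\phi_i : X_i \to X_{i+1}$ to which $f_i$ descends as $f_{i+1}$. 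Item~(2) then follows from the standard preservation of klt Calabi--Yau structure under MMP steps of the auxiliary pair $(X_i, (1-\varepsilon)\Delta_i)$, while $f_{i+1}$ remains quasi-amplified because the pushforward of a big Cartier divisor under a birational contraction is again big.

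Items~(3) and~(4) I would establish together at the end. Positive entropy of each $f_i$ is immediate from quasi-amplifiedness, for $f_i^* L - L$ big in $\N^1(X_i)$ forces some eigenvalue of $f_i^*|_{\N^1(X_i)}$ to have modulus $>1$, hence spectral radius $>1$. The countable Zariski-dense set of periodic points on some open $U_i \subseteq X_i$ would be pulled back from the terminal $X_r$ using a PCD-style result for amplified endomorphisms in the spirit of Fakhruddin \cite{Fak03}; item~(4) then follows over uncountable $k$ by a Baire category argument, since the countable union of proper $f_i$-periodic subvarieties in $U_i$ cannot exhaust $U_i(k)$.

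The main obstacle I expect is the inductive step producing the contractible $f_i^*$-fixed extremal ray. If all $f_i^*$-fixed effective classes in $\NE(X_i)$ happened to lie on $K_{X_i}$-trivial faces, the MMP would be blocked and the process could not progress. The klt Calabi--Yau hypothesis must be brought to bear to show that the big witness $f^*L - L = H$ obstructs the existence of $f_i^*$-fixed effective classes confined to $K_{X_i}$-trivial (equivalently $\Delta_i$-trivial) faces of $\NE(X_i)$; combining the $f_i^*$-invariance of the pseudo-effective cone with $K_{X_i} + \Delta_i \equiv 0$ should force every such hidden obstruction to disappear, but verifying this rigorously is the crux of the proof.
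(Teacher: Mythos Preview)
Your inductive engine is pointed at the wrong cone. You propose to contract $K_{X_i}$-negative (equivalently $\Delta_i$-positive) extremal rays, and you correctly flag at the end that the $f_*$-fixed class $Z$ might sit on a $K_{X_i}$-trivial face. But this is not a technical wrinkle to be argued away: it is the generic situation. The theorem is aimed squarely at varieties with $K_X\equiv 0$ (this is the case $\Delta=0$, e.g.\ K3 or Hyperk\"ahler, cf.\ Theorem~\ref{main-thm-hk} and the title of \cite{Men23}), where \emph{every} class in $\NE(X)$ is $K_X$-trivial and there are no $K_X$-negative rays at all. A positive-entropy automorphism of a K3 surface fixing a $(-2)$-curve is quasi-amplified and not amplified, so a nontrivial contraction is genuinely required, yet your $(K_X+(1-\varepsilon)\Delta)$-MMP is vacuous there. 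The hope in your last paragraph, that bigness of $H=f^*L-L$ forces $f_*$-fixed classes off the $K_X$-trivial locus, is therefore false.

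The missing idea is to let the big divisor $H$ itself choose the ray. From $f_*Z\equiv Z$ and the projection formula you get $H\cdot Z=0$; writing $H\equiv A+E$ with $A$ ample and $E$ effective gives $E\cdot Z=-A\cdot Z<0$. Thus $Z$ lies in the $E$-negative part of $\NE(X_i)$, and since $(X_i,\Delta_i)$ is klt with $K_{X_i}+\Delta_i\equiv 0$, one can run a $(K_{X_i}+\Delta_i+\varepsilon E)$-MMP: the $E$-negative extremal rays are finite in number, contractible, and (after iterating $f$) permuted by $f_i^*$. These are the ``birational contractions of extremal rays in the sense of \cite[Definition 4.1]{MZ20a}'' referred to in the statement---they are \emph{not} $K_X$-negative contractions in general. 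Once you reorient the argument this way, the termination, the preservation of the klt Calabi--Yau type, and the descent of quasi-amplifiedness go through much as you outline; but the core mechanism is an $E$-MMP keyed to the decomposition of $H$, not a $K_X$-MMP.
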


The density of periodic points $\Per(f_i)$ is based on a famous result of Fakhruddin \cite{Fak03}.

\begin{theorem}\label{FakThm}
Let $f:X\to X$ be an amplified endomorphism of a projective variety over an algebraically closed field $k$ of arbitrary characteristic.
Then the set $\Per(f)$ of periodic points is Zariski dense in $X$.
\end{theorem}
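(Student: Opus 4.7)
Plan: I would follow the intersection-theoretic strategy underlying Fakhruddin's \cite{Fak03} original proof. The statement is equivalent to saying that for every proper Zariski-closed subset $Y \subsetneq X$ there is some iterate $f^n$ with a fixed point lying outside $Y$. By a short Noetherian-induction step, I may replace $Y$ by the largest closed $f$-invariant subvariety $\tilde Y \subseteq Y$, since every periodic point of $f$ that happens to lie in $Y$ must in fact lie in $\tilde Y$. Note $f|_{\tilde Y}$ is again amplified (the restriction $H|_{\tilde Y}$ of the ample witness is ample), so inductive use of the theorem is consistent.

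The fixed locus of $f^n$ is realized as the scheme-theoretic intersection $\Gamma_{f^n}\cap\Delta_X$ inside $X\times X$, where both the graph $\Gamma_{f^n}$ and the diagonal $\Delta_X$ are $n$-dimensional with $n=\dim X$. The amplified hypothesis $f^*L-L=H$ with $H$ ample, when iterated, gives $(f^n)^*L-L=\sum_{i=0}^{n-1}(f^i)^*H$; equivalently, the pullback of $p_2^*L-p_1^*L$ to $\Gamma_{f^n}\cong X$ equals this ample class, while its pullback to $\Delta_X$ is $0$. This growing positivity is the key input.

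The core of the proof is then a two-sided comparison of intersection numbers. First, a lower bound of the form $\Gamma_{f^n}\cdot\Delta_X \ \ge\ c\,(\deg f)^n$ for some $c>0$ and all large $n$: I would extract this from a Lefschetz-type trace calculation, after checking that the top-degree eigenvalue of $f^*$ (which equals $\deg f$) strictly dominates all lower eigenvalues on the Chow/cohomology ring, with the amplified condition and Proposition \ref{prop-a-cri} (stating that $f_*$ has no nonzero fixed class in $\NE(X)$) ruling out cancellation. Second, an upper bound showing that the contribution to this intersection coming from fixed points lying in the $f$-invariant proper subvariety $\tilde Y$ grows at most like a constant times $(\deg f|_{\tilde Y})^n$; since $\dim\tilde Y<\dim X$ one has $\deg f|_{\tilde Y}<\deg f$, so for $n$ large enough the lower bound strictly exceeds the upper bound, yielding a fixed point of $f^n$ outside $Y$.

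The main obstacle is making the lower bound rigorous in arbitrary characteristic and without assuming the intersection $\Gamma_{f^n}\cap\Delta_X$ has the expected dimension zero. Fakhruddin circumvents this by spreading out $X$, $f$, $L$, $H$ to a finitely generated $\mathbb{Z}$-algebra and reducing modulo a maximal ideal: over the resulting finite residue field every closed point of the fibre is automatically periodic, and one transfers density back to the generic fibre via a Hilbert-scheme / cycle-theoretic limit argument. An alternative route uses \'etale cohomology with Deligne-type weight bounds to control the Lefschetz trace directly, but either way the amplified condition is precisely what prevents the non-top eigenvalues from conspiring to cancel the leading $(\deg f)^n$ term.
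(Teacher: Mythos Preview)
The paper does not prove this statement; it is quoted from Fakhruddin \cite{Fak03} and used as a black box. Fakhruddin's own argument is different from both of the lines you sketch: its key step is to use the ampleness of $H=f^{*}L-L$ to build an \emph{equivariant} closed immersion $(X,f)\hookrightarrow(\mathbb{P}^{N},\Phi)$ with $\Phi$ a morphism of algebraic degree $q\ge 2$ (his Corollary~2.2), after which one works directly on $\mathbb{P}^{N}$, where the fixed locus of each iterate $\Phi^{n}$ is automatically zero-dimensional (a positive-dimensional fixed component would force $\mathcal{O}(q-1)$ to restrict trivially to it).

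Your primary plan has a genuine gap. The claim that ``the top-degree eigenvalue of $f^{*}$ (which equals $\deg f$) strictly dominates all lower eigenvalues'' is exactly the condition that $f$ be $n$-cohomologically hyperbolic, which by Proposition~\ref{Mats_Prop} is equivalent to $f$ being \emph{int}-amplified --- strictly stronger than amplified. The companion inequality $\deg f|_{\tilde Y}<\deg f$ fails for the same reason. Take any amplified \emph{automorphism}: for instance a positive-entropy automorphism $f$ of a projective K3 surface with Picard number~$2$, where $f^{*}|_{\N^{1}(X)}$ has eigenvalues $\lambda>1$ and $\lambda^{-1}$, neither equal to~$1$, so $f$ is amplified by Proposition~\ref{prop-a-cri}. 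Here $\deg f=1$, and for any $f$-invariant finite set $\tilde Y$ one also has $\deg f|_{\tilde Y}=1$, so your growth comparison $1^{n}$ versus $1^{n}$ is vacuous. The Lefschetz number actually grows like $\delta_{1}(f)^{n}=\lambda^{n}$, not $(\deg f)^{n}$; Proposition~\ref{prop-a-cri} only excludes the eigenvalue~$1$ on $\N_{1}(X)$ and does not supply the spectral dominance you invoke.

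Your fallback via spreading out to finite residue fields is not Fakhruddin's route either, and the transfer step is unjustified as written: the special fibre over a finite field acquires many \emph{new} periodic points that do not lift to the generic fibre, so density downstairs does not by itself bound the Zariski closure of $\Per(f)$ upstairs; a ``Hilbert-scheme / cycle-theoretic limit argument'' does not repair this without substantial further input.
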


As an application of Theorem \ref{main-thm-qa-a} to Hyperk\"ahler manifolds, we have:

\begin{theorem}\label{main-thm-hk}
Let $f:X\to X$ be an automorphism of a projective Hyperk\"ahler manifold $X$.
Then the following are equivalent.
\begin{itemize}
\item[(1)] $f$ is of positive entropy.
\item[(2)] $f^*D\not\equiv D$ for any nef $\mathbb{R}$-Cartier divisor $D\not\equiv 0$.
\item[(3)] $f$ is quasi-amplified.
\item[(4)] For some $n>0$, $f^n$ is birationally equivalent to some amplified automorphism $f':X'\to X'$.
\end{itemize}
Moreover, if $f$ is PCD, then all the above are satisfied.
\end{theorem}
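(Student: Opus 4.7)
The plan is to exploit two structural features of the projective Hyperk\"ahler manifold $X$: it is of klt log Calabi--Yau type (with $\Delta=0$ and $K_X\sim 0$), so Theorem \ref{main-thm-qa-a} applies; and the Beauville--Bogomolov--Fujiki (BBF) form $q$ on $\N^1(X)_{\R}$ has signature $(1,\rho(X)-1)$, is preserved by $f^*$, and satisfies $q(D)\ge 0$ for every nef $D$. I would organize the proof as $(3)\Rightarrow(1)\Leftrightarrow(2)$, $(1)\Rightarrow(4)\Rightarrow(3)$, and handle the PCD clause at the end.

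The two routine directions are $(3)\Rightarrow(1)$ and $(4)\Rightarrow(3)$. The first is Theorem \ref{main-thm-qa-a}\,(3) applied with $\Delta=0$. For the second, birationally equivalent projective Hyperk\"ahler manifolds are isomorphic in codimension one, so their $\N^1$'s and their big cones (the interior of the pseudo-effective cone, which is determined by codimension-one data) are canonically identified; hence the amplifiedness of $f'$ on $X'$ transfers to a quasi-amplified witness for $f^n$ on $X$. To descend from $f^n$ to $f$, observe that if $(f^n)^*L-L$ is big then $L':=\sum_{i=0}^{n-1}(f^*)^iL$ satisfies $f^*L'-L'=(f^n)^*L-L$, which is still big, so $f$ itself is quasi-amplified.

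For $(1)\Leftrightarrow(2)$ I would use that $f^*$ is a $q$-isometry. Assume by contrapositive that some nonzero nef $D$ satisfies $f^*D\equiv D$, and split on $q(D)$. When $q(D)>0$, the $f^*$-stable subspace $D^\perp$ is negative definite, so any $q$-isometry on it has eigenvalues of modulus one; combined with the fixed line $\langle D\rangle$, every eigenvalue of $f^*$ on $\N^1(X)_{\R}$ has modulus one, i.e.~$f$ is of null entropy. When $q(D)=0$, the induced negative definite form on $D^\perp/\langle D\rangle$ yields the same conclusion after tracking the one-dimensional quotient $\N^1(X)_{\R}/D^\perp$. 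Thus $(1)\Rightarrow(2)$. Conversely, if $f$ is of null entropy, every eigenvalue of $f^*$ on the lattice $\NS(X)$ is an algebraic integer of modulus one, hence a root of unity by Kronecker; semisimplicity of $f^*$ (forced by preservation of the nondegenerate $q$ in characteristic zero) then gives $(f^*)^n=\id$ for some $n\ge 1$, and averaging an ample class $A$ produces the nonzero $f^*$-fixed nef class $\sum_{i=0}^{n-1}(f^*)^iA$, contradicting (2).

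The main step is $(1)\Rightarrow(4)$: producing, after iteration, a birational Hyperk\"ahler model on which the lifted automorphism is amplified. My plan is to invoke Markman's chamber decomposition of the movable cone of $X$ by the nef cones of its (finitely many up to the birational monodromy group) birational Hyperk\"ahler models, on which $\Bir(X)$ acts by chamber permutations. Since $f^*$ preserves this combinatorial structure, after passing to an iterate $f^n$ we may assume that some chamber is stabilized, so $f^n$ descends to an honest automorphism $f'$ of the corresponding birational model $X'$. Condition (2) transfers to $X'$, and the $q$-induced self-duality of $\N^1(X')$ should translate the divisor-side ``no fixed nef class'' condition into the cycle-side hypothesis of Proposition \ref{prop-a-cri}, forcing $f'$ to be amplified. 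The hard part is precisely this last verification---matching Proposition \ref{prop-a-cri} against condition (2) via BBF duality, and confirming that the chamber-stabilizing iterate yields full amplifiedness rather than mere quasi-amplifiedness. Finally, for the PCD clause: null entropy on a projective Hyperk\"ahler manifold would force $(f^n)^*=\id$ on integral cohomology for some $n$ (by Kronecker plus semisimplicity), and since the kernel of $\Aut(X)\to\Aut(H^*(X,\Z))$ is finite for projective Hyperk\"ahler, $f$ itself would have finite order, so $\Per(f)=X$ would be uncountable, a contradiction. Hence PCD $\Rightarrow(1)$, and the established equivalences yield (2), (3), and (4).
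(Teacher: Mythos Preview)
There are two problems. First, a fixable error: your claim that preservation of the nondegenerate BBF form forces $f^*$ to be semisimple is false---the orthogonal group $O(1,\rho-1)$ contains parabolic (unipotent) elements---so in both your $(2)\Rightarrow(1)$ argument and the PCD clause, null entropy plus Kronecker yields only that some power $(f^*)^N$ is unipotent on $\NS(X)$, not the identity. The conclusion of $(2)\Rightarrow(1)$ can be salvaged (a renormalized Ces\`aro limit of $(f^*)^{Ni}A$ for ample $A$ still produces a nonzero nef $(f^*)^N$-fixed class, which one then averages over $i=0,\dots,N-1$), but your justification as written is wrong, and the PCD clause needs a genuinely different argument in the parabolic case since $f$ then has infinite order.

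Second, the real gap is $(1)\Rightarrow(4)$. Markman's chamber decomposition buys you nothing: since $f$ is already an automorphism of $X$, $f^*$ already stabilizes the chamber $\Nef(X)$, so your argument just outputs $X'=X$, $f'=f^n$, and you are left having to show that $f$ itself is amplified on $X$. But condition (2) is strictly weaker than the criterion of Proposition~\ref{prop-a-cri}: the eigenvalue-$1$ subspace of $f^*$ can be nonzero and, under any BBF-type identification of $\N^1$ with $\N_1$, meet $\NE(X)$ without meeting $\Nef(X)$---so the ``BBF duality'' step you flag as hard does not go through. The paper's route is different: first prove $(1)\Rightarrow(3)$ directly from the BBF form (if $f^*$ is a hyperbolic isometry with nef isotropic eigenvectors $v_{\pm}$ for eigenvalues $\lambda^{\pm1}$, then for suitable $L$ the class $f^*L-L$ is a positive combination of $v_+,v_-$ with positive BBF square, hence big on a projective Hyperk\"ahler), and then invoke Theorem~\ref{main-thm-qa-a} verbatim to get $(3)\Rightarrow(4)$: the equivariant birational \emph{contractions} there produce a typically singular, non-Hyperk\"ahler $X_r$ on which the descended map is amplified. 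Consequently your $(4)\Rightarrow(3)$ argument, which assumes $X'$ is Hyperk\"ahler to invoke the codimension-one isomorphism, should also be replaced---e.g.\ by $(4)\Rightarrow(1)$ via the birational invariance of $\delta_f$.
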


\section{Fano threefolds with non-isomorphic endomorphisms}

In this section, we characterize smooth Fano threefolds admitting non-isomorphic surjective endomorphisms \cite[Main Theorem, Theorems 1.3 and 1.4]{MZZ22}.

\begin{theorem}
A smooth Fano threefold
is either toric or a product of $\mathbb{P}^1$ and a del Pezzo surface
if and only if
it admits a non-isomorphic surjective endomorphism.
\end{theorem}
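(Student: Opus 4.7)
The plan splits into two directions. For the \textbf{if} direction, I would exhibit explicit endomorphisms. A smooth toric Fano threefold with open torus $T$ admits the unique extension of the power map $[n]:T\to T$ for every $n\ge 2$, and this extension is a non-isomorphic surjective endomorphism of $X$. For $X=\PP^1\times S$ with $S$ a smooth del Pezzo surface, the product $(z\mapsto z^n)\times\id_S$ is surjective and non-isomorphic. (The two types overlap on e.g. $\PP^1\times\PP^2$ and $\PP^1\times\PP^1\times\PP^1$.)

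For the \textbf{only if} direction, let $f:X\to X$ be a non-isomorphic surjective endomorphism of a smooth Fano threefold. The key preliminary observation is that $\NE(X)$ is rational polyhedral by the cone theorem for Fano $X$, and is preserved by $f_*$; hence $f_*$ permutes its finitely many extremal rays and, after replacing $f$ by a sufficient iterate, fixes each of them. As a consequence every Mori contraction from $X$ is $f$-equivariant, and one may run an $f$-equivariant MMP from $X$. I would then analyse cases according to $\rho(X)$ and the extremal-contraction type in Mori's classification: for $\rho(X)=1$ I would compare $X$ against the Mori-Mukai list of Picard rank one smooth Fano threefolds directly; for $\rho(X)\ge 2$, pick an $f$-equivariant Mori contraction $\phi:X\to X'$ and split into (a) a divisorial contraction along an $f$-invariant divisor (induct on $\rho$), (b) a conic bundle over a smooth surface (invoke Theorem \ref{thm-surf} on the induced endomorphism of the base together with Nakayama-Fujimoto-type classifications of $\PP^1$-fibrations), or (c) a del Pezzo fibration over $\PP^1$ (analyse the induced endomorphism on $\PP^1$ and on the del Pezzo fibres). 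At every step the purity of branch together with simple connectedness of smooth Fano varieties forces $R_f\ne 0$, which gives a quantitative foothold via the ramification formula $R_f=(f^*-\id)(-K_X)$ on the coefficients of $f^*$ in the extremal-ray basis.

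Theorem \ref{thm-KT} provides a valuable coarse filter: whenever $f$ turns out to be int-amplified, $X$ must satisfy Bott vanishing for ample Weil divisors, which fails for the majority of smooth Fano threefold families and eliminates them wholesale. An important subtlety, however, is that $f$ need not be int-amplified — already $f=[n]\times\id_S$ on $\PP^1\times S$ has $1$ as an eigenvalue of $f^*$ — so Bott vanishing alone cannot finish the job. The cases where $f^*$ has an eigenvalue equal to $1$ must be treated by isolating the $(f^*=\id)$-eigenspace of $\N^1(X)$, identifying it with a pullback from an equivariant factor via a Fano contraction, and factoring out this trivial direction through the equivariant MMP to reduce to a lower-dimensional problem.

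The main obstacle is the systematic verification, family by family in the Mori-Mukai classification, that the combined constraints — $f$-equivariance of every extremal contraction, Bott vanishing whenever int-amplification holds, and compatibility of the induced dynamics with Theorem \ref{thm-surf} on any surface base — eliminate every smooth Fano threefold that is neither toric nor of the form $\PP^1\times(\text{del Pezzo})$. The delicate families of intermediate Picard rank with several competing extremal rays, for which $f$ may fail to be int-amplified, are precisely where I expect the bulk of the technical work to lie.
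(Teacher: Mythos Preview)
Your ``if'' direction is fine, and your ``only if'' outline has the right architecture (equivariant MMP after iteration, Picard-number stratification, int-amplified versus not). But the concrete route you propose differs from the paper's and contains a genuine gap.

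The paper (following \cite{MZZ22}) does \emph{not} induct on $\rho$ through an arbitrary divisorial contraction. It treats $\rho(X)\le 2$ directly---the $\rho=1$ step is the theorem of Amerik--Rovinsky--Van de Ven (alternatively Hwang--Mok) forcing $X\cong\PP^3$, not a checklist against Mori--Mukai---and for $\rho(X)\ge 3$ it first proves that $X$ must carry a Fano conic bundle $\tau:X\to Y$, then runs the \emph{relative} MMP over $Y$ in the sense of \cite{MM83}. This relative MMP has the crucial property that every intermediate $X_i$ is again a \emph{smooth Fano} threefold and the sequence ends in a $\PP^1$-bundle $\PP_Y(\mathcal E)$. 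Your step (a), ``divisorial contraction, then induct on $\rho$'', fails on both counts: the target $X'$ of a $K_X$-negative divisorial contraction from a smooth Fano threefold is smooth but in general only weak Fano, so the inductive hypothesis is unavailable; and even when $X'$ happens to be toric (or $\PP^1\times S$), the blowup $X\to X'$ need not be toric unless the centre is torus-invariant. The paper flags exactly this as one of its two hard issues---``the EMMP in general is not a sequence of toric blowups'' and a \emph{specific} EMMP must be chosen---so ``pick an $f$-equivariant Mori contraction'' is not enough.

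The two technical obstacles the paper isolates, and which your proposal does not name, are (i) proving the rank-$2$ bundle $\mathcal E$ on $Y$ splits, and (ii) arranging the blowup centres $\overline C_i\subset\PP_Y(\mathcal E)$ to be toric. These replace your open-ended ``family-by-family verification''. Your invocation of Theorem~\ref{thm-KT} (Bott vanishing) is legitimate in the int-amplified subcase, but that is Totaro's \emph{later} alternative proof of Theorem~\ref{main-thm-toric}, not the argument of \cite{MZZ22}; and, as you observe, it is silent when $f^*$ has eigenvalue $1$. For that non-int-amplified regime the paper proves Theorem~\ref{main-thm-prod} directly via the same conic-bundle machinery, which is substantially more than ``factoring out the trivial eigenspace'' and in particular does not use Theorem~\ref{thm-surf} on the base.
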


To be precise, we have the following characterization. 
Note that a surjective endomorphism $f$ on a Fano variety or more generally on a Mori Dream Space, 
is amplified if and only if it is int-amplified (cf.~Proposition \ref{prop-iamp-amp}).

\begin{theorem}\label{main-thm-prod}
Let $X$ be a smooth Fano threefold. Then the following are equivalent.
\begin{enumerate}
\item[(1)]
$X$ is a product of $\mathbb{P}^1$ and a del Pezzo surface.
\item[(2)]
$X$ admits a non-isomorphic surjective endomorphism which is not polarized even after iteration.
\item[(3)]
$X$ admits a non-isomorphic surjective endomorphism which is non-amplified (or equivalently, non-int-amplified).
\end{enumerate}
\end{theorem}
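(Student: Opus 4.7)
The plan is to establish the easy implications $(1) \Rightarrow (2), (3)$ by an explicit product construction, and the two hard directions $(2) \Rightarrow (1)$ and $(3) \Rightarrow (1)$ by extracting a nontrivial $f$-equivariant fibration from a fixed class of $f^*$ on $\N^1(X)$ and concluding via the Mori--Mukai classification of smooth Fano threefolds.

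For $(1) \Rightarrow (2), (3)$, I would simultaneously write $X = \mathbb{P}^1 \times Y$ with $Y$ a smooth del Pezzo surface and put $f = g \times \id_Y$, where $g : \mathbb{P}^1 \to \mathbb{P}^1$ is a non-isomorphic surjective endomorphism of degree $q > 1$. Then $f^*$ acts on $\N^1(X) = \N^1(\mathbb{P}^1) \oplus \N^1(Y)$ as the diagonal matrix $\diag(q, 1, \ldots, 1)$, so every iterate $f^n$ carries the two distinct eigenvalues $q^n$ and $1$ and is in particular never polarized; simultaneously, for any curve $C \subset Y$ the class $[\{*\} \times C] \in \NE(X)$ is nonzero and fixed by $f_*$, so by Proposition \ref{prop-a-cri} the endomorphism $f$ is not amplified.

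For $(3) \Rightarrow (1)$, let $f : X \to X$ be non-isomorphic and non-amplified. Since $X$ is a smooth Fano threefold and thus a Mori Dream Space with rational polyhedral $\Nef(X)$, Proposition \ref{prop-iamp-amp} upgrades this to $f$ non-int-amplified, and Theorem \ref{main-thm-cri} then produces an eigenvalue of $f^*|_{\N^1(X)}$ of modulus $\le 1$. Since $f^*$ permutes the finitely many extremal rays of the polyhedral cone $\Nef(X)$, after iteration it fixes each of them and hence acts diagonally with positive integer eigenvalues on a basis of nef classes; the non-int-amplified condition together with integrality of these eigenvalues forces one of them to equal $1$. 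This produces a nonzero non-big nef class $L$ with $f^*L = L$; the base-point-free theorem (available because $-K_X$ is ample) makes $L$ semi-ample, so its Iitaka morphism $\pi : X \to Z$ is an $f$-equivariant contraction with $0 < \dim Z \le 2$, on which $f$ descends to $g : Z \to Z$ preserving an ample class, hence an automorphism of finite order after iteration.

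It then remains to analyze the two cases. If $\dim Z = 2$, then by Mori's classification of extremal contractions on smooth Fano threefolds, $Z$ is a smooth del Pezzo surface and $\pi$ is a $\mathbb{P}^1$-bundle or conic bundle; equivariance of $f$ with $g$ of finite order confines the non-isomorphic dynamics to the fibers, and an inspection of the Mori--Mukai list of such smooth Fano threefolds shows that the only one admitting such an $f$ is the trivial product $Z \times \mathbb{P}^1$. If $\dim Z = 1$, then $Z \cong \mathbb{P}^1$ by rational connectedness of $X$ and $\pi$ is a del Pezzo fibration; after iteration $g = \id$, so $f$ restricts to a non-isomorphic endomorphism on a general fiber, and the fact that only toric del Pezzo surfaces admit non-isomorphic surjective endomorphisms, combined with the Mori--Mukai list of smooth Fano threefolds with a del Pezzo fibration over $\mathbb{P}^1$, again forces $X = \mathbb{P}^1 \times Y$. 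For $(2) \Rightarrow (1)$, the argument reduces to $(3) \Rightarrow (1)$ when $f$ is non-int-amplified; otherwise $f$ is int-amplified but not polarized after iteration, in which case Theorem \ref{main-thm-GMMP} implies that $f^*$ has at least two distinct positive integer eigenvalues (both $> 1$) on $\N^1(X)$, and picking the nef eigenclass with the smaller eigenvalue again yields a nontrivial equivariant fibration to which the same case analysis applies. The main obstacle throughout is this final step: ruling out, via a careful walk through the Mori--Mukai tables, every non-product smooth Fano threefold carrying an equivariant $\mathbb{P}^1$-bundle or del Pezzo fibration of the kind we produced.
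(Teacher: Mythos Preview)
Your easy direction $(1)\Rightarrow(2),(3)$ is correct, and your opening move for $(3)\Rightarrow(1)$---extracting a nef class $L$ with $f^*L=L$, necessarily non-big, hence an $f$-equivariant fibration $\pi:X\to Z$---is sound. The execution thereafter has real gaps. Since $L$ is extremal in $\Nef(X)$, the map $\pi$ contracts an entire \emph{facet} of $\NE(X)$; when $\rho(X)\ge 3$ this is not a Mori extremal contraction, so Mori's classification of extremal contractions does not apply and $Z$ need not be a smooth del Pezzo surface. Next, $g=f|_Z$ is an automorphism acting trivially on $\N^1(Z)\cong\R$, but for $Z\cong\mathbb P^1$ or $\mathbb P^2$ this does not force $g$ to have finite order; hence you cannot conclude that $f$ restricts to the general fibre, and the fibrewise del Pezzo argument collapses. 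In your Case~B of $(2)\Rightarrow(1)$ the situation is worse: nothing you wrote prevents the smallest-eigenvalue Nef-extremal class from being big (for instance, on $\mathrm{Bl}_p\mathbb P^3$ one of the two Nef-extremal rays is the big class $\pi^*\mathcal O_{\mathbb P^3}(1)$), so you may only obtain a birational contraction; and even when the class is not big, the induced $g$ now has degree $>1$, so ``the same case analysis'' does not transfer.

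The paper's route (outlined in this section and carried out in \cite{MZZ22}) is organised quite differently and is built to avoid precisely these problems. After disposing of $\rho(X)\le 2$ by direct classification, one first shows that any smooth Fano threefold with $\rho(X)\ge 3$ admitting a non-isomorphic endomorphism carries a genuine Fano conic bundle $\tau:X\to Y$, and then runs a relative EMMP over $Y$ that stays inside the category of \emph{smooth Fano threefolds} and terminates in an algebraic $\mathbb P^1$-bundle $\mathbb P_Y(\mathcal E)$ (Theorems cited here as 8.8 and 8.9). The substantive work is then concentrated in two concrete residual problems---proving that $\mathcal E$ splits, and choosing the EMMP so that each blow-up is compatible with a product decomposition---handled separately for the polarized and non-polarized cases in \cite[Theorems~6.4, 7.1, 8.1]{MZZ22}. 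Your proposed ``careful walk through the Mori--Mukai tables'' is not a substitute for this machinery: the tables list the varieties and their extremal contractions, but deciding which non-product entries actually admit a non-isomorphic endomorphism compatible with a given fibration is exactly the hard content, and it is not a table lookup.
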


\begin{theorem}\label{main-thm-toric}
Let $X$ be a smooth Fano threefold. Then the following are equivalent.
\begin{enumerate}
\item[(1)]
$X$ is a toric variety.
\item[(2)]
$X$ admits a polarized endomorphism.
\item[(3)]
$X$ admits an amplified
(or equivalently, int-amplified)
endomorphism.
\end{enumerate}
\end{theorem}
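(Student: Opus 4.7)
The plan is to prove the chain $(1) \Rightarrow (2) \Rightarrow (3) \Rightarrow (1)$. The parenthetical equivalence of amplified and int-amplified on $X$ is immediate from Proposition \ref{prop-iamp-amp}, since a smooth Fano threefold has rational polyhedral $\Nef(X)$ (indeed, $\NE(X)$ is polyhedral by the cone theorem as $-K_X$ is ample). For $(1) \Rightarrow (2)$, on a toric variety $X = X_\Sigma$ the multiplication-by-$q$ map on the cocharacter lattice induces, for each integer $q > 1$, a surjective endomorphism pulling back every ample divisor class to $q$ times itself, hence polarized. For $(2) \Rightarrow (3)$, if $f^*L \sim qL$ with $L$ ample and $q > 1$, then $f^*L - L = (q-1)L$ is ample, so $f$ is int-amplified, hence amplified.

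The substantive content lies in $(3) \Rightarrow (1)$, which I would attack via equivariant MMP. Since $-K_X$ is ample, $K_X$ is not pseudo-effective, so Theorem \ref{main-thm-GMMP} applies: after replacing $f$ by an iterate, there is an $f$-equivariant sequence
\[
X = X_0 \dashrightarrow X_1 \dashrightarrow \cdots \dashrightarrow X_{r-1} \dashrightarrow X_r = Y
\]
of $K$-negative extremal divisorial contractions and flips, terminating in a Fano contraction to a $Q$-abelian variety $Y$, with each $X_i \to Y$ equi-dimensional with rationally connected fibres. A $Q$-abelian variety carries no rational curves, while the smooth Fano $X$ is rationally connected; so $Y$ must be a point and $X_{r-1}$ is a $\mathbb{Q}$-factorial klt Fano threefold of Picard number $1$ carrying an induced int-amplified endomorphism. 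The proof then reduces to two classification tasks: (i) show that each $\mathbb{Q}$-factorial klt Fano threefold of $\rho = 1$ admitting an int-amplified endomorphism is toric, using integrality of the (single, real) eigenvalue of $f^*$ on $\N^1$ together with the Iskovskikh--Mori--Mukai list and the rigidity of such $\rho = 1$ targets; and (ii) verify inductively that an $f_i$-equivariant divisorial contraction or flip between $\mathbb{Q}$-factorial klt Fano-type threefolds preserves toric-ness, so that toric-ness propagates back up the tower to $X$.

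A more conceptual alternative bypasses the tower analysis by invoking Theorem \ref{thm-KT}: an int-amplified endomorphism of $X$ forces Bott vanishing $H^i(X, \Omega_X^j(A)) = 0$ for all $i > 0$, $j \geq 0$, and every ample Weil divisor $A$; one then appeals to the classification of smooth Fano threefolds satisfying Bott vanishing, which coincides precisely with the toric list.

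The hard step in either route is a classification input. In the EMMP route it is twofold: pinning down the $\rho = 1$ klt Fano threefold endpoints via the Iskovskikh--Mori--Mukai list together with the dynamical constraints coming from $f^*$, and checking via Mori--Mukai's enumeration of equivariant extremal contractions on smooth Fano threefolds that each step of the tower lifts toric structure. In the Bott-vanishing route the obstacle is compressed into an explicit cohomological case analysis across the Mori--Mukai families. In either approach the conceptual punchline is the same: the rigidity enforced by an int-amplified dynamical system, combined with the rigidity of the smooth Fano threefold classification, leaves room only for the toric examples.
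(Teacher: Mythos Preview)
Your easy implications and the Bott-vanishing alternative are fine; the latter is indeed a valid route, and the paper records it in Remark~\ref{rmk-fano}(3) as Totaro's approach via \cite{Tot23a, Tot23b, KT23}. But your primary EMMP argument has a real gap. The general EMMP of Theorem~\ref{main-thm-GMMP} leaves the smooth category: after the first divisorial contraction the intermediate $X_i$ are only $\Q$-factorial klt, and flips may occur. Consequently your endpoint $X_{r-1}$ is in general a \emph{singular} $\Q$-factorial klt Fano threefold with $\rho=1$, and for such varieties neither the Iskovskikh--Mori--Mukai list (which is the smooth classification) nor the Amerik--Rovinsky--Van de Ven theorem (also smooth) is available. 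There is no usable classification of singular $\rho=1$ klt Fano threefolds to feed into your step~(i). Step~(ii) is equally problematic: propagating toric-ness back through flips and through divisorial contractions between singular varieties is not at all clear, and the paper explicitly warns that ``the EMMP in general is not a sequence of toric blowups.''

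The paper's actual strategy avoids these obstacles by never leaving the smooth Fano category. It first disposes of $\rho(X)\le 2$ directly (the $\rho=1$ case being the smooth result of \cite{Ame97,ARV99,HM03}, giving $\mathbb{P}^3$). For $\rho(X)\ge 3$ it does \emph{not} run the abstract EMMP toward a $Q$-abelian base; instead it first establishes a Fano conic bundle $\tau:X\to Y$ over a surface, and then runs a carefully chosen \emph{relative} MMP over $Y$ (Theorem~8.9 in the text) in which every $X_i$ remains a smooth Fano threefold and the end product is an algebraic $\mathbb{P}^1$-bundle $\mathbb{P}_Y(\mathcal{E})$. The two residual difficulties---showing $\mathcal{E}$ splits, and showing each blowup in this particular EMMP is a toric blowup---are then handled case by case in \cite{MZZ22}, and the second of these genuinely depends on the specific choice of EMMP. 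So the reduction is not to a $\rho=1$ threefold at all, but to a projectivised rank-$2$ bundle over a surface together with a controlled sequence of smooth blowups.
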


Theorem \ref{main-thm-toric} gives a partial answer to the following question, originally proposed by 
Fakhruddin \cite[Question 4.4]{Fak03} for polarized endomorphisms and later generalised to int-amplified cases \cite[Question 1.2]{MZg20}.
Question \ref{main-que-toric} itself generalizes a long-standing folklore Conjecture
of the 1980s, which focuses on Fano manifolds of Picard number $1$.

\begin{question}\label{main-que-toric}
Let $X$ be a rationally connected smooth projective variety. Suppose that $X$ admits an int-amplified (or polarized) endomorphism $f$. Is $X$ a toric variety?
\end{question}

Now we briefly explain the strategy of the proofs of the theorems in this section.

Recall that a smooth projective variety is {\it Fano} if $-K_X$ is ample.
By \cite[Corollary 1.3.2]{BCHM10}, a Fano manifold is a Mori Dream Space.
So $\NE(X)$ is a rational polyhedron (this can also be obtained by the cone theorem).
Then we can run EMMP for any surjective endomorphism $f:X\to X$.

The proof starts with the following Picard number $1$ case, due to Amerik, Rovinsky and Van de Ven \cite{Ame97, ARV99}, 
see also \cite{HM03} for a different approach.

\begin{theorem}
Let $X$ be a smooth Fano threefold of Picard number $1$ admitting a non-isomorphic surjective endomorphism.
Then $X\cong \mathbb{P}^3$.
\end{theorem}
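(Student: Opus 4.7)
The approach has three stages: use $\rho(X) = 1$ to show $f$ is polarized and control its ramification, apply Kobayashi-Ochiai to dispose of the maximal index case, and then rule out the remaining families in the Iskovskih-Mori-Mukai classification.

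First, since $\rho(X) = 1$, the Picard group is $\Pic(X) = \Z L$ for an ample generator $L$, and consequently $f^{\ast}L \equiv qL$ for some positive integer $q$. As $\deg f = q^{\dim X} = q^3$ and $f$ is not an isomorphism, we have $q \geq 2$; thus $f$ is $q$-polarized in the sense of Definition 3.1. Writing $-K_X = rL$ with $r$ the Fano index, the ramification divisor formula $K_X = f^{\ast}K_X + R_f$ yields the nonzero effective divisor $R_f = r(q-1)L$.

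Next, Kobayashi-Ochiai gives $r \leq 4$, with $r = 4$ iff $X \cong \PP^3$ (the desired conclusion) and $r = 3$ iff $X$ is the smooth quadric $Q^3$. The quadric case is eliminated by the classical fact (Paranjape-Srinivas type) that every surjective self-morphism of $Q^n$ for $n \geq 3$ is an automorphism, which contradicts $\deg f = q^3 > 1$. It remains to exclude $r \in \{1, 2\}$: by Iskovskih-Mori-Mukai these correspond to the del Pezzo threefolds $V_d$ for $d \in \{1,\dots,5\}$ (when $r = 2$), and the prime Fano threefolds of genus $g \in \{2,\dots,10,12\}$ (when $r = 1$).

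The third stage uses the variety of minimal rational tangents (VMRT) $\SC_x \subset \PP(T_xX)$: because $f$ is finite and $q$-polarized, it preserves the family of minimal rational curves, and the derivative of $f$ at a general point induces a dominant self-correspondence on the VMRT. Cartan-Fubini type extension in the style of Hwang-Mok then shows that this correspondence extends to a projective-linear isomorphism of the VMRT, which, combined with the rigidity of the VMRT of each specific family, forces $f$ itself to be an isomorphism and contradicts $q \geq 2$. The main obstacle is precisely this VMRT-rigidity step: in the lowest index and genus cases the VMRT is only one-dimensional, so Cartan-Fubini must be supplemented by the more refined, case-by-case analysis of the scheme of minimal rational curves on $X$, as carried out in Amerik-Rovinsky-Van de Ven.
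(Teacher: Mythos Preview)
The paper does not supply its own proof of this theorem; it simply quotes it as the foundational Picard-number-one case, attributing it to Amerik \cite{Ame97} and Amerik--Rovinsky--Van de Ven \cite{ARV99}, with the Hwang--Mok approach \cite{HM03} mentioned as an alternative. Your three-stage outline is broadly faithful to those sources: the reduction to a polarized $f$ via $\rho(X)=1$, the Kobayashi--Ochiai disposal of index $r\ge 3$ (with Paranjape--Srinivas for the quadric), and then elimination of the remaining Iskovskih families.

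There is, however, a genuine imprecision in your third stage. The assertion that ``$f$ preserves the family of minimal rational curves'' is false as written: if $C$ is a minimal rational curve with $L\cdot C = d$, then by the projection formula $L\cdot f_*[C] = f^*L\cdot [C] = qd$, so for a general such $C$ (on which $f$ is birational onto its image) the image $f(C)$ has $L$-degree $qd > d$ and is \emph{not} minimal. The actual mechanism in \cite{HM03} runs in the opposite direction: one studies \emph{preimages} of minimal rational curves and uses the hypothesis that the target carries standard rational curves (those with trivial normal bundle) to force the tangent map to identify the VMRTs; this is what makes Cartan--Fubini applicable. For the families with $r\le 2$ whose VMRT is zero- or one-dimensional, Cartan--Fubini does not apply directly, and your deferral to \cite{ARV99} for the ``refined, case-by-case analysis'' is appropriate---but then the argument is essentially ``see \cite{ARV99}'' rather than self-contained. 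As a proof \emph{plan} this is acceptable, since the paper itself only cites the result; as a proof, the VMRT direction would need to be corrected and the residual case analysis either reproduced or explicitly black-boxed.
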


The Picard number $\le 2$ case was settled in \cite[Theorem 5.1]{MZZ22}.

\begin{theorem}
Let $X$ be a smooth Fano threefold of Picard number $\rho(X)\le 2$ admitting a non-isomorphic surjective endomorphism $f$.
Then $X$ is toric. To be precise, $X$ is either $\mathbb{P}^3$, or a splitting $\mathbb{P}^1$-bundle over $\mathbb{P}^2$, 
or a blowup of $\mathbb{P}^3$ along a line.
\end{theorem}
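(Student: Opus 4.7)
The plan is to combine the rigidity afforded by the cone theorem for Fano varieties with the previously established $\rho(X)=1$ case and a case-by-case analysis along the Mori--Mukai classification of smooth Fano threefolds with Picard number $2$. When $\rho(X)=1$, the conclusion $X\cong\mathbb{P}^3$ is exactly the preceding theorem of Amerik--Rovinsky--Van de Ven, so I would reduce immediately to the case $\rho(X)=2$. Here, since $X$ is Fano, the closed cone $\NE(X)$ is rational polyhedral with exactly two extremal rays $R_1,R_2$. Because $f^*$ permutes the finite set of extremal rays of $\NE(X)$, after replacing $f$ by $f^2$ both $R_1$ and $R_2$ are $f^*$-invariant, so by the base point free theorem each contraction $\pi_i:X\to Y_i$ of $R_i$ is $f$-equivariant, descending $f$ to a surjective $f_i\in\SEnd(Y_i)$.

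Next I would classify the pair $(\pi_1,\pi_2)$. Since $X$ is a smooth threefold, Mori's theorem rules out small contractions, so each $\pi_i$ is either a divisorial contraction (necessarily a blowup of a smooth point or smooth curve in a smooth $Y_i$), a conic bundle over a smooth surface, or a del Pezzo fibration over $\mathbb{P}^1$. The Mori--Mukai list has $36$ families, and my goal is to show that only the three toric families survive: $\mathbb{P}(\OO_{\mathbb{P}^2}\oplus\OO_{\mathbb{P}^2}(a))$ for $a\in\{0,1,2\}$, together with the blowup of $\mathbb{P}^3$ along a line (which is also the projective bundle $\mathbb{P}(\OO_{\mathbb{P}^1}^{\oplus 2}\oplus\OO_{\mathbb{P}^1}(1))$ over $\mathbb{P}^1$).

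The main driver of the case analysis is that each $f_i$ on $Y_i$ inherits strong structure: if $\pi_i$ is divisorial with image a smooth Fano threefold of Picard number $1$, then $Y_i\cong \mathbb{P}^3$ by the $\rho=1$ case, and the center of the blowup is a smooth curve which is $f_i^{-1}$-periodic; checking which smooth curves in $\mathbb{P}^3$ admit such invariance and yield a smooth Fano threefold with $-K_X$ ample forces the center to be a line, yielding the blowup of $\mathbb{P}^3$ along a line. If $\pi_i$ is a conic bundle over a smooth surface $Y_i$, then $Y_i$ inherits a surjective endomorphism; applying the surface structural result of Theorem \ref{thm-surf} together with the fact that $Y_i$ is rationally connected (since $X$ is Fano), one forces $Y_i\cong\mathbb{P}^2$, and the discriminant curve must be $f_i^{-1}$-invariant. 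In most non-toric conic-bundle cases this discriminant has moduli, producing contradictions; the cases surviving this step are those with trivial discriminant, i.e.\ projective $\mathbb{P}^1$-bundles, and among those only the splitting ones can carry non-isomorphic surjective endomorphisms (here one uses Fakhruddin's periodic-point density and ramification considerations). If $\pi_i$ is a del Pezzo fibration over $\mathbb{P}^1$, then the generic fiber is a del Pezzo surface with a non-isomorphic endomorphism induced by $f$, which is severely restrictive (again Theorem \ref{thm-surf}), and combining with the other extremal contraction forces $X$ to be a product-like structure already appearing in the toric list.

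The hardest step is the comprehensive elimination of the $33$ non-toric Mori--Mukai families in the $\rho(X)=2$ case. The difficulty is that one must feed, for each family, both equivariance of the two Mori contractions and additional dynamical constraints --- for instance, the ramification divisor formula $K_X=f^*K_X+R_f$, the $f^{-1}$-periodicity of exceptional divisors and of discriminant loci, and the behavior of $f^*$ on $\N^1(X)$ relative to the two extremal rays --- into the explicit geometry of that family (degree of generic fibers, structure of blown-up curves, position of exceptional divisors). Most families admit a distinguished non-movable curve class or a positive-dimensional moduli of the Mori-theoretic data, and ruling out non-isomorphic endomorphisms amounts to showing that any such $f$ would have to fix this distinguished data in a way incompatible with being a finite cover. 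I expect this step to absorb the bulk of the proof, while the reductions above and the already-established $\rho=1$ theorem handle the conceptual framework.
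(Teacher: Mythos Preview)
The present paper is a survey and does not itself prove this theorem: it simply cites \cite[Theorem 5.1]{MZZ22} and moves on. Your overall strategy---first invoking the $\rho(X)=1$ theorem of Amerik--Rovinsky--Van de Ven, then for $\rho(X)=2$ iterating $f$ so that both extremal rays of the rational polyhedral cone $\NE(X)$ are $f^*$-fixed, making both Mori contractions $f$-equivariant, and finally running through the Mori--Mukai list---is exactly the architecture of the proof in \cite{MZZ22}, so at the level of outline there is nothing to compare.

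That said, several steps in your sketch are looser than the actual argument and would not close as written. First, when $\pi_i$ is divisorial you assume the target $Y_i$ is a \emph{smooth Fano} threefold of Picard number $1$; but Mori's list of extremal contractions from smooth threefolds includes types $E_3$, $E_4$, $E_5$ with singular targets, and even in type $E_1$/$E_2$ the target need not be Fano a priori, so you must either rule these out for Fano $X$ or treat them separately. Second, the crucial step ``only splitting $\mathbb{P}^1$-bundles over $\mathbb{P}^2$ admit non-isomorphic surjective endomorphisms'' is not a consequence of Fakhruddin's density of periodic points; in \cite{MZZ22} this is handled via Amerik's rigidity results \cite{Ame97, ARV99} on morphisms to Fano manifolds, and you should invoke that machinery explicitly. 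Third, in the del Pezzo fibration case the induced map $f|_{\mathbb P^1}$ may well be an isomorphism, so you cannot directly feed ``non-isomorphic endomorphism of the generic fibre'' into Theorem~\ref{thm-surf}; one instead exploits the \emph{other} extremal contraction simultaneously. None of these are fatal to your plan, but each requires a sharper tool than the one you name.
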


For the case of Picard number $\ge 3$, we obtain a conic bundle structure \cite[Theorem 6.1]{MZZ22}.
Here a fibration $\tau:X\to Y$ of smooth projective varieties is a \textit{conic bundle} if every fibre is isomorphic to a conic, i.e., 
a scheme of zeros of a nonzero homogeneous form of degree $2$ on $\mathbb{P}^2$.
Denote by $\Delta_\tau:=\{y\in Y~|~\tau~\textup{is not smooth over }y\}$
the \textit{discriminant} locus of $\tau$.
When $Y$ is a surface, either $\Delta_\tau=\emptyset$ or $\Delta_\tau$ is a (not necessarily irreducible) curve 
with only ordinary double points (cf.~\cite[Section 6]{MM83}).
If $X$ is further assumed to be Fano, then we say $\tau$ is a \textit{Fano conic bundle}.

\begin{theorem}
Let $X$ be a smooth Fano threefold with $\rho(X)\ge 3$.
Suppose that $X$ admits a non-isomorphic surjective endomorphism $f$.
Then $X$ admits a (Fano) conic bundle $\tau:X\to Y$.
\end{theorem}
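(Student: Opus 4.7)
The plan is to analyze the extremal rays of $\NE(X)$ directly, using the existence of the non-isomorphic endomorphism $f$ as a constraint. Since $X$ is a smooth Fano threefold, the cone theorem implies that $\NE(X)$ is rational polyhedral with finitely many extremal rays, and $f_*$ permutes them. Replacing $f$ by an iterate, we may assume $f_*$ fixes every extremal ray, so every extremal contraction of $X$ is automatically $f$-equivariant.

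By Mori's classification of extremal contractions on a smooth projective threefold, each extremal contraction from $X$ is one of three types: divisorial (types $E_1$ through $E_5$), a conic bundle (type $C$), or a del Pezzo fibration (type $D$) over a smooth curve. The task is to exhibit an extremal ray of type $C$. Suppose for contradiction that none exists, so every extremal ray of $X$ yields either a divisorial contraction or a del Pezzo fibration.

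We now invoke the Mori-Mukai classification of smooth Fano threefolds with $\rho(X) \ge 3$: the list of such $X$ is explicit, and only a short sublist fails to admit any conic bundle structure. For each entry on that sublist, the plan is to rule out the existence of a non-isomorphic surjective endomorphism using the following three tools. First, each $f$-invariant exceptional divisor $E \subset X$ produced by a divisorial contraction carries an induced non-isomorphic endomorphism $f|_E$; by the surface classification in Theorem \ref{thm-surf}, the pair $(E, f|_E)$ and the induced action on the normal bundle $\OO_X(E)|_E$ are severely restricted, which in turn restricts the blow-up picture of $X$. Second, any $f$-equivariant del Pezzo fibration $\pi : X \to B$ forces $f$ to descend to a non-isomorphic self-map of $B$, so that $B \cong \mathbb{P}^1$, and to restrict compatibly to the generic del Pezzo fiber, which is again pinned down by Theorem \ref{thm-surf}. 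Third, Wahl's Theorem \ref{thm-wah} controls the singularities of any intermediate target of a divisorial step. Combining these obstructions with the bookkeeping of $f^*$ on $\N^1(X)$ (through Theorem \ref{main-thm-cri}) and of the ramification divisor $R_f$ across $f$-invariant prime divisors, every remaining candidate on the Mori-Mukai sublist is eliminated, yielding the desired contradiction.

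The main obstacle is the case-by-case verification against the Mori-Mukai list. The hardest cases are those where $X$ admits only a del Pezzo fibration together with divisorial contractions, since one must check that the interaction between $f$, the fibration, the ramification divisor, and the $f$-action on the $f$-invariant exceptional divisors jointly forces an extra conic bundle structure that was not initially visible, contradicting the standing assumption. A careful enumeration, rather than a single uniform argument, seems unavoidable at this step.
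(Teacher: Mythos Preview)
The paper is a survey and does not itself prove this statement; it cites \cite[Theorem 6.1]{MZZ22}. I therefore assess your proposal on its own terms.

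Your trichotomy contains an error that, once corrected, sharpens the target considerably. An extremal contraction lowers the Picard number by exactly one, so a del Pezzo fibration $X\to B$ onto a curve would force $\rho(X)=\rho(B)+1=2$, and a contraction to a point would force $\rho(X)=1$. Since $\rho(X)\ge 3$ and Mori's theorem excludes small contractions from smooth threefolds, every extremal ray of $X$ is either divisorial (types $E_1$--$E_5$) or of conic-bundle type. Your entire del Pezzo branch is vacuous, and the theorem reduces to showing that \emph{not all} extremal rays of $X$ are divisorial. You never isolate this as the actual task.

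Several of your tools are unjustified as stated. The assertion that $f|_E$ is non-isomorphic on an $f^{-1}$-invariant exceptional divisor $E$ can fail: if $f^*E=(\deg f)\,E$ then $\deg(f|_E)=1$, and nothing in the hypothesis rules this out. Since $f$ is only assumed non-isomorphic, not int-amplified, you cannot invoke Theorem \ref{main-thm-cri} for eigenvalue constraints, and Proposition \ref{prop-iamp-amp} does not help unless you first show $f$ is amplified. Theorem \ref{thm-wah} concerns singularities of normal surfaces and has no evident role for the smooth $X$ under discussion. Most seriously, the ``careful enumeration'' against the Mori--Mukai list that you defer to the end is the entire content of the proof: you have not named a single Fano threefold with $\rho\ge 3$ and no conic-bundle ray, let alone eliminated it. What you have written is an outline whose decisive step is left as a black box, decorated with lemmas that do not obviously apply.
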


With the help of the Fano conic bundle structure, we can run the relative Minimal Model Program as discussed in \cite[Section 6]{MM83} 
and obtain the following special EMMP in the category of smooth Fano threefolds as stated in \cite[Theorem 6.2]{MZZ22}.

\begin{theorem}
Let $X$ be a smooth Fano threefold with a conic bundle $\tau:X\to Y$.
Suppose that $X$ admits a non-isomorphic surjective endomorphism $f$.
Then there exists an $f$-Equivariant (after iteration of $f$) Minimal Model Program (EMMP)
$$X=X_1\rightarrow \cdots \rightarrow X_i\rightarrow\cdots\rightarrow X_{r+1}\rightarrow Y$$
such that the following hold.

\begin{enumerate}
\item $r=\rho(X)-\rho(Y)-1$ and each $X_i$ is a smooth Fano threefold.
\item $\tau_{r+1}:X_{r+1}\to Y$ is a Fano contraction and an algebraic $\mathbb{P}^1$-bundle $\mathbb{P}_Y(\mathcal{E})$ over $Y$.
\item The composition $\tau_i:X_i\to Y$ is a conic bundle with
$(f|_Y)^{-1}$-invariant discriminant
$\Delta_{\tau_i}=C_i\cup\cdots \cup C_r$ a disjoint union of $r+1-i$ smooth curves on $Y$.
\item The composition $\pi:X\to X_{r+1}$ is the blowup of $X_{r+1}$ along $r$ disjoint union of
$(f|_{X_{r+1}})^{-1}$-invariant
smooth curves $\bigcup_{i=1}^r\overline{C_i}$ with $\tau_{r+1}(\overline{C_i})=C_i$.
\end{enumerate}
\end{theorem}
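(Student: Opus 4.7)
The plan is to construct the sequence by running the relative Minimal Model Program over $Y$ for the conic bundle $\tau : X \to Y$, and to upgrade the Mori--Mukai classification of Fano conic bundles \cite{MM83} to the equivariant setting using the EMMP machinery developed earlier in the paper. Since $X$ is a smooth Fano threefold, it is a Mori Dream Space by \cite{BCHM10}, so $\NE(X/Y)$ is a rational polyhedral cone with only finitely many $K_X$-negative extremal rays. The pullback $f^*$ permutes this finite set of extremal rays, so after replacing $f$ by a sufficiently divisible positive power we may assume $f^*$ fixes every relative extremal ray. The descent $f|_Y$ exists because $\tau$ is itself the contraction of an $f^*$-stable extremal face.

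The inductive step would proceed as follows. At stage $i$, pick a $K_{X_i}$-negative extremal ray $R_i$ of $\NE(X_i/Y)$ that has not yet been contracted; by the preceding paragraph, its contraction $\sigma_i : X_i \to X_{i+1}$ is $f_i$-equivariant. By the Mori--Mukai analysis of Fano conic bundles over a smooth surface (cf.~\cite[Section 6]{MM83}), $R_i$ is generated by an irreducible component of a reducible fibre of $\tau_i$, and its contraction is the blow-down of a ruled surface $E_i \to C_i$ along a smooth component $C_i$ of the discriminant $\Delta_{\tau_i}$. Crucially, $X_{i+1}$ remains a smooth Fano threefold equipped with a conic bundle $\tau_{i+1} : X_{i+1} \to Y$ whose discriminant equals $\Delta_{\tau_i} \setminus C_i$. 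The process terminates precisely when $\Delta = \emptyset$, producing a $\mathbb{P}^1$-bundle $\tau_{r+1} = \mathbb{P}_Y(\mathcal{E}) \to Y$.

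The count $r = \rho(X) - \rho(Y) - 1$ follows from $\rho(X_{i+1}) = \rho(X_i) - 1$ at each divisorial step together with $\rho(X_{r+1}) = \rho(Y) + 1$ for a $\mathbb{P}^1$-bundle. For assertion (3), the $(f|_Y)^{-1}$-invariance of each $C_i$ is forced by the $f_i$-equivariance of $\sigma_i$ combined with the intrinsic characterisation of the discriminant components under the descent of $f$. For assertion (4), the composition $\pi : X \to X_{r+1}$ is a tower of divisorial contractions whose centres $\overline{C_i} \subset X_{r+1}$ lie above the pairwise disjoint curves $C_i \subset Y$; this disjointness in $Y$ (itself a hallmark of Fano conic bundles in the Mori--Mukai classification) propagates to disjointness of the $\overline{C_i}$, so the tower collapses to a single blow-up along $\bigcup \overline{C_i}$.

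The main obstacle is ensuring that the smooth-Fano property and the equivariance both propagate through every stage. Smoothness and Fanoness of $X_{i+1}$ is exactly the Mori--Mukai input, and requires verifying that each contracted divisor $E_i$ is a $\mathbb{P}^1$-bundle over a smooth curve and that ampleness of $-K_{X_{i+1}}$ survives the contraction. Choosing a \emph{single} iterate of $f$ that simultaneously fixes all relevant extremal rays over the course of the whole EMMP is manageable because the total number of rays encountered is bounded by $\rho(X) - \rho(Y)$, so a sufficiently divisible power suffices. The delicate point is the interplay between the dynamical invariance of discriminant components under $f|_Y$ and the geometric rigidity supplied by the conic bundle structure; once this is secured, the remaining assertions fall out by straightforward Picard-number bookkeeping.
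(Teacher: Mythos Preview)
Your proposal is correct and follows essentially the same route the paper sketches: run the relative MMP over $Y$ for the Fano conic bundle using the Mori--Mukai structure theory of \cite[Section~6]{MM83}, and make each step equivariant by first iterating $f$ so that it fixes every extremal ray of the polyhedral cone $\NE(X)$ (hence descends to $Y$ and fixes every ray of $\NE(X/Y)$). The only cosmetic point is the logical order---you should establish the descent $f|_Y$ from the $f_*$-invariance of the face of $\NE(X)$ contracted by $\tau$ \emph{before} speaking of the action on $\NE(X/Y)$---and you might note explicitly that $Y$ is a smooth rational surface so its Brauer group vanishes, whence the terminal $\mathbb{P}^1$-bundle is indeed $\mathbb{P}_Y(\mathcal{E})$.
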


There remain two difficult issues:

\begin{itemize}
\item Confirming the splitting of $\mathcal{E}$ when $X=\mathbb{P}_Y(\mathcal{E})$ for some rank $2$ vector bundle $\mathcal{E}$ on $Y$, and
\item Verifying that each blowup in the EMMP is a toric blowup.
\end{itemize}

The splitting problem was dealt with in \cite[Theorems 6.4 and 7.1]{MZZ22} for the respectively polarized and non-polarized cases.
However, the EMMP in general is not a sequence of toric blowups. 
Indeed, one needs to choose very carefully a suitable EMMP to reach this goal as discussed in the proof of \cite[Theorems 8.1 and 7.1]{MZZ22} 
for the respectively int-amplified and non-int-amplified cases.

\begin{remark}\label{rmk-fano}
$ $
\begin{enumerate}
\item Theorem \ref{main-thm-toric} gives a partial answer (only for smooth Fano threefolds) to a question proposed by Fakhruddin \cite[Question 4.4]{Fak03}.
\item Jia and Zhong further developed the technique and proved that a smooth Fano fourfold 
admitting a conic bundle structure and an int-amplified endomorphism is indeed toric, see \cite{JZ20}.
\item Totaro classified smooth Fano 3-folds satisfying the Bott vanishing \cite[Theorem 0.1]{Tot23a}. 
Kawakami and Totaro proved that every normal projective variety admitting 
an int-amplified endomorphism $f$ satisfies the Bott vanishing (cf.~Theorem \ref{thm-KT}).
This way, Totaro \cite{Tot23b} obtained a different proof of Theorem \ref{main-thm-toric}, which
works for the known list of smooth Fano threefolds defined over an algebraically closed field $k$ with $\deg f$ invertible in $k$.
\end{enumerate}

\end{remark}

\section{Application to Broustet-Gongyo conjecture}\label{sec-lcy}

A generalisation of Question \ref{main-que-toric} is the following conjecture proposed by 
Broustet and Gongyo \cite{BG17}. They confirmed the conjecture for surfaces and Mori Dream Spaces, using the EMMP.
Recall that a normal projective variety $X$ is of {\it Calabi-Yau type} 
if $(X,\Delta)$ is an lc pair for some effective Weil $\Q$-divisor $\Delta$ such that $K_X+\Delta\sim_{\Q} 0$.
By the abundance (cf.~Theorem \ref{thm-gongyo}), the latter condition is equivalent to $K_X+\Delta\equiv 0$.
We also call such a pair $(X, \Delta)$ {\it log Calabi-Yau}.

\begin{conjecture}\label{main-conj-cy}
Let $X$ be a normal projective variety admitting a polarized endomorphism.
Then $X$ is of Calabi-Yau type.
\end{conjecture}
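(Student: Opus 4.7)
The plan is to reduce the conjecture, via Yoshikawa's relative Fano-type setup (Theorem \ref{thm-yoshikawa}), to constructing a log Calabi-Yau boundary on a quasi-\'etale cover of $X$ that fibers over an abelian variety, and then to finish by an Ambro-Fujino-Kawamata canonical bundle formula combined with Theorem \ref{thm-gongyo}.

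First, every polarized endomorphism is int-amplified, so Theorem \ref{thm-yoshikawa} supplies an $f$-equivariant quasi-\'etale Galois cover $\mu:\widetilde{X}\to X$ whose albanese map $\pi:\widetilde{X}\to A$ is a fibration with $(\widetilde{X},\widetilde{\Delta})$ klt and $-(K_{\widetilde{X}}+\widetilde{\Delta})$ $\pi$-ample for some effective $\mathbb{Q}$-divisor $\widetilde{\Delta}$. Since Calabi-Yau type descends along quasi-\'etale Galois covers (by averaging a $\Gal(\mu)$-invariant boundary to make it descend along $\mu$), it suffices to exhibit the structure on $\widetilde{X}$. To this end, relative base-point-freeness applied to $-m(K_{\widetilde{X}}+\widetilde{\Delta})$ for $m$ sufficiently divisible, after twisting by the pullback of an ample class from $A$ to pass from $\pi$-freeness to global freeness and applying Bertini, produces an effective $\mathbb{Q}$-divisor $D$ with $D\sim_{\mathbb{Q},A}-(K_{\widetilde{X}}+\widetilde{\Delta})$ such that $(\widetilde{X},\widetilde{\Delta}':=\widetilde{\Delta}+D)$ is log canonical.

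Next, because $K_{\widetilde{X}}+\widetilde{\Delta}'\sim_{\mathbb{Q},A}0$, the Ambro-Fujino-Kawamata canonical bundle formula yields
\[
K_{\widetilde{X}}+\widetilde{\Delta}'\sim_{\mathbb{Q}}\pi^{*}(K_{A}+B+M)\sim_{\mathbb{Q}}\pi^{*}(B+M),
\]
where $B\geq 0$ is the discriminant part, $M$ is the nef moduli part on $A$, and the last equivalence uses $K_{A}\sim_{\mathbb{Q}}0$. I would then exploit the dynamics: $\widetilde{f}$ descends to a polarized endomorphism $f_{A}$ on $A$ with multiplier $q>1$; equivariance of the canonical bundle formula yields a numerical relation of the form $f_{A}^{*}(B+M)\equiv q(B+M)$, and since $B+M$ is pseudo-effective while $q>1$, iterating forces $B+M\equiv 0$. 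Hence $K_{\widetilde{X}}+\widetilde{\Delta}'\equiv 0$, and Theorem \ref{thm-gongyo} upgrades this to $K_{\widetilde{X}}+\widetilde{\Delta}'\sim_{\mathbb{Q}}0$; pushing $(\widetilde{X},\widetilde{\Delta}')$ down along $\mu$ then delivers the sought log Calabi-Yau structure on $X$.

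The main obstacle is making the equivariance of the canonical bundle formula under $\widetilde{f}$ rigorous: the moduli $b$-divisor $M$ is defined only up to $\mathbb{Q}$-linear equivalence on some sufficiently high birational model of $A$, and one must track the polarized action on this model carefully, with a particular concern that $\widetilde{\Delta}'$ itself was constructed using a Bertini choice that is not a priori $\widetilde{f}$-invariant. A natural way around this is to replace $B+M$ by the Ces\`aro or appropriately weighted average of its $f_A$-orbit, which remains pseudo-effective while becoming a genuine $q$-eigenvector of $f_{A}^{*}$, and to combine this with the fact that the only pseudo-effective classes in the $q$-eigenspace of a polarized $f_{A}^{*}$ are proportional to an ample class on $A$, which on an abelian variety forces the class to be zero once it is bounded by an effective representative of $-K_{A}\sim_{\mathbb{Q}}0$. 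The Mori Dream Space and surface cases already settled by Broustet-Gongyo via their own EMMP provide both a template and a sanity check for the construction.
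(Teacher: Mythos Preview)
The statement you are trying to prove is an \emph{open conjecture}; the paper does not claim a proof of it, only of the special case of smooth projective threefolds (Theorem~\ref{main-thm-lcy}). Your outline, if it worked, would settle the full conjecture, so it is worth locating precisely where it breaks.

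There are two genuine gaps. First, the very first reduction is illegitimate in the stated generality: Theorem~\ref{thm-yoshikawa} requires $X$ to be $\Q$-factorial klt, whereas Conjecture~\ref{main-conj-cy} is posed for an arbitrary normal projective variety (one does not even know $K_X$ is $\Q$-Cartier, let alone that $X$ is klt). Second, and more seriously, even under the $\Q$-factorial klt hypothesis your key dynamical step is unsupported. Neither Yoshikawa's boundary $\widetilde{\Delta}$ nor your Bertini divisor $D$ bears any relation to $\widetilde{f}$, so the pair $(\widetilde{X},\widetilde{\Delta}')$ is not $\widetilde{f}$-compatible in any sense, and there is simply no equation of the form $f_A^{*}(B+M)\equiv q(B+M)$ to iterate. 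Ces\`aro averaging cannot manufacture an eigenvector relation that was never there: the pullbacks $(f_A^{n})^{*}(B+M)$ are governed by all eigenvalues of $f_A^{*}|_{\N^1(A)}$, not just $q$, and on an abelian variety a $q$-polarized isogeny need not act as $q\cdot\id$ on $\N^1(A)$. Your closing claim that $B+M$ is ``bounded by an effective representative of $-K_A\sim_{\Q}0$'' is likewise unjustified; the canonical bundle formula gives no such upper bound.

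For comparison, the paper's proof of the threefold case follows your opening moves (EMMP, Yoshikawa, canonical bundle formula) but then does \emph{not} attempt a direct equivariance argument on $B+M$. Instead it reduces to Conjecture~\ref{main-conj-lc} on the fibres, which is itself open even for $\mathbb{P}^2$; the remaining hard case---a $\mathbb{P}^2$-bundle over an elliptic curve with $-K_X$ big---is handled by an ad hoc study of the anti-canonical Iitaka fibration, narrowing down to the single bundle $\mathbb{P}_Y(\mathcal{F}_2\oplus\mathcal{L})$. That this much effort is needed already in dimension three is a strong indication that the general conjecture does not fall to the formal argument you propose.
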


For the $X$ in the above conjecture, assuming further that the canonical divisor $K_X$ is $\Q$-Cartier,
Broustet and H\"oring \cite{BH14} proved that $X$ has at worst log canonical singularities; see also Theorem \ref{thm-wah}.
Conjecture \ref{main-conj-cy} has been verified for rationally connected smooth projective varieties 
by Yoshikawa since Fano type is Calabi-Yau type; see \cite{Yos21} or Theorem \ref{thm-yoshikawa}.
Based on this, the following theorem in \cite{Men22} fully solved the case of smooth projective threefolds.

\begin{theorem}\label{main-thm-lcy}
Let $X$ be a smooth projective threefold admitting a polarized endomorphism $f$.
Then $X$ is of Calabi-Yau type.
\end{theorem}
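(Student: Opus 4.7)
The plan is to combine Yoshikawa's equivariant Albanese structure (Theorem \ref{thm-yoshikawa}) with a canonical bundle formula descent over an abelian variety. Since $f$ is polarized it is in particular int-amplified, so Theorem \ref{thm-yoshikawa} supplies an $f$-equivariant quasi-\'etale cover $\nu:\widetilde X\to X$ together with the Albanese fiber space $\widetilde\phi:\widetilde X\to A$ and an effective $\Q$-divisor $\widetilde\Delta$ making $(\widetilde X,\widetilde\Delta)$ klt with $-(K_{\widetilde X}+\widetilde\Delta)$ being $\widetilde\phi$-ample. Because quasi-\'etale covers both lift and descend the log Calabi-Yau property (log canonicity pulls back by the Hurwitz formula, and a boundary on the cover can be Galois-averaged and pushed down), it suffices to produce a log Calabi-Yau boundary on $\widetilde X$.

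I would then split into cases by $d:=\dim A$. When $d=0$ the variety $\widetilde X$ is of Fano type and hence of Calabi-Yau type. When $d=3$ the fiber space $\widetilde\phi$ is birational onto a smooth abelian threefold; minimality of abelian varieties together with the relative Fano hypothesis forces $\widetilde\phi$ to be an isomorphism, so $\widetilde X\cong A$ is Calabi-Yau. The substance of the theorem therefore lies in $d\in\{1,2\}$, where the general fiber is a log del Pezzo curve or surface.

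For $d\in\{1,2\}$, I would cut the relative Fano class down to triviality and then globalize via Ambro's canonical bundle formula. For sufficiently divisible $m$, take a general $\Gamma\in\bigl|-m(K_{\widetilde X}+\widetilde\Delta)/A\bigr|$ and put $\widetilde\Delta':=\widetilde\Delta+\tfrac1m\Gamma$; then $(\widetilde X,\widetilde\Delta')$ is lc with $K_{\widetilde X}+\widetilde\Delta'\sim_{\Q/A}0$, and the canonical bundle formula gives $K_{\widetilde X}+\widetilde\Delta'\sim_{\Q}\widetilde\phi^{\ast}(B+M)$ for an lc boundary $B$ and a nef moduli part $M$ on $A$ (using $K_A\sim 0$). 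The endomorphism $\widetilde f$ descends to a polarized $f_A$ on $A$ whose action on $\N^{1}(A)$ has all eigenvalues of modulus strictly greater than $1$ (Theorem \ref{main-thm-cri}), so any class invariant under some power of $f_A^{\ast}$ is numerically trivial. Choosing $\Gamma$ equivariantly (by averaging over an iterate of $\widetilde f$) makes $B+M$ invariant under a power of $f_A^{\ast}$, forcing $B+M\equiv 0$ and hence $K_{\widetilde X}+\widetilde\Delta'\equiv 0$. Gongyo's abundance (Theorem \ref{thm-gongyo}) then upgrades the numerical to $\Q$-linear triviality, so $\widetilde X$, and therefore $X$, is of Calabi-Yau type.

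The main obstacle I anticipate is the equivariance of $B+M$ under $f_A^{\ast}$: the moduli part $M$ is intrinsically a $b$-divisor and is not automatically respected by the dynamics, so one must pass to an appropriate birational model of $A$ on which $M$ becomes an honest divisor and track the equivariance through the canonical bundle formula. The threefold hypothesis $\dim X=3$ (whence $\dim A\le 2$) keeps the base small and the fiberwise geometry simple enough (curves or log del Pezzo surfaces) that the $f$-equivariant surface EMMP of Theorem \ref{thm-surf}, together with the relative log Calabi-Yau structure on the fibers, can be used to pin $M$ down explicitly. I expect this equivariant descent, rather than the geometric setup, to carry the real weight of the argument.
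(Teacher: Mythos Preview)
Your reduction via Yoshikawa's cover and the canonical bundle formula is a natural first attempt, but the equivariance step has a genuine gap. You propose to force $f_A^*(B+M)\equiv B+M$ by ``choosing $\Gamma$ equivariantly (by averaging over an iterate of $\widetilde f$)''. This is not a well-defined operation: $\widetilde f$ is a polarized endomorphism of degree $>1$, not an automorphism, so there is no finite group to average over, and pullback by $\widetilde f$ rescales divisor classes, so no finite average of the $(\widetilde f^k)^*\Gamma$ returns to the original linear system. More fundamentally, the boundary $\widetilde\Delta$ supplied by Theorem \ref{thm-yoshikawa} is an existential, non-canonical choice with no relation to $\widetilde f$; hence $\widetilde\Delta'=\widetilde\Delta+\tfrac1m\Gamma$, and with it both $B$ and $M$, carry no built-in compatibility with $f_A^*$. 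Your ``main obstacle'' paragraph locates the difficulty in the moduli $b$-divisor $M$, but the discriminant $B$ is just as non-equivariant, and invoking the surface EMMP of Theorem \ref{thm-surf} does not supply the missing invariance.

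The paper proceeds differently for exactly this reason. In place of an auxiliary boundary it works with the ramification divisor, via $K_X=f^*K_X+R_f$, which is canonically attached to $f$ and hence genuinely compatible with the dynamics. The cost is that one must then verify that the resulting pair is lc on the fibres, i.e., Gongyo's Conjecture \ref{main-conj-lc}, and this is open already for $\mathbb{P}^2$. The actual content of Theorem \ref{main-thm-lcy} therefore lies in the one leftover case, a $\mathbb{P}^2$-bundle $\pi:X\to Y$ over an elliptic curve $Y$: one applies the Iitaka fibration of $-K_X$ to reduce to $-K_X$ big, and then pins the bundle down to $X\cong\mathbb{P}_Y(\mathcal{F}_2\oplus\mathcal{L})$ (with $\mathcal{F}_2$ the indecomposable rank-$2$ bundle of degree $0$ with sections and $\deg\mathcal{L}<0$), where the log Calabi-Yau structure is produced explicitly. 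Your outline neither reaches nor anticipates this case analysis, which is where the proof's real work lies.
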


We briefly explain the strategy of the proof of Theorem \ref{main-thm-lcy}.
We first run the $f$-Equivariant Minimal Model Program which ends with a $Q$-abelian variety $Y$, i.e., 
a quasi-\'etale (or \'etale in codimension one) quotient of an abelian variety, see Theorem \ref{main-thm-GMMP}.
Since the smooth rationally connected case has been verified by Yoshikawa, one may assume $\dim Y>0$.
Then we study the fibration $\pi:X\to Y$ and its $f$-periodic general fibres.
By applying the canonical bundle formula and ramification divisor formula, the natural idea is 
to reduce the problem to the following Conjecture \ref{main-conj-lc} (for surfaces) proposed by Gongyo.

\begin{conjecture}[Gongyo]\label{main-conj-lc}
Let $f:X\to X$ be a $q$-polarized endomorphism of a smooth projective variety $X$.
Then $(X,\frac{R_f}{q-1})$ is an lc pair after iteration of $f$.
\end{conjecture}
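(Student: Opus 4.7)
\medskip

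\noindent\textbf{Proof proposal.} Set $\Delta_n := R_{f^n}/(q^n-1)$; since $f^n$ is $q^n$-polarized, proving the conjecture for $f^n$ yields it for $f$, so it suffices to find an iterate $n$ for which $(X, \Delta_n)$ is lc. The plan couples an intersection-theoretic bound on the coefficients of $\Delta_n$ with the equivariant MMP of Theorem \ref{main-thm-GMMP}, reducing lc-ness to the trivial base case of a $Q$-abelian variety.

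\medskip

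\noindent\textbf{Step 1 (coefficient bound).} The telescoping identity $R_{f^n} = \sum_{i=0}^{n-1}(f^i)^*R_f$ and the ramification chain rule give $\mathrm{mult}_D(R_{f^n}) = e_D(f^n) - 1$ with $e_D(f^n) = \prod_{i=0}^{n-1} e_{f^i(D)}$, so the boundary condition becomes $e_D(f^n) \leq q^n$. For an $f$-periodic prime divisor $D = D_0$ of period $p$ along the cycle $D_0 \to D_1 \to \cdots \to D_{p-1} \to D_0$ with ramification indices $e_0, \ldots, e_{p-1}$, I would intersect $f^* D_{i+1} = e_i D_i + Z_i$ ($Z_i$ effective) with $L^{n-1}$; the projection-formula identity $f^*D_{i+1} \cdot L^{n-1} = q(D_{i+1} \cdot L^{n-1})$ (from $(f^*L)^{n-1} = q^{n-1}L^{n-1}$ and $\deg f = q^{\dim X}$) yields $e_i(D_i \cdot L^{n-1}) \leq q(D_{i+1} \cdot L^{n-1})$; multiplying cyclically and cancelling the positive integers $D_i \cdot L^{n-1}$ gives $\tilde e := e_0 \cdots e_{p-1} \leq q^p$. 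A parallel chain argument along the initial segment of a preperiodic prime divisor $D$ with $f^m(D) = D_j$ bounds $e_D(f^m)/q^m \leq (D_j \cdot L^{n-1})/(D \cdot L^{n-1})$; combined with $\tilde e/q^p < 1$, this forces $\mathrm{mult}_D(\Delta_{m+kp}) \lesssim (D_j \cdot L^{n-1})(\tilde e/q^p)^k \to 0$ uniformly in $D$ (since $D \cdot L^{n-1} \geq 1$ on smooth $X$). In the extremal case $\tilde e = q^p$ the inequalities collapse to equalities, forcing $Z_i = 0$ and hence $f^{-1}(D_{i+1}) = D_i$ set-theoretically, which rules out strictly preperiodic divisors into the cycle. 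Taking $n$ to be a sufficiently large common multiple of the finitely many periods arising in $\Supp(R_f)$ thus places every coefficient of $\Delta_n$ in $[0,1]$.

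\medskip

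\noindent\textbf{Step 2 (lc via EMMP).} Since polarized implies int-amplified, Theorem \ref{main-thm-GMMP} provides, after further iteration (absorbed into $n$), an $f$-equivariant sequence $X = X_0 \dashrightarrow X_1 \dashrightarrow \cdots \dashrightarrow X_r = Y$ ending on a $Q$-abelian variety $Y$, with each descended endomorphism $f_i$ still polarized. The lift of $f_r$ to a quasi-\'etale abelian cover $A \to Y$ is forced to be \'etale because $K_A \equiv 0$, whence $R_{f_r} = 0$ and the terminal pair $(Y, \Delta_{r,n}) = (Y, 0)$ is trivially lc. I would then propagate lc-ness back up the MMP using the intrinsic relation $(q^n-1)(K_{X_i} + \Delta_{i,n}) = q^n K_{X_i} - f_i^* K_{X_i}$ to characterize $\Delta_{i,n}$ at each stage: for divisorial contractions and flips, standard log-discrepancy comparisons (using equivariance of $f_i$ and that $K$-negative extremal contractions do not decrease discrepancies of valuations centered in the exceptional locus) transfer lc-ness; for the Fano contraction $X_{r-1} \to Y$ one invokes Kawamata's canonical bundle formula to express $K_{X_{r-1}} + \Delta_{r-1,n}$ as the pullback of $K_Y$ plus a discriminant-and-moduli contribution on $Y$, and then reconciles this with the ramification of $f_{r-1}$ along the fibers.

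\medskip

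\noindent\textbf{Main obstacle.} The hardest step is the Fano contraction: the canonical bundle formula introduces a discriminant divisor on the base whose coefficients depend on multiplicities of singular fibers, and matching these against $R_{f_{r-1}}$ demands fine control of the $f_{r-1}$-action on the family of fibers. In particular, the extremal case $\tilde e = q^p$ along a periodic orbit contained in a general fiber saturates a coefficient at exactly $1$, so even a single non-snc intersection of such components could destroy lc-ness; ruling this out seems to require either Yoshikawa's finer Fano-type structure (Theorem \ref{thm-yoshikawa}) to present $X$ as Fano-type over $Y$, or, for the smooth-threefold case of interest in Theorem \ref{main-thm-lcy}, Bott vanishing of Kawakami--Totaro (Theorem \ref{thm-KT}) combined with the surface classification of Theorem \ref{thm-surf} applied to the general $f^n$-periodic fibers of the Fano contraction. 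Establishing this fiberwise snc configuration of $\Supp(R_f)$ along periodic orbits appears to be the principal hurdle to a complete proof.
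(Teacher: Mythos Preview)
The statement you are attempting to prove is Conjecture~\ref{main-conj-lc}, which the paper presents as an \emph{open conjecture} due to Gongyo, not as a theorem. Immediately after stating it, the paper writes that ``in dimension two, Conjecture~\ref{main-conj-lc} is not known even for $\mathbb{P}^2$, which is, in fact, the only leftover case.'' There is thus no proof in the paper to compare against; your proposal is an attack on an open problem.

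Your proposal has a structural gap that coincides exactly with the difficulty the paper isolates. In Step~2 you run the EMMP of Theorem~\ref{main-thm-GMMP} down to a $Q$-abelian base $Y$ and then try to pull lc-ness back up. But when $X$ is rationally connected---in particular for $X=\mathbb{P}^2$---the base $Y$ is a point and the whole EMMP collapses to the single Fano contraction $\mathbb{P}^2\to\{\mathrm{pt}\}$. Propagating lc-ness from a point via the canonical bundle formula is vacuous: there is no discriminant or moduli part on a point, so you are thrown back on proving lc-ness of $(\mathbb{P}^2,\Delta_n)$ directly. Your ``Main obstacle'' paragraph essentially concedes this, but the tools you list (Theorems~\ref{thm-yoshikawa}, \ref{thm-KT}, \ref{thm-surf}) do not control the singularities of the \emph{plane curve} $R_{f^n}$, which is precisely what lc-ness of $(\mathbb{P}^2,\Delta_n)$ demands.

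Step~1 also delivers less than you need. Even granting the coefficient bound (which, as written, treats only preperiodic components and does not supply a single iterate $n$ that works uniformly for all wandering components appearing in $\Supp(R_{f^n})$), having every coefficient in $[0,1]$ only says that $(X,\Delta_n)$ is a legitimate pair. Log canonicity is a constraint on discrepancies over \emph{all} birational models: on a smooth surface it forces the weighted curve $\Delta_n$ to have only mild singularities (for instance, at a point where several branches of coefficient~$1$ meet, at most two branches are allowed, and they must cross transversally). Nothing in your argument rules out $R_{f^n}$ acquiring, say, an ordinary triple point whose three local branches all carry coefficient~$1$; that already violates lc. Excluding such configurations for iterates of a polarized endomorphism of $\mathbb{P}^2$ is exactly the open problem the paper flags.
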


However, in dimension two, Conjecture \ref{main-conj-lc} is not known even for $\mathbb{P}^2$, which is, in fact, the only leftover case.
So the main difficulty of proving Theorem \ref{main-thm-lcy} remains in the case where $\pi:X\to Y$ is a $\mathbb{P}^2$-bundle over an elliptic curve $Y$.
By applying the Iitaka fibration of the anti-canonical divisor, we are reduced to the case 
where the anti-canonical divisor $-K_X$ is big (i.e., it is the sum of an ample divisor and an effective divisor), see \cite[Section 5]{Men22}.
Then the problem boils down to the very concrete case: $X\cong\mathbb{P}_Y(\mathcal{F}_2\oplus \mathcal{L})$ 
where $\mathcal{F}_2$ is the unique indecomposable rank 2 vector bundle of degree $0$
with non-trivial global sections and $\mathcal{L}$ is a line bundle of negative degree.

\section{Application to Kawaguchi-Silverman conjecture}

In this section, we work over a number field $K$ with a fixed algebraic closure $\overline{K}$.
Let $f: X \to X$ be a surjective endomorphism of a normal projective variety $X$.
One may consider two dynamical invariants: the (first) {\it dynamical degree} 
$$\delta_f= \lim_{n \to +\infty} ((f^n)^*H \cdot H^{\dim (X) -1})^{1/n}$$
of $f$ and the {\it arithmetic degree} 
$$\alpha_f(x)=\lim_{n \to +\infty} h_H(f^n(x))^{1/n}$$
of $f$ at $x \in X(\overline K)$.
Here $H$ is any ample divisor on $X$ and $h_{H}$ is (the maximum of $1$ and) the Weil height function associated with $H$.
We refer to \cite[Part B]{HS00}, \cite{KS16b}, or \cite[Section 2.2]{Mat20} for the detailed definition of the {\it Weil height function}.
We refer to \cite[Section 2]{MMSZ23} for some basic properties of these invariants. 

The dynamical degree reflects the geometric complexity of iterations of $f$.
On the other hand, the arithmetic degree reflects the arithmetic complexity of the given (forward) $f$-{\it orbit}
$O_f(x) :=\{ x, f(x), f^2(x), \ldots \}$.

One naturally likes to compare these two invariants.
By Kawaguchi--Silverman \cite{KS16a}, the arithmetic degree at any point is less than or equal to the dynamical degree (see also \cite{Mat20}).
Then, one wonders when the arithmetic degree at a point attains (its upper bound) the dynamical degree.
Kawaguchi and Silverman proposed (see \cite{KS16a}):

\begin{conjecture}\label{conj_ks} ( {\bf Kawaguchi-Silverman Conjecture, KSC for short})
Let $f: X \to X$ be a surjective endomorphism of a projective variety $X$ defined over a number field $K$.
Let $x \in X(\overline K)$ such that $ \alpha_{f}(x)< \delta_{f}$.
Then the orbit $O_f(x)$ is not Zariski dense in
$X_{\overline{K}} := X \times_K \overline{K}$.
\end{conjecture}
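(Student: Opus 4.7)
The plan is to leverage the equivariant minimal model program (EMMP) developed earlier in the paper to decompose an int-amplified endomorphism $f : X \to X$ into building blocks on which KSC is more tractable: $Q$-abelian varieties and rationally connected, Fano-type varieties arising as fibres of Fano contractions. After replacing $f$ by a suitable iterate, Theorem \ref{main-thm-GMMP} supplies an $f$-equivariant sequence $X = X_0 \dashrightarrow \cdots \dashrightarrow X_r = Y$ with $Y$ a $Q$-abelian variety, each step being a divisorial contraction, a flip, or a Fano contraction. The strategy is to track $\alpha_f(x)$ and $\delta_f$ step by step down this chain and then dispatch the two base cases separately.

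First I would establish the compatibility of both invariants with birational contractions and flips: Weil heights match up to bounded error on common open loci, so $\alpha_{f_i}(x) = \alpha_{f_{i+1}}(\pi_i(x))$ whenever both sides are defined, and dynamical degrees are preserved after pulling back through a common resolution. Zariski density of orbits also propagates along these birational steps. For a Fano contraction $\pi : X' \to X''$ one has product-type identities of the shape $\delta_{f'} = \max\{\delta_{f''},\ \delta_{f'|_F}\}$ on a general fibre $F$, together with analogous upper-bound decompositions for arithmetic degrees. This reduces KSC on $X$ to KSC on $Y$ together with KSC on the (Fano-type, rationally connected) fibres of each Fano contraction.

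Next I would handle the two base cases. On the $Q$-abelian $Y$, pull the question back through the abelian Galois cover $A \to Y$ from Theorem \ref{main-thm-GMMP}(1): any lift of $f_r$ to $A$ is an isogeny composed with a translation, and KSC for such endomorphisms of abelian varieties is known through Silverman's theory of canonical heights and the structure of endomorphism algebras; a descent argument using the finiteness of the Galois group transfers the result to $Y$. On the Fano-type fibres, the restriction $f|_F$ is again int-amplified (using Theorem \ref{main-thm-cri} together with the fact that $f^*$ preserves the fibration), and one then invokes KSC for int-amplified endomorphisms of Fano varieties in the relevant dimension, which is tractable in low dimensions thanks to the explicit toric/product classification from Sections 8 and 9.

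The main obstacle will be the Fano contraction step itself. It is straightforward to lift orbits from the base and to bound arithmetic degrees fibrewise, but it is genuinely subtle to argue that non-density on the base combined with non-density on each periodic fibre forces non-density on the total space: an orbit could a priori wander across infinitely many distinct fibres whose closure is a proper subvariety not dominated by the fibration. My plan here is to combine Fakhruddin's density of periodic points (Theorem \ref{FakThm}) to produce a plentiful supply of $f$-periodic subvarieties, with a Mordell--Lang type height analysis on the semi-abelian base $A$ to constrain how a single orbit can distribute among the fibres. A secondary difficulty is that removing the int-amplified hypothesis is not covered by Theorem \ref{main-thm-GMMP}; for general surjective $f$ one would first reduce to the amplified or quasi-amplified setting using the machinery of Section 7 and, in dimension two, the surface classification of Theorem \ref{thm-surf}, before feeding into the strategy above.
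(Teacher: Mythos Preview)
The statement you are attempting to prove is labeled \emph{Conjecture} in the paper, not Theorem: the Kawaguchi--Silverman Conjecture is an open problem, and the paper does not offer a proof of it. What the paper does is survey the partial results in Theorem~\ref{thm-ksc} and explain, via Theorem~\ref{main-thm-tir}, how the EMMP reduces KSC (in the int-amplified setting) to the highly restrictive Case~TIR, which is itself only conjecturally vacuous and is confirmed only in low dimensions or under strong hypotheses. So there is no ``paper's own proof'' to compare against; your proposal is a strategy for an open conjecture, not a proof of a stated result.

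Viewed as a strategy, your outline is broadly aligned with how the paper organises the known partial results, but it has genuine gaps even at that level. First, the conjecture is stated for an arbitrary surjective endomorphism $f$, while Theorem~\ref{main-thm-GMMP} requires $X$ to admit \emph{some} int-amplified endomorphism; your last paragraph gestures at removing this hypothesis via Section~7 and the surface classification, but Section~7 treats only varieties of klt Calabi--Yau type, and the surface results are dimension-specific, so this does not close the gap in general. Second, even granting int-amplified, your ``base case'' on Fano-type fibres is not actually settled: the paper makes explicit that the reduction bottoms out in Case~TIR, which is open beyond the cases listed in Theorem~\ref{thm-ksc}. Third, the product formula $\delta_{f'} = \max\{\delta_{f''}, \delta_{f'|_F}\}$ you invoke for Fano contractions is not established in the paper and is not generally available in that form; the actual arguments in \cite{MZ22, MMSZ23} proceed differently, via the effectivity of $-K_X$, equivariant Chow reduction, and a carefully chosen EMMP, rather than a clean fibrewise splitting of degrees.
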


\begin{remark}
$ $
\begin{enumerate}
\item The original conjecture in \cite{KS16a} has also been formulated for rational maps.
For endomorphisms, Kawaguchi and Silverman proved that $\alpha_{f}(x)$ equals either $1$ or the modulus of an eigenvalue of $f^*|_{\NS(X)}$. 
For rational maps, the existence of the limit defining $\alpha_{f}(x)$ is the first part of the original conjecture, and Matsuzawa \cite[Theorem 1.4]{Mat20} proved that the supremum limit still satisfies 
$$\overline{\alpha}_{f}(x):=\limsup\limits_{n \to +\infty} h_H(f^n(x))^{1/n}\le \delta_f.$$
\item The KSC was further generalised to the (stronger) small Arithmetic Non-Density conjecture (sAND for short) in \cite{MMSZ23}, 
see \cite[Conjecture 1.3 and Remark 1.4]{MMSZ23}.
The sAND conjecture implies the KSC and is much harder than the KSC with obstacles proposed as three questions in \cite[Section 8]{MMSZ23}.
We refer to \cite{BMS21,MMSZZ20} for recent progress towards these questions.
\end{enumerate}

\end{remark}

We now summarise the known results on KSC till the very recent, to the best of our knowledge.

\begin{theorem}\label{thm-ksc}
The KSC holds true for a surjective endomorphism $f$ 
on a projective variety $X$ which fits one of the following cases.
\begin{enumerate}
\item $f$ is polarized (cf. \cite{KS14}).
\item $X$ is a Mori dream space (cf. \cite{Mat20a}).
\item $\dim(X)\le 2$ (for smooth case, see \cite{MSS18}; for singular case, see \cite{MZ22}).
\item $X$ is a Hyperk\"ahler manifold (cf. \cite{LS21}).
\item $X$ is a ($Q$)-abelian variety (cf. \cite{Sil17}).
\item $X$ is a smooth rationally connected projective variety admitting an int-amplified endomorphism (see \cite{MZ22} and \cite{MY22} for respectively threefolds and $n$-folds).
\item $X$ is a projective threefold which admits an int-amplified endomorphism and has at worst $\Q$-factorial terminal singularities (cf. \cite[Corollary 6.19]{MMSZ23}).
\item $X$ is a smooth projective threefold with $q(X)>0$ and $f$ is an automorphism (cf. \cite{CLO22}).
\end{enumerate}
\end{theorem}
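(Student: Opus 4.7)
The plan is to reduce each of the eight cases to one of three base settings where KSC admits a direct verification: polarized endomorphisms (case 1), $(Q)$-abelian varieties (case 5), and Mori dream spaces (case 2). The unifying engine is the EMMP developed in the earlier sections, notably Theorem \ref{main-thm-GMMP} and Theorem \ref{thm-emmp-sur}, together with the invariance of the dynamical degree under birational modifications and the compatibility of arithmetic degrees with equivariant surjective morphisms (so that a dense $O_f(x)$ on $X$ pushes to a dense $O_{f_i}(\pi(x))$ on each $X_i$).

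For the base cases, my approach would go as follows. In case (1), a Tate-style telescoping argument applied to $f^*L \sim qL$ with $L$ ample and $q>1$ yields a canonical height $\hat h_L$ satisfying $\hat h_L \circ f = q \hat h_L$, and the locus $\{\hat h_L = 0\}$ is a proper Zariski-closed subset; any point outside satisfies $\alpha_f(x) = q = \delta_f$, so the contrapositive $\alpha_f(x) < \delta_f$ forces $O_f(x)$ into a proper subvariety. Case (5) reduces via the quasi-\'etale cover to an abelian variety, where one identifies $\alpha_f(x)$ with the modulus of a specific eigenvalue of $f^*|_{\N^1}$ attached to $x$ via the translation coordinates, allowing explicit verification. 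Case (2) uses that on a Mori dream space, $\PE(X)$ is rational polyhedral, so after iteration $f^*$ acts as a diagonal matrix on $\N^1(X)$ (as in the proof of Proposition \ref{prop-iamp-amp}), and the comparison of $\alpha_f(x)$ with $\delta_f$ becomes eigenvalue bookkeeping along the extremal rays.

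For cases (3), (6), (7), I would run the EMMP $X = X_1 \dashrightarrow \cdots \dashrightarrow X_r = Y$ of Theorem \ref{main-thm-GMMP} (or Theorem \ref{thm-emmp-sur} in dimension two), landing on a $Q$-abelian base $Y$. At each step, divisorial and flipping contractions preserve both $\delta_f$ and arithmetic degrees (up to orbits contained in exceptional loci, which are non-dense), while Fano contractions introduce fibres of Fano type to which case (2) applies; the $Q$-abelian base is handled by case (5). If $X$ is smooth and rationally connected, then $Y$ is a point and $X$ itself is of Fano type, placing us in the Mori dream space regime; the $\Q$-factorial terminal threefold case is handled similarly by fibre-wise analysis over the lower-dimensional $Y$. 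Cases (4) and (8) exploit additional structure: the Beauville--Bogomolov form on a Hyperk\"ahler manifold constrains $f^*|_{\N^1(X)}$ to a concrete quadratic action, and for an automorphism of a smooth threefold with $q(X)>0$ the Albanese morphism is automatically $f$-equivariant, reducing to dynamics on the Albanese.

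The hard part, I expect, will be case (6) in full generality: for rationally connected varieties in dimension $>3$, one ultimately needs Question \ref{main-que-toric}, namely the conjectural toric nature of such $X$, to even pin down the shape of $f^*|_{\N^1(X)}$. This is why the known results either restrict to threefolds (using Theorem \ref{main-thm-toric} and the conic bundle analysis of Section 8) or else impose an explicit classification input, and extending beyond these situations is the genuine obstacle rather than a routine EMMP extension.
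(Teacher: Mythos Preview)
This is a survey theorem whose ``proof'' in the paper consists of citations plus a brief explanation of the EMMP strategy for cases (3), (6), (7), (8). Your outline diverges from that explanation in a way that matters.

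For cases (3), (6), (7), the paper's route is \emph{not} a direct ``run EMMP, then apply the Mori dream space case (2) on Fano fibres and the $Q$-abelian case (5) on the base.'' The issue with your fibre-wise reduction is the Fano contraction step: if $\pi:X_i\to X_{i+1}$ is a Fano contraction and $\delta_{f_i}>\delta_{f_{i+1}}$, knowing KSC on the base and on fibres does not by itself yield KSC on $X_i$, because a dense orbit on $X_i$ can push to a dense orbit on $X_{i+1}$ with $\alpha_{f_{i+1}}(\pi(x))=\delta_{f_{i+1}}<\delta_{f_i}$, and there is no obvious mechanism forcing $\alpha_{f_i}(x)$ up to $\delta_{f_i}$ from fibrewise information alone. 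The paper instead reduces (via Theorem \ref{main-thm-tir}) to the sharply constrained \textbf{Case TIR}, where $-K_X$ is nef of Kodaira dimension $0$, the ramification divisor is supported on a single $f$-totally-invariant prime divisor $D\sim_{\Q}-K_X$, and $\delta_f>\delta_{f|_Y}$ for some Fano contraction; one then argues that Case TIR simply cannot occur under the hypotheses of (3), (6), (7). This is the missing idea in your sketch.

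Two further corrections. First, your final paragraph says case (6) in dimension $>3$ is essentially open and would require Question \ref{main-que-toric}; but case (6) is stated as \emph{proven} for all $n$ by \cite{MY22}, and the argument does not pass through the toric conjecture---it goes through showing Case TIR is vacuous for rationally connected smooth $X$. Second, for case (8) the Albanese map alone is not enough: as the paper explains, one uses Lesieutre's classification of positive-entropy automorphisms of smooth threefolds to obtain an $f$-equivariant conic bundle with $\delta_f$ preserved, thereby reducing to the surface case (3) rather than to abelian dynamics.
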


\begin{remark}
In fact, with \cite[Theorems 1.5 and 3.1]{MMSZ23} we confirm the stronger sAND \cite[Conjecture 1.3]{MMSZ23} and hence the KSC 
for Cases (1) - (7) in Theorem \ref{thm-ksc}, see \cite[Remark 1.4]{MMSZ23}.
However, the sAND remains unsolved for Case (8) in Theorem \ref{thm-ksc}.
\end{remark}

\begin{remark}
When $f$ is only a dominant self-map, KSC has been verified at least for the following cases.
\begin{enumerate}
\item $f$ is certain self-map of $\mathbb{P}^n$ (cf. \cite[Theorem 2]{KS14}).
\item $f$ is a non-isomorphic finite surjective endomorphism of a smooth affine surface of non-negative Kodaira dimension (cf. \cite{JSXZ21}).
\item $f$ is birational and $X$ is a smooth projective variety with Kodaira dimension $0$ and irregularity $q(X)\ge \dim X-1$ (cf. \cite{CLO22}).
\item $X$ is a smooth projective variety with Kodaira dimension $0$ and $q(X)=\dim X$ (cf. \cite{CLO22}). 
\item A slightly weaker version of KSC (for generic orbits) is proved when $f$ is $1$-cohomologically hyperbolic, 
i.e., the first dynamical degree is larger than all the other dynamical degrees (cf. \cite{MW22}).
\end{enumerate}
\end{remark}

We now briefly explain the EMMP used in proving Theorem \ref{thm-ksc} (3), (6) and (7).

When there is an int-amplified endomorphism, we are able to reduce KSC to the highly geometrically restrictive 
Case TIR by the following theorem (cf. \cite[Theorem 1.7]{MZ22}).
The reduction is based on the effectivity of $-K_X$ (cf.~\cite[Theorem 1.5]{MZ22}), the equivariant 
Chow reduction (cf.~\cite[Proposition 1.6]{MZ22}), and a carefully chosen EMMP (cf.~\cite[Propostion 9.2]{MZ22}).

\begin{theorem}\label{main-thm-tir}
Let $X$ be a normal projective variety having only $\Q$-factorial Kawamata log terminal (klt)  singularities and one int-amplified endomorphism.
Then we have:
\begin{itemize}
\item[(1)] If $K_X$ is pseudo-effective, then KSC holds for any surjective endomorphism of $X$.
\item[(2)] Suppose that KSC holds for Case TIR
(for those $f|_{X_i} : X_i \to X_i$ appearing in any EMMP starting from $X$) to be defined below.
Then KSC holds for any (not necessarily int-amplified) surjective endomorphism $f$ of $X$.
\end{itemize}
\end{theorem}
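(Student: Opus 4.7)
The plan is to reduce both parts to the structural EMMP supplied by the int-amplified endomorphism. For part (1), Theorem \ref{main-thm-GMMP}(4) immediately forces $X$ itself to be $Q$-abelian, and KSC for every surjective endomorphism of a $Q$-abelian variety is known by Theorem \ref{thm-ksc}(5); so (1) is proved without further input. For part (2), assume $K_X$ is not pseudo-effective (otherwise we are back in (1)). Given any surjective endomorphism $f$ of $X$, since the EMMP of Theorem \ref{main-thm-GMMP} is equivariant under a finite-index submonoid $G \le \SEnd(X)$, after replacing $f$ by a suitable iterate we may arrange $f \in G$; this iteration does not affect the validity of KSC for $f$. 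Thus $f$ descends to surjective endomorphisms $f_i$ on each stage
\[
X = X_0 \dashrightarrow X_1 \dashrightarrow \cdots \dashrightarrow X_r = Y,
\]
with the last step $X_r \to Y$ a Fano contraction onto a $Q$-abelian $Y$.

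Fix $x \in X(\overline{K})$ with Zariski dense orbit; the goal is $\alpha_f(x) = \delta_f$. The plan is to track the orbit down the tower. At each birational step $X_i \dashrightarrow X_{i+1}$, the invariants $\delta_{f_i}$ and $\alpha_{f_i}(x_i)$ agree with those on $X_{i+1}$ on the open locus where the modification is an isomorphism, which contains infinitely many iterates of $x$ by Zariski density. If the projection of $O_f(x)$ to $Y$ fails to be Zariski dense, then its closure is a proper $f_Y$-invariant subvariety of $Y$, and KSC on the $Q$-abelian $Y$ yields $\alpha_{f_Y}(\pi(x)) < \delta_{f_Y}$; combined with a canonical bundle formula argument along the Fano contraction and the effectivity of $-K_X$ (in the spirit of \cite[Theorem 1.5]{MZ22}), this transfers back to the desired inequality on $X$. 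If the projection is Zariski dense in $Y$, one invokes the equivariant Chow reduction (\cite[Proposition 1.6]{MZ22}) together with a carefully chosen EMMP (\cite[Proposition 9.2]{MZ22}) to reduce the remaining situation to Case TIR on some $X_i$, for which KSC is granted by hypothesis.

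The hard part is precisely the passage to Case TIR: the EMMP must be chosen so that at every intermediate $X_i$ the birational modifications preserve the genericity of the orbit, and the Chow reduction has to be performed $f$-equivariantly without collapsing density of the orbit on the fibres of the Fano contraction. This is exactly what the ``carefully chosen'' EMMP achieves, and it is what forces the residual configuration to be Case TIR rather than something more general. Once this geometric control is in place, the Case-TIR assumption closes the induction, and KSC follows for the arbitrary surjective endomorphism $f$ of $X$.
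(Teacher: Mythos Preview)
Your overall architecture (use the EMMP of Theorem~\ref{main-thm-GMMP}, iterate $f$ into $G$, descend to the $Q$-abelian base, and invoke Case~TIR for the residual situation) matches the paper's strategy, and part~(1) is fine. But the core of your argument for part~(2) rests on the wrong dichotomy, and one branch is both vacuous and logically backwards.

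First, the case ``projection of $O_f(x)$ to $Y$ fails to be Zariski dense'' never occurs: by Theorem~\ref{main-thm-GMMP}(5) the composite $X\to Y$ is a surjective \emph{morphism}, so the image of a Zariski-dense orbit is automatically Zariski dense in $Y$. Second, even if that case did occur, KSC on $Y$ does \emph{not} yield $\alpha_{f_Y}(\pi(x))<\delta_{f_Y}$ from non-density; that is the converse of KSC, which is false in general. And you say this ``transfers back to the desired inequality on $X$'', but what you need on $X$ is the \emph{equality} $\alpha_f(x)=\delta_f$, so it is unclear what is being transferred.

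The dichotomy that actually drives the proof is the comparison of dynamical degrees along the tower, not density of the projected orbit. At a Fano contraction $X_i\to X_{i+1}$, if $\delta_{f_i}=\delta_{f_{i+1}}$ then one simply descends: the image orbit is dense in $X_{i+1}$, and $\alpha_{f_i}(x_i)\ge\alpha_{f_{i+1}}(x_{i+1})$, so KSC on $X_{i+1}$ gives KSC on $X_i$; iterating, one lands on the $Q$-abelian $Y$ where KSC is known. The genuinely hard case is $\delta_{f_i}>\delta_{f_{i+1}}$ (this is exactly condition~(A4) in Case~TIR). It is here that the three ingredients the paper names---effectivity of $-K_X$, the equivariant Chow reduction, and the carefully chosen EMMP---are used to force the remaining conditions (A1)--(A3), so that one is literally in Case~TIR on some $X_i$, where KSC holds by hypothesis. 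Your sketch invokes these same ingredients but attaches them to the wrong branch of the wrong case split; reorganising the argument around the $\delta_{f_i}$ versus $\delta_{f_{i+1}}$ comparison is what is needed.
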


{\bf Case TIR}$_n$ (Totally Invariant Ramification case).
Let $X$ be a normal projective variety of dimension $n \ge 1$, which has only $\Q$-factorial 
Kawamata log terminal (klt) singularities and admits one int-amplified endomorphism.
Let $f:X\to X$ be an arbitrary surjective endomorphism.
Moreover, we impose the following conditions.
\begin{itemize}
\item[(A1)]
The anti-Kodaira dimension $\kappa(X,-K_X)=0$; $-K_X$ is nef, whose class is extremal in both the {\it nef cone} 
$\Nef(X)$ and the {\it pseudo-effective divisors cone} $\PE^1(X)$.
\item[(A2)]
$f^*D = \delta_f D$ for some prime divisor $D\sim_{\Q} -K_X$.
\item[(A3)]
The ramification divisor of $f$ satisfies $\Supp \, R_f = D$.
\item[(A4)]
There is an $f$-equivariant Fano contraction $\pi:X\to Y$ with $\delta_f>\delta_{f|_Y}$ ($\ge 1$).
\end{itemize}

Indeed, we conjecture that Case TIR will never happen, which is confirmed for rationally connected smooth varieties (see \cite{MZ22} and \cite{MY22} for threefolds
and $n$-folds respectively) and is further evidenced by \cite[Theorem 6.6]{MMSZ23}, i.e., the following theorem, while the latter in turn, 
implies Theorem \ref{thm-ksc} (3), (6), and (7).
For the surface case, we may remove the assumption on singularities in Case TIR (cf.~\cite[Theorem 5.2]{MZ22}), see also Theorem \ref{thm-surf}.

\begin{theorem}
Suppose that the $X$ in Case TIR has only ($\Q$-factorial) terminal singularities.
Then $\dim(X)$ $\ge \dim(Y)+3\ge 4$.
\end{theorem}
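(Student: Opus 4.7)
The plan is to argue by contradiction: suppose $\dim X \le \dim Y + 2$. First I dispose of the boundary case $\dim Y = 0$: then $\pi$ is the constant map, $\rho(X)=1$, and since $\pi$ is a Fano contraction of a $K_X$-negative extremal ray, $-K_X$ is ample, so $\kappa(X,-K_X) = \dim X \ge 1$, contradicting~(A1). Thus $\dim Y \ge 1$, and the remaining task is to rule out $\dim F \in \{1,2\}$, where $F$ denotes a general fiber of $\pi$.

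From (A1)--(A3) I first extract the equality of integer Weil divisors $R_f = (\delta_f - 1)D$. Indeed, $K_X \sim_{\Q} -D$ combined with $f^*K_X = K_X - R_f$ and $f^*D = \delta_f D$ yields $R_f \sim_{\Q} (\delta_f - 1)D$; since $\kappa(X,-K_X)=0$ forces $D$ to be the unique effective divisor in its $\Q$-linear equivalence class up to positive scalar, and $\Supp R_f = D$ is prime by (A3), this $\Q$-equivalence is an equality, and in particular $\delta_f \in \Z_{\ge 2}$. After iterating $f$ so that $f|_Y(y_0) = y_0$ for a general $y_0 \in Y$, set $F := \pi^{-1}(y_0)$. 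Terminality of $X$ makes $\Sing(X)$ of codimension $\ge 3$, so $F$ avoids $\Sing(X)$ and is a smooth Fano variety with $D|_F \sim_{\Q} -K_F$ ample. The restriction $f|_F$ satisfies $(f|_F)^*(D|_F) = \delta_f D|_F$, hence is a polarized endomorphism of $F$ of polarization degree $\delta_f$, and $R_{f|_F} = R_f|_F = (\delta_f - 1)D|_F$, so $\Supp R_{f|_F} = \Supp D|_F$.

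The hard part is to convert this rigid fiberwise picture into a global contradiction. If $\dim F = 1$, then $F \cong \mathbb{P}^1$ and Riemann--Hurwitz forces $f|_F$ to be totally ramified at exactly two distinct points, so conjugate to $z \mapsto z^{\delta_f}$; consequently $\pi|_D \colon D \to Y$ is a generically \'etale degree-$2$ cover and adjunction gives $K_D \sim_{\Q} 0$, making $D$ a Calabi--Yau variety. If $\dim F = 2$, then $F$ is a smooth del Pezzo carrying a polarized endomorphism whose ramification is supported on an anticanonical divisor; by the classification of polarized endomorphisms of smooth del Pezzo surfaces one expects $F$ to be toric and $f|_F$ a monomial map, so $D|_F$ is the toric boundary, a sum of at least three prime components transitively permuted by the monodromy of $\pi$ over $Y^{\circ} := Y \setminus \mathrm{Disc}(\pi)$. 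In either case the idea is to combine the equivariant triangle $(f, f|_Y, f|_D)$ with the extremality of $[D]$ in $\Nef(X) \cap \PE^1(X)$ (which diagonalizes $f^*|_{\N^1(X)}$ along $\pi^*\N^1(Y) \oplus \R[D]$), the canonical bundle formula for $\pi$, and condition~(A4), and to conclude either $\delta_{f|_Y} = \delta_f$ (contradicting~(A4)) or a global product decomposition of $\pi$ that conflicts with the irreducibility of $D$ or with the integrality $R_f = (\delta_f - 1)D$. The principal obstacle is the $\dim F = 2$ case, where integrating the fiberwise toric structure over $Y$ requires controlling the monodromy of $\pi_1(Y^{\circ})$ on the toric boundary components; it is at this step that one expects to need terminality of $X$ together with singularity bounds on $\mathrm{Disc}(\pi) \subset Y$ in an essential way.
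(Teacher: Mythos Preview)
The paper is a survey and does not supply its own proof of this statement; it simply quotes the result from \cite[Theorem~6.6]{MMSZ23}. So there is no in-paper argument to compare your attempt against line by line.

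Assessed on its own merits, your proposal handles the preliminary reductions correctly. The case $\dim Y=0$ is dispatched cleanly (a Fano contraction to a point forces $\rho(X)=1$ and $-K_X$ ample, contradicting $\kappa(X,-K_X)=0$). The identity $R_f=(\delta_f-1)D$ with $\delta_f\in\Z_{\ge 2}$ is derived properly from (A1)--(A3), and the observation that terminal singularities have codimension $\ge 3$ does make the general fiber $F$ smooth when $\dim F\le 2$. Note that this is already where terminality enters, not the monodromy step you flag at the end.

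The proposal has a genuine gap at the decisive point, however: you never actually derive a contradiction in either of the cases $\dim F=1$ or $\dim F=2$. For $\dim F=1$ you record that $D\to Y$ is a degree-$2$ cover with $K_D\sim_{\Q}0$ and then stop. For $\dim F=2$ you say ``one expects $F$ to be toric'' and sketch a strategy involving monodromy and the canonical bundle formula, but you explicitly call this ``the principal obstacle'' and leave it unresolved. The closing paragraph (``the idea is to combine \ldots\ and to conclude either $\delta_{f|_Y}=\delta_f$ or a global product decomposition'') is a list of ingredients, not an argument. There is also a technical point you glide over: iterating $f$ so that $f|_Y$ fixes a \emph{general} point $y_0\in Y$ presupposes that $f|_Y$ has Zariski-dense periodic points, which is not automatic for the arbitrary (possibly non-int-amplified) $f$ appearing in Case~TIR. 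The entire content of the theorem lies in the two fiber-dimension cases you leave open; as written this is a plan identifying where the difficulty sits, not a proof.
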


For Theorem \ref{thm-ksc} (8), the essential case is when the Kodaira dimension $\kappa(X)=-\infty$, our $f$ is an automorphism of positive entropy, 
and the Albanese map $\alb:X\to B$ is a surjective morphism to an elliptic curve $B$.
Chen, Lin and Oguiso applied the classification result of Lesieutre \cite[Theorem 1.7]{Les18} to run smooth EMMP.
This way, one can further narrow down to the case where $X$ admits an $f$-equivariant conic bundle structure with the first dynamical degree preserved, 
thus reducing to the surface case, where KSC is known, see Theorem \ref{thm-ksc} (3).

\section{Application to Zariski Dense Orbit conjecture}

In this section, we work over an algebraically closed field $\bk$ of characteristic $0$.
We consider the following conjecture.
Note that Conjecture \ref{conj:ZD} (\ref{conj:ZD:extend}) below is invariant under birational conjugation
which is stronger than the long-standing Conjecture \ref{conj:ZD} (\ref{conj:ZD:classical}), called the Zariski dense orbit conjecture.
In fact, they are equivalent
modulo the Dynamical Mordell-Lang Conjecture, see \cite[Proposition 2.6]{Xie22}.
Recall that a rational function $\psi \in \bk(X)$ is \emph{$f$-invariant}
if $f^*(\psi) := \psi \circ f = \psi$.
Denote by $\bk(X)^f$ the field of $f$-invariant rational functions on $X$.
We have $\bk \subseteq \bk(X)^f \subseteq \bk(X)$.
Denote by $X(\bk)_f$ the set of points whose $f$-orbit $O_f(x)$ is well-defined.

\begin{conjecture}\label{conj:ZD} 
	Let $X$ be an (irreducible) projective variety over $\bk$
	and $f : X \dashrightarrow X$ a dominant rational self-map such that $\bk(X)^f = \bk$.
	Then:
	\begin{enumerate}
		\item \label{conj:ZD:classical}
		      There is a point $x \in X(\bk)_f$ such that its $f$-orbit is Zariski-dense in $X$.
		\item \label{conj:ZD:extend}
		      For every Zariski-dense open subset $U$ of $X$,
		      there exists a point $x\in X(\bk)_f$ whose orbit $O_f(x)$ is contained in $U$ and Zariski-dense in $X$.
		      In particular, the set 
	$$\{ x \in X(k)_f : O_f(x) \text{ is Zariski-dense in } X \}$$
is Zariski dense in $X$. 
	\end{enumerate}
\end{conjecture}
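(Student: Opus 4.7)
The plan is to attack the two parts in sequence: first establish part (\ref{conj:ZD:classical}), and then bootstrap to part (\ref{conj:ZD:extend}) using the Dynamical Mordell--Lang conjecture (DML), exactly as in the equivalence noted in the surrounding text. Concretely, once (\ref{conj:ZD:classical}) is granted and a dense open $U \subseteq X$ is fixed, for any point $x_{0}$ with Zariski-dense orbit DML applied to $f$ and the closed complement $X \setminus U$ forces the return set $\{n : f^{n}(x_{0}) \in X \setminus U\}$ to be a finite union of arithmetic progressions; this cannot equal $\BNN$ since $X \setminus U \ne X$, so some tail $O_{f}(f^{N}(x_{0}))$ already lies in $U$ and remains Zariski-dense in $X$. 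The ``in particular'' density statement is obtained by iterating the argument on products (applying DML to a diagonal-plus-shifts construction), which is the refinement used in the cited reference.

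For (\ref{conj:ZD:classical}) itself, my first step would be to reduce to the case of a surjective \emph{endomorphism}. An equivariant resolution of indeterminacy followed by the equivariant Chow reduction used around Theorem \ref{main-thm-tir} lifts $f$ to an endomorphism of a suitable modification, with $\bk(X)^{f} = \bk$ preserved along generically finite equivariant steps. Once $f \in \SEnd(X)$, I would split on dynamical type. If $f$ is int-amplified, Theorem \ref{main-thm-GMMP} (or Theorem \ref{thm-yoshikawa}) reduces the geometry to a Fano-type fibration over a $Q$-abelian base $Y$; the hypothesis $\bk(X)^{f}=\bk$ then descends to $\bk(Y)^{f_{Y}} = \bk$, which after passing to an abelian cover $A \to Y$ translates into the absence of any invariant quotient abelian subvariety for the lifted endomorphism of $A$. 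A classical argument on $\mathrm{End}(A)$ produces a point $a \in A$ with Zariski-dense orbit. Combining $a$ with a suitably generic periodic point on a Fano-type fibre (provided by Fakhruddin's Theorem \ref{FakThm}) and lifting through the EMMP yields a point of $X$ with Zariski-dense orbit. For low-dimensional or klt Calabi--Yau-type situations one may instead invoke Theorems \ref{thm-emmp-sur} and \ref{main-thm-qa-a} directly.

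The principal obstacle lies in the cases that are not reachable by the int-amplified EMMP, most notably automorphisms of positive entropy on varieties of intermediate Kodaira dimension, where no $f$-equivariant Fano contraction or $Q$-abelian reduction exists (as illustrated by the Lesieutre-type example $S \times E$ cited earlier in the paper). There the structural reduction collapses and one must substitute more delicate tools: the equivariant Iitaka fibration to pin down the ``slow'' part of the dynamics, together with $p$-adic analytic uniformisation of orbits, Amerik-type specialisation arguments, or a direct analysis of the Zariski-closed $f$-invariant subvarieties whose emptiness is encoded in $\bk(X)^{f}=\bk$. Even granting a complete proof of (\ref{conj:ZD:classical}), the passage to (\ref{conj:ZD:extend}) is contingent on DML, itself a central open problem, so any honest proof proposal must either adopt DML as a working hypothesis or produce an independent argument for the refined density statement.
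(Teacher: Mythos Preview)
The statement you are trying to prove is labeled \emph{Conjecture} in the paper, and the paper offers no proof of it: Conjecture~\ref{conj:ZD} is the Zariski Dense Orbit conjecture, a well-known open problem. The surrounding section only records partial results (surfaces, abelian varieties, split endomorphisms of $(\mathbb{P}^1)^n$, etc.) and the adelic strengthening. So there is no ``paper's own proof'' to compare against; any complete argument would be a major new theorem.

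Beyond that, your outline contains a genuine error at the very first reduction step. You propose to pass from a dominant rational self-map $f:X\dashrightarrow X$ to a surjective endomorphism via ``equivariant resolution of indeterminacy followed by the equivariant Chow reduction.'' This does not work: resolving the indeterminacy of $f$ produces a correspondence $X\xleftarrow{\pi}\widetilde{X}\xrightarrow{\tilde f} X$, not an endomorphism of any single model, and there is in general no birational model on which $f$ becomes a morphism (already for a generic birational self-map of $\mathbb{P}^2$). The Chow reduction cited near Theorem~\ref{main-thm-tir} applies to genuine endomorphisms, not to rational maps. So the entire EMMP machinery you invoke afterwards (Theorems~\ref{main-thm-GMMP}, \ref{thm-yoshikawa}, \ref{main-thm-qa-a}) is simply unavailable in the generality of the conjecture. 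You are right that the passage from part~(\ref{conj:ZD:classical}) to part~(\ref{conj:ZD:extend}) is conditional on DML, and that the non-int-amplified/positive-entropy cases resist the structural reductions; but even restricting to surjective endomorphisms, your sketch in the int-amplified case is incomplete (producing a dense orbit on the $Q$-abelian base and a periodic point on a fibre does not by itself yield a dense orbit on the total space).
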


We can extend Conjecture \ref{conj:ZD} (Zariski-topology version)
to a stronger Conjecture \ref{conj:AZD} below (adelic-topology version).
The adelic topology on $X(\bk)$ was proposed by Xie \cite[\S 3]{Xie22} with the following properties.

\begin{enumerate}
	\item It is stronger than the Zariski topology.
	\item It is ${\mathsf{T}}_1$,
		      i.e., for any two distinct points $x, y \in X(\bk)$
		      there are adelic open subsets $U, V$ of $X(\bk)$
		      such that $x \in U, y \notin U$ and $y\in V, x \notin V$.
	\item Morphisms between algebraic varieties over $\bk$
		      are continuous for the adelic topology.
		\item Flat morphisms are open with respect to the adelic topology.
		\item \label{ppty-of-adelic-topo:irreducible}
		      The irreducible components of $X(\bk)$ in the Zariski topology
		      are the irreducible components of $X(\bk)$ in the adelic topology.
		\item Let $K$ be any subfield of $\bk$ which is finitely generated over $\mathbb{Q}$
		      and such that $X$ is defined over $K$ and $\overline{K} = \bk$.
		      Then the action
		      \[
			      \Gal(\bk/K)\times X(\bk)\to X(\bk)
		      \]
		      is continuous with respect to the adelic topology.
\end{enumerate}
When $X$ is irreducible, (\ref{ppty-of-adelic-topo:irreducible}) implies that the intersection of finitely many non-empty adelic open subsets of $X(\bk)$ is non-empty.
So, if $\dim X\geq 1$, the adelic topology is not Hausdorff.
In general, the adelic topology is strictly stronger than the Zariski topology.

The adelic version of the Zariski-dense orbit conjecture was proposed in \cite{Xie22}.

\begin{conjecture}\label{conj:AZD}
	Assume that the transcendence degree of $\bk$ over $\mathbb{Q}$ is finite.
	Let $X$ be an irreducible variety over $\bk$ and
	$f : X \dashrightarrow X$ a dominant rational map.
	If $\bk(X)^f= \bk$, then there exists a non-empty adelic open subset $A \subseteq X(\bk)$
	such that the orbit of every point $x \in A\cap X(\bk)_f$ is Zariski-dense in $X$.
\end{conjecture}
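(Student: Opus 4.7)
The plan is to attack Conjecture \ref{conj:AZD} by combining the Equivariant Minimal Model Program for int-amplified endomorphisms with the adelic topology techniques of Xie and the $p$-adic linearisation technology (Bell--Ghioca--Tucker, Xie and collaborators) that has driven the recent progress on the classical Zariski dense orbit conjecture. As a first reduction, I would try to replace $(X,f)$ by an $f$-equivariant modification in which $X$ is $\Q$-factorial projective and $f$ is a surjective endomorphism. This uses a resolution of indeterminacy for the iterates of $f$ together with the birational compatibility of the adelic topology (openness of flat morphisms, and the coincidence of irreducible components with the Zariski ones): a non-empty adelic open set on a birational model pulls back to a non-empty adelic open set, so the conjecture is birationally invariant in the right sense.

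After this reduction I would split into two principal cases. In the \emph{int-amplified case}, Theorem \ref{main-thm-GMMP} yields an $f$-equivariant tower $X = X_{0} \dashrightarrow \cdots \dashrightarrow X_{r} = Y$ ending at a $Q$-abelian variety $Y$, each intermediate step being a divisorial contraction, a flip, or a Fano contraction. One first establishes the conjecture for $(Y, f|_{Y})$: the $Q$-abelian case reduces, via the quasi-\'etale cover of an abelian variety, to the group-theoretic situation where $p$-adic linearisation of Ghioca--Tucker and their coauthors is applicable. One then propagates adelic open sets up the tower. The crucial upward step is the Fano contraction $X_{r-1} \to Y$: its generic fibre is Fano and $f$-periodic, so Fakhruddin's Theorem \ref{FakThm} supplies a Zariski-dense set of periodic points in a very general fibre, and a fibrewise $\Gal(\bk/K_{0})$-orbit construction (using property (6) of the adelic topology, with $K_{0}$ a finitely generated subfield of definition of $(X,f)$) assembles these into a genuine adelic open set. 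The birational steps, being equivariant modifications, are handled by the birational invariance established in the first reduction.

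In the \emph{not-necessarily int-amplified case}, I would first appeal to the equivariant Iitaka fibration of \cite{NZ09} to reduce to $\kappa(X) = -\infty$ or to a trivial Iitaka base, and then invoke Theorem \ref{main-thm-qa-a} (quasi-amplified EMMP on klt Calabi--Yau varieties) together with the surface and threefold structural results of the earlier sections to reduce to a short list of structured base cases (rationally connected Fano type, klt Calabi--Yau type, or $Q$-abelian-fibred). In each such base case, a Zariski-dense set of periodic points is available and the same Galois-adelic construction applies.

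\textbf{Main obstacle.} The crux of the difficulty lies outside the reach of EMMP: positive-entropy automorphisms of varieties of Kodaira dimension zero beyond the hyperk\"ahler range of Theorem \ref{main-thm-hk}, and genuinely rational self-maps that are not birationally conjugate to any (int-)amplified morphism. In these settings the only existing route to even a single Zariski-dense orbit is $p$-adic linearisation at one carefully chosen place, and upgrading this to a \emph{non-empty adelic} open set requires coordinating the linearisable loci across all completions simultaneously --- no uniform mechanism for this is currently available. A full proof would likely demand either a uniform $p$-adic linearisation theorem compatible with the adelic structure, or a new structural dichotomy (in the spirit of the Dynamical Mordell--Lang Conjecture) between adelic orbit closures and $f$-invariant subvarieties of $X$.
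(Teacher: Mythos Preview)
The statement you are attempting to prove is \emph{Conjecture}~\ref{conj:AZD}, not a theorem: the paper does not supply a proof, nor does it claim one. It is recorded as an open problem proposed by Xie, and the paper only establishes the very special case of surjective endomorphisms of projective surfaces (Theorem~\ref{thm:AZD}). There is therefore no ``paper's own proof'' to compare against; any complete argument here would be a new result.

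That said, your proposal contains a genuine gap already at the first reduction. You write that you would ``replace $(X,f)$ by an $f$-equivariant modification in which $X$ is $\Q$-factorial projective and $f$ is a surjective endomorphism'' via resolution of indeterminacy. This step does not exist in general: resolving the indeterminacy of $f$ produces a morphism $\widetilde{X}\to X$ from a blowup, not a self-morphism of any single model, and there is no mechanism to convert an arbitrary dominant rational self-map into a surjective endomorphism on a birational model. This is precisely why the conjecture is stated for rational self-maps and why the EMMP machinery of the paper (Theorems~\ref{main-thm-GMMP} and~\ref{main-thm-qa-a}) is formulated only for genuine surjective endomorphisms. Without that reduction, the entire int-amplified EMMP branch of your strategy is inaccessible for the general $f$ in the conjecture.

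You are candid about the remaining obstacle in your final paragraph, and that assessment is accurate: even restricting to endomorphisms, the passage from a single Zariski-dense orbit (or from Fakhruddin's density of periodic points) to a non-empty \emph{adelic} open set of dense orbits is not furnished by anything in the paper or in the cited literature outside the low-dimensional and abelian cases. Your outline is a reasonable sketch of how one might hope to attack the problem, but it is a research programme rather than a proof, and the first reduction step as written is incorrect.
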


\begin{definition}
	Let $X$ be an (irreducible) projective variety over $\bk$
	and $f : X \dashrightarrow X$ a dominant rational map.
	We say that a pair $(X,f)$ satisfies
	\begin{enumerate}
		\item \emph{ZD-property}, if Conjecture \ref{conj:ZD}(\ref{conj:ZD:classical}) holds true;
		\item \emph{Strong ZD-property}, if Conjecture \ref{conj:ZD}(\ref{conj:ZD:extend}) holds true;
		\item \emph{AZD-property}, if Conjecture \ref{conj:AZD} holds true; and
		\item \emph{SAZD-property}, if there is a non-empty adelic open subset $A$ of $X(\bk)$
		      such that for every point $x\in A\cap X(\bk)_f$, its orbit $O_f(x)$ is Zariski-dense in $X$.
	\end{enumerate}
\end{definition}

\begin{remark}\label{rem:AZD_to_ZD}
	Conjecture \ref{conj:AZD} implies Conjecture \ref{conj:ZD}.
	Precisely, we have:
	\begin{enumerate}
		\item SAZD-property implies AZD-property.
		\item Conjecture \ref{conj:AZD} (adelic-topology version) is stronger
		      than the classical Conjecture \ref{conj:ZD} (Zariski-topology version).
		      Indeed, even the hypothesis on $\bk$ in Conjecture \ref{conj:AZD} does not cause any problem.
		      To be precise,
		      for every pair $(X,f)$ over $\bk$, there exists an algebraically closed subfield $K$
		      of $\bk$ whose transcendence degree over $\mathbb{Q}$ is finite
		      and such that $(X,f)$ is defined over $K$,
		      i.e., there exists a pair $(X_K,f_K)$ such that $(X,f)$ is its base change by $\bk$.
		      By \cite[Corollary 3.29]{Xie22},
		      if $(X_K,f_K)$ satisfies AZD-property, then $(X,f)$ satisfies strong ZD-property.
	\end{enumerate}
\end{remark}

The EMMP (cf. Theorem \ref{thm-surf}) can be applied to prove Conjecture \ref{conj:AZD} 
(and hence Conjecture \ref{conj:ZD}) for surjective endomorphisms of (possibly singular) projective surfaces, extending the smooth case in \cite{Xie22}.

\begin{theorem}\label{thm:AZD}
	Let $f : X \to X$ be a surjective endomorphism of a projective surface $X$ defined over the algebraically closed field $\bk$.
	Assume that $\bk$ has finite transcendence degree over $\mathbb{Q}$ (see also Remark \ref{rem:AZD_to_ZD} (2)).
	Then Conjecture \ref{conj:AZD} holds for $(X, f)$.
	Precisely, either $\bk(X)^f \neq \bk$,
	or there is a non-empty adelic open subset $A \subseteq X(\bk)$ such that the forward orbit $O_f(x)$ of every point $x \in A$ is Zariski-dense in $X$.
\end{theorem}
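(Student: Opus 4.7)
The plan is to reduce to the smooth surface case, which is due to Xie, by running the equivariant MMP furnished by Theorem \ref{thm-surf}. A preliminary observation is that we may freely replace $f$ by a positive iterate without changing either alternative of the statement: the inclusion $\bk(X)^f\subseteq\bk(X)^{f^n}$ goes in only one direction, but taking the elementary symmetric functions of the finite $f$-orbit $\{\phi,\phi\circ f,\dots,\phi\circ f^{n-1}\}$ of any non-constant $f^n$-invariant function $\phi$ produces a non-constant $f$-invariant function (otherwise all these iterates would be algebraic, hence constant, over $\bk$). Hence $\bk(X)^f=\bk$ if and only if $\bk(X)^{f^n}=\bk$ for every $n$, so we may assume from the outset that $f$ fits into one of the nine outcomes of Theorem \ref{thm-surf}. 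We then assume $\bk(X)^f=\bk$ and aim to produce an adelic open subset $A\subseteq X(\bk)$ every point of which has Zariski-dense $f$-orbit.

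We next dispense with Case~(\ref{thm:main:finite-ord}) of Theorem \ref{thm-surf}: if $f$ descends to a finite-order automorphism on a curve $Y$ of genus $\ge 1$, the same averaging trick applied to the pullback of any non-constant rational function on $Y$ yields a non-constant $f$-invariant function on $X$, contradicting $\bk(X)^f=\bk$; so this case cannot occur. For Cases (\ref{thm:main:qe}), (\ref{thm:main:smooth}), (\ref{thm:main:cover}), (\ref{thm:main:base-change}), (\ref{thm:main:proj-cone}) and (\ref{thm:main:prdt}), the structural description in Theorem \ref{thm-surf} furnishes an equivariant finite quasi-\'etale (Galois) cover or a birational morphism $\nu:V\to X$ from a smooth projective surface $V$ carrying a lifted endomorphism $\tilde f$ with $\bk(V)^{\tilde f}=\bk$. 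Xie's theorem for smooth projective surfaces then produces a non-empty adelic open subset $A_V\subseteq V(\bk)$ whose points all have Zariski-dense $\tilde f$-orbits, and combining the continuity of $\nu$ with the openness of finite flat morphisms in the adelic topology lets us descend $A_V$ to an adelic open subset of $X(\bk)$ with the desired property.

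The remaining Cases (\ref{thm:main:fano-type}) and (\ref{thm:main:fano}) are the subtlest: here $X$ is a rational surface with only klt or lc singularities. When $f$ (or a suitable descending endomorphism $f|_{X_r}$) is polarized or int-amplified on the Picard-number-one base $X_r$, one invokes AZD for polarized endomorphisms and for Mori Dream Spaces (see Theorem \ref{thm-ksc} (1)--(2)) to obtain SAZD on $X_r$, and then lifts through the equivariant contractions in the EMMP to $X$. Otherwise, one first takes an $f$-equivariant resolution of singularities (existing after further iteration, by running the smooth-surface equivariant MMP on the resolution) and applies Xie on that smooth model. The principal obstacle throughout is the adelic-topological transfer of the Zariski-dense orbit property through equivariant finite or birational morphisms; this is where the functorial properties of the adelic topology listed in the excerpt -- continuity, openness of flat morphisms, Galois compatibility, and the $\mathsf{T}_1$ and irreducibility axioms -- do the real work.
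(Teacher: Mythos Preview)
Your overall plan---run the case analysis of Theorem~\ref{thm-surf} and reduce each case to a smooth model where Xie's theorem applies---matches exactly what the paper indicates (and is carried out in \cite{JXZ23}). The reductions in Cases~(\ref{thm:main:qe}), (\ref{thm:main:smooth}), (\ref{thm:main:cover}), (\ref{thm:main:base-change}), (\ref{thm:main:proj-cone}) via an equivariant smooth cover or smooth birational model are essentially correct, and your disposal of Case~(\ref{thm:main:finite-ord}) is fine. (Minor slip: in Case~(\ref{thm:main:prdt}) the finite map goes \emph{from} $X_r$ \emph{to} $Y\times\mathbb{P}^1$, so you should pull the adelic open set back rather than push it forward; this still works because a finite surjection reflects Zariski density of orbits.)

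The genuine gap is in Cases~(\ref{thm:main:fano-type}) and (\ref{thm:main:fano}). First, your appeal to ``AZD for polarized endomorphisms and for Mori Dream Spaces (see Theorem~\ref{thm-ksc} (1)--(2))'' is a misidentification: Theorem~\ref{thm-ksc} concerns the Kawaguchi--Silverman conjecture, not the Zariski-dense-orbit conjecture, and the paper records no general AZD statement for polarized endomorphisms or Mori Dream Spaces. Second, your fallback---``take an $f$-equivariant resolution of singularities''---does not work: a non-isomorphic surjective endomorphism of a singular surface need not lift to an endomorphism of any resolution (the preimage of a singular point may contain smooth points, so exceptional configurations are not preserved), and iterating or running MMP on the resolution does not repair this. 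The paper flags precisely this obstruction in the remark following Theorem~\ref{thm:AZD}: even the normal Picard-number-$1$ case is ``highly non-trivial'' and is handled by a separate argument \cite[Theorem~1.12]{JXZ23}, which is the missing ingredient in your sketch. Without it, Cases~(\ref{thm:main:fano-type}) and (\ref{thm:main:fano}) remain open in your argument.
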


\begin{remark}
Even the case of normal projective surfaces of Picard number $1$ is highly non-trivial, see \cite[Thoerem 1.12]{JXZ23}
\end{remark}

We end this section by mentioning known results for Conjecture \ref{conj:ZD} (\ref{conj:ZD:classical}).
\begin{remark}
$ $
\begin{enumerate}
\item Conjecture \ref{conj:ZD} (\ref{conj:ZD:classical}) was proved by Amerik and Campana \cite[Theorem 4.1]{AC08}
	when the field $\bk$ is uncountable.
\item  Xie \cite[Appendix B, joint work with Thomas Tucker]{Xie22} proved Conjecture \ref{conj:AZD} and hence Conjecture \ref{conj:ZD} are proved
		      for 
		      $$f = (f_1,\cdots,f_n) \colon (\mathbb{P}^1)^n \to (\mathbb{P}^1)^n,$$
		      where the $f_i$'s are endomorphisms of $\mathbb{P}^1$;
		      see also \cite[Theorem 14.3.4.2]{BGT16},
		      when $f_i$'s are not post-critically finite,
		      and Medvedev and Scanlon \cite[Theorem 7.16]{MS14}
		      for an endomorphism 
		      $f \colon \mathbb{A}^n \to \mathbb{A}^n$
		      with $$f(x_1,\cdots,x_n) = (f_1(x_1),\cdots,f_n(x_n)),\, f_i(x_i) \in \bk[x_i].$$
		\item Xie \cite[Theorem 1.1]{Xie17} proved
		      Conjecture \ref{conj:ZD}(\ref{conj:ZD:classical}) for dominant polynomial endomorphism
		      $f \colon \mathbb{A}^2 \to \mathbb{A}^2$.
		\item Ghioca and Scanlon proved Conjecture \ref{conj:ZD}(\ref{conj:ZD:classical}) when $X$ is 
		an abelian variety and $f$ is a dominant self-map, see \cite[Theorem 1.2]{GS17}. The result is further generalized to the case when $X$ is semi-abelian by Ghioca and Satriano, see \cite[Theorem 1.1]{GS19};
		      Xie proved Conjecture \ref{conj:AZD} in this abelian variety case, 
		      see \cite[Theorem 1.14]{Xie22}.
		\item When $X$ is an algebraic surface and $f$ is a birational self-map, Xie \cite[Corollary 3.31]{Xie22} proved
		      Conjecture \ref{conj:AZD} (and hence Conjecture \ref{conj:ZD});
		      see also \cite[Theorem 1.3]{BGT14} when $X$ is quasi-projective
		      over $\overline{\mathbb{Q}}$ and $f$ is an automorphism.
		      \item
		      Conjecture \ref{conj:AZD} and hence Conjecture \ref{conj:ZD} have been proved
		      when $X$ is a \emph{smooth} projective surface
		      and $f$ is a surjective endomorphism in \cite[Theorem 1.15]{Xie22}.
\end{enumerate}
\end{remark}

\end{document}